\newtheorem{theorem}{Theorem}[section]
\newtheorem{lemma}[theorem]{Lemma}
\newtheorem{proposition}[theorem]{Proposition}
\newtheorem{remark}[theorem]{Remark}
\numberwithin{equation}{section}
\title{Global well-posedness and exponential decay of strong solution to the three-dimensional nonhomogeneous B\'enard system with density-dependent viscosity and vacuum}
\author{{Huanyuan Li$^{a}$}\thanks{Corresponding author. E-mail address: lhymaths@zzu.edu.cn.}, {Jieqiong Liu$^{a}$}
\medskip
\\
{\footnotesize $^a$School of Mathematics and Statistics, Zhengzhou University, Zhengzhou, China}}
\date{} 
\begin{document}
\maketitle

\begin{abstract}
\maketitle
In this paper, we are concerned with the three-dimensional nonhomogeneous B\'enard system with density-dependent viscosity in bounded domain. The global well-posedness of strong solution is established, provided that the initial total mass $\|\rho_0\|_{L^1}$ is suitably small. In particular, the initial velocity and temperature can be arbitrarily large. Moreover, the exponential decay of strong solution is also obtained. It is worth noting that the vacuum of initial density is allowed.

\bigskip
\noindent {\bf AMS Classification:} 35Q35; 76D03

\noindent {\bf Key words:} B\'enard system;  Global well-posedness; Exponential decay; Density-dependent viscosity; Vacuum
\end{abstract}

\section{Introduction and main result}
We consider the three-dimensional (3D in short) nonhomogeneous B\'enard system in a bounded smooth domain $\Omega \subset \mathbb{R}^3$, which reads as follows:
\begin{equation} \label{Benard}
\left\{
\begin{aligned}
&\partial_t\rho + \mathrm{div}(\rho \boldsymbol{u}) = 0, \\
&\partial_t(\rho \boldsymbol{u}) + \mathrm{div} (\rho \boldsymbol{u}\otimes \boldsymbol{u}) - \mathrm{div}(2\mu(\rho)\mathfrak{D}(\boldsymbol{u}))+\nabla P = \rho \theta \boldsymbol{e}_3,\\
& \partial_t (\rho \theta) + \mathrm{div} (\rho \boldsymbol{u} \theta)-\kappa \Delta \theta = \rho  \boldsymbol{u} \cdot \boldsymbol{e}_3, \\
&\mathrm{div} \boldsymbol{u}= 0,
\end{aligned}
\right.
\end{equation}
where the unknown functions $\rho,\boldsymbol{u}=(u^1, u^2, u^3), P$ and $\theta$ denote the fluid density, velocity, pressure and absolute temperature, respectively,  and $\mathfrak{D}(\boldsymbol{u})$ stands for the deformation tensor, which is expressed as
$$ \mathfrak{D}(\boldsymbol{u})= \frac{1}{2}\left(\nabla \boldsymbol{u} + (\nabla \boldsymbol{u})^T\right).$$
The viscosity coefficient $\mu=\mu(\rho)$ satisfies
 \begin{equation} \label{mu}
 \mu(\cdot) \in C^1 [0, +\infty), \quad  0<  \underline{\mu} \le \mu(\rho) \le \bar{\mu},
 \end{equation}
 where $\bar{\mu}$ and $ \underline{\mu}$ are fixed constants. And the constant $\kappa>0$ is the heat conducting coefficient. $\boldsymbol{e}_3=(0,0,1)^T$ denotes the vertical unit vector. The forcing term $\rho\theta\boldsymbol{e}_3$ in the momentum equation $\eqref{Benard}_2$ describes the action of the buoyancy force on fluid motion, and $\rho\boldsymbol{u}\cdot\boldsymbol{e}_3$ models the Rayleigh-B\'enard convection in a heated inviscid fluid.

In this paper, we study the strong solutions $(\rho, \boldsymbol{u}, \theta)$ to the initial and boundary value problem for \eqref{Benard} with the initial condition
\begin{equation} \label{initial}
(\rho, \rho\boldsymbol{u}, \rho\theta)(x, 0) = (\rho_0, \rho_0\boldsymbol{u}_0, \rho_0\theta_0)(x), \quad \quad x \in \Omega,
\end{equation}
and the boundary condition
\begin{equation} \label{boundary}
\boldsymbol{u} = 0, \quad \theta =0, \quad x \in \partial \Omega,~~ t>0.
\end{equation}

The B\'enard system describes the Rayleigh-B\'enard convective motion in a
heated inviscid incompressible fluid under thermal effects (see \cite{Majda}). Due to its important physical background for various fluid models, some significant progresses of the mathematical analysis have been obtained by many people. When the density $\rho$ is a constant, that is, the fluid is homogeneous, there are a lot of results on the global well-posedness of solutions to the B\'enard system. Zhang \cite{ZhangQ} studied the global regularity of classical solution for a $2\frac{1}{2}$ model with vanishing thermal diffusivity or vanishing viscous dissipation. Subsequently, Zhang-Fan-Li \cite{ZhangR} established the global well-posedness of the classical solution for the two-dimensional problem with zero dissipation or zero thermal diffusivity. And very recently, Ye \cite{Ye} investigated global existence of smooth solutions to the 2D B\'enard equations with critical dissipation. For other studies of homogeneous B\'enard system, we refer to \cite{Galdi, LiT, Robinowitz, Wu} and references therein.

Let us come back to the system \eqref{Benard}. When the viscosity is a positive constant, Zhong \cite{Zhong2} showed the global existence and uniqueness of strong solutions to the two-dimensional B\'enard system for arbitrarily large initial data in a bounded domain. Later on, Li \cite{Li} proved the global existence and uniqueness of strong solution to the two-dimensional Cauchy problem for arbitrarily large initial data in $\mathbb{R}^2$ with positive far field density. For the 3D case, Zhong \cite{Zhong3} obtained the global existence and uniqueness of strong solution to the Cauchy problem for suitable small initial data. When the viscosity is a function of density, Liu and Li \cite{LiuM} established the global well-posedness to the 3D Cauchy problem of strong solution provided that $\|\nabla \boldsymbol{u}_0\|_{L^2} + \|\nabla \theta_0\|_{L^2}$ is suitably small. In this paper, our purpose is to investigate the global regularity of strong solutions to the B\'enard system with density-dependent viscosity provided that the initial total mass is suitably small, but the initial velocity and temperature can be arbitrarily large. 

Before stating our main result, we first explain the notations and conventions used throughout this paper. For $p \in [1, \infty]$ and integer $k \in \mathbb{N}_+$, we use $L^p = L^p(\Omega)$ and $W^{k,p}= W^{k,p}(\Omega)$ to denote the standard Lebesgue and Sobolev spaces, respectively. When $p=2,$ we use $H^k=W^{k,2}(\Omega)$. The space $H^{1}_{0,\sigma}$ stands for the closure in $H^1$ of the space $C^{\infty}_{0,\sigma} \triangleq \{\boldsymbol{\phi} \in C^{\infty}_0 | \mathrm{div} \boldsymbol{\phi} =0\},$ and for two $3 \times 3$ matrices $A=\left(A_{ij}\right)$ and $B=\left(B_{ij}\right)$, we denote by
$$ A:B = \sum_{i,j =1}^3  A_{ij}B_{ij}.$$

Our main result reads as follows.
\begin{theorem} \label{thm}
For $\bar{\rho}>0$ and $q>3$, assume that the initial data $(\rho_0 \ge 0, \boldsymbol{u}_0, \theta_0)$ satisfies the regularity condition
\begin{equation} \label{RC}
\begin{aligned}
& 0 \le \rho_0 \le \bar{\rho}, ~~\rho_0 \in H^1, ~~\nabla \mu(\rho_0) \in L^q,~~\boldsymbol{u}_0 \in H^1_0, ~~ \theta_0 \in H^1_0, ~~\mathrm{div}\boldsymbol{u}_0  = 0.\\
\end{aligned}
\end{equation}
Then there exists a small positive constant $\varepsilon$ depending on $\Omega, q, \underline{\mu}, \bar{\mu}, \kappa, \bar{\rho}$ and initial data, such that if
$$ m_0 = \int_\Omega \rho_0 dx < \varepsilon_0,$$
the problem \eqref{Benard}-\eqref{boundary} has a unique global strong solution $(\rho \ge 0, \boldsymbol{u}, \theta)$ such that for any $0<\tau<\infty$ and any $r \in (3, \min\{q,6\}),$ it holds that
\begin{equation} \label{regularity}
\left\{
\begin{aligned}
& \rho \in C([0, \infty); H^1) \cap L^{\infty}(0, \infty; H^1), ~~\rho_t \in C([0, \infty); L^{\frac{3}{2}}),\\
& \nabla \boldsymbol{u} \in L^{\infty}(0, \infty; H^1) \cap C([\tau, \infty); L^2) \cap L^2(\tau, \infty; W^{1,r}),\\
& \nabla \theta \in L^{\infty}(0, \infty; H^1) \cap C([\tau, \infty); L^2) \cap L^2(\tau, \infty; W^{1,r}),\\
& \nabla P \in L^{\infty}(0,\infty, L^2) \cap L^2(\tau, \infty; L^r),\\
& e^{\frac{\sigma t}{2}} \nabla \boldsymbol{u}, e^{\frac{\sigma t}{2}} \sqrt{\rho}\theta \in L^{2}(0, \infty; L^2), \\
& t\sqrt{\rho} \boldsymbol{u}_t, t\sqrt{\rho}\theta_t \in L^{\infty}(0, \infty; L^2), ~~t\nabla \boldsymbol{u}_t, t\nabla \theta_t \in L^2(0, \infty; L^2).
\end{aligned}
\right.
\end{equation}
where $\sigma \triangleq \min\{\frac{\underline{\mu}}{2\bar{\rho}d^2}, \frac{\kappa}{2\bar{\rho}d^2}\}$ with $d$ being the diameter of the domain $\Omega$. Moreover, there exists a positive constant $C$ depending on $\Omega, q, \bar{\mu}, \underline{\mu}, \kappa$ and initial data such that for $t \ge 1,$
\begin{equation} \label{exp0}
\begin{aligned}
\|\nabla \boldsymbol{u}(\cdot, t)\|_{H^1}^2 + \|\nabla P(\cdot, t)\|_{L^2}^2 + \|\nabla \theta(\cdot, t)\|_{H^1}^2 + \|\sqrt{\rho} \boldsymbol{u}_t(\cdot, t)\|_{L^2}^2 + \|\sqrt{\rho} \theta_t(\cdot, t)\|_{L^2}^2 \le Ce^{-\sigma t}.
\end{aligned}
\end{equation}
\end{theorem}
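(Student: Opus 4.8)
The plan is to prove the theorem by the standard three-step program of local existence, uniform-in-time a priori estimates under the small-mass hypothesis, and a continuation argument, with the exponential decay extracted from time-weighted versions of the same estimates. Since the result allows vacuum, I would first establish local existence of a unique strong solution on a short interval $[0,T_0]$ by a routine linearization/iteration scheme (approximating $\rho_0$ from below by positive densities and passing to the limit); the regularity class in \eqref{regularity} is exactly what such a construction naturally yields. The heart of the matter is to show that all the norms appearing in \eqref{regularity} and \eqref{exp0} remain controlled on $[0,T_0]$ by constants independent of $T_0$, so that the local solution extends globally.

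I would organize the a priori estimates hierarchically. First, because $\mathrm{div}\,\boldsymbol{u}=0$, the transport equation $\eqref{Benard}_1$ gives the maximum principle $0\le\rho\le\bar{\rho}$ and conservation of mass $\|\rho(t)\|_{L^1}=m_0$; interpolating, $\|\rho(t)\|_{L^p}\le\bar{\rho}^{\,1-1/p}m_0^{1/p}$ is small for every $p$. This is the single source of smallness to be exploited repeatedly. Next, testing $\eqref{Benard}_2$ with $\boldsymbol{u}$ and $\eqref{Benard}_3$ with $\theta$ and adding, the buoyancy/convection coupling produces a term $\int_\Omega\rho\theta u^3\,dx$ which, by H\"older and Sobolev embedding, is bounded by $Cm_0^{2/3}\big(\|\nabla\boldsymbol{u}\|_{L^2}^2+\|\nabla\theta\|_{L^2}^2\big)$ and is therefore absorbed by the dissipation once $m_0$ is small. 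Combined with the Poincar\'e inequality $\|\sqrt{\rho}\boldsymbol{u}\|_{L^2}^2\le\bar{\rho}d^2\|\nabla\boldsymbol{u}\|_{L^2}^2$ (and likewise for $\theta$), this yields
\begin{equation*}
\frac{d}{dt}\Big(\|\sqrt{\rho}\boldsymbol{u}\|_{L^2}^2+\|\sqrt{\rho}\theta\|_{L^2}^2\Big)+2\sigma\Big(\|\sqrt{\rho}\boldsymbol{u}\|_{L^2}^2+\|\sqrt{\rho}\theta\|_{L^2}^2\Big)\le 0,
\end{equation*}
which is precisely where the rate $\sigma=\min\{\underline{\mu}/2\bar{\rho}d^2,\kappa/2\bar{\rho}d^2\}$ originates and which furnishes the $e^{\sigma t/2}$-weighted bounds in \eqref{regularity}.

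From here I would climb the regularity ladder. Testing the momentum and temperature equations with $\boldsymbol{u}_t$ and $\theta_t$ controls $\int_0^t\big(\|\sqrt{\rho}\boldsymbol{u}_t\|_{L^2}^2+\|\sqrt{\rho}\theta_t\|_{L^2}^2\big)$ together with $\|\nabla\boldsymbol{u}\|_{L^2}^2+\|\nabla\theta\|_{L^2}^2$; the term $\int_\Omega\partial_t\mu(\rho)\,|\mathfrak{D}(\boldsymbol{u})|^2$ coming from the density-dependent viscosity is rewritten via $\partial_t\mu(\rho)=-\boldsymbol{u}\cdot\nabla\mu(\rho)$, and the convective terms are again tamed by the smallness of $\|\rho\|_{L^p}$. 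Differentiating in time and testing with $\boldsymbol{u}_t,\theta_t$, in conjunction with the stationary Stokes estimate with variable coefficients,
\begin{equation*}
\|\nabla^2\boldsymbol{u}\|_{L^r}+\|\nabla P\|_{L^r}\le C\big(\|\rho\boldsymbol{u}_t\|_{L^r}+\|\rho\boldsymbol{u}\cdot\nabla\boldsymbol{u}\|_{L^r}+\|\rho\theta\|_{L^r}\big),
\end{equation*}
whose constant depends on $\|\nabla\mu(\rho)\|_{L^q}$, then promotes these to the $H^1$ and $W^{1,r}$ bounds in \eqref{regularity}. Throughout I would carry the weight $e^{\sigma t}$ through the Gr\"onwall arguments so that the decay in \eqref{exp0} emerges directly rather than being proven separately, while the density regularity $\nabla\rho\in L^\infty_tL^2$ and $\nabla\mu(\rho)\in L^\infty_tL^q$ is propagated from the transported equations for $\nabla\rho$ and $\nabla\mu(\rho)$, which yield bounds of the form $\exp\!\big(\int_0^t\|\nabla\boldsymbol{u}\|_{L^\infty}\,ds\big)$.

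The main obstacle is closing this last point globally in time: the Stokes estimate requires $\|\nabla\mu(\rho)\|_{L^q}$ to be bounded, but that bound grows with $\int_0^t\|\nabla\boldsymbol{u}\|_{L^\infty}\,ds$, while controlling $\|\nabla\boldsymbol{u}\|_{L^\infty}$ (through $W^{2,r}\hookrightarrow W^{1,\infty}$ for $r>3$) in turn requires the Stokes estimate — a genuinely coupled loop. The small-mass hypothesis is exactly what breaks it: the smallness of $\|\rho\|_{L^p}$ forces every nonlinearity entering the higher-order energies to carry a small prefactor, so that $\int_0^\infty\|\nabla\boldsymbol{u}\|_{L^\infty}\,dt$ is finite (indeed exponentially convergent, thanks to the $\sigma$-decay), keeping $\|\nabla\mu(\rho)\|_{L^q}$ uniformly bounded. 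With all norms bounded independently of $T_0$, the continuation argument produces a global solution; uniqueness follows from a standard energy estimate on the difference of two solutions in the class \eqref{regularity}, once more using the small mass to absorb the difference of the density-dependent viscous stresses and the vacuum-degenerate terms.
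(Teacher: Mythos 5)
Your proposal follows essentially the same route as the paper: the small-mass basic energy estimate with Poincar\'e's inequality producing the rate $\sigma$, the $\boldsymbol{u}_t/\theta_t$ and time-differentiated, time-weighted estimates (necessitated by the absence of a compatibility condition), the Stokes regularity theory with constant depending on $\|\nabla\mu(\rho)\|_{L^q}$, and the same continuity/bootstrap argument breaking the loop between $\sup_t\|\nabla\mu(\rho)\|_{L^q}$ and $\int_0^T\|\nabla\boldsymbol{u}\|_{L^\infty}\,dt$ via smallness rooted in $\|\rho\|_{L^p}\le\bar\rho^{1-1/p}m_0^{1/p}$ --- precisely the paper's Proposition 3.3 together with Lemmas 3.4--3.6. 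The one delicate point, which your time weights implicitly accommodate, is that the weighted bounds on $\|\sqrt{\rho}\boldsymbol{u}_t\|_{L^2}$ and $\|\sqrt{\rho}\theta_t\|_{L^2}$ are only $O(1)$ rather than small in $m_0$, so the smallness of $\int_0^T\|\nabla\boldsymbol{u}\|_{W^{1,r}}\,dt$ must be pulled out of the density factor in $\|\rho\boldsymbol{u}_t\|_{L^r}$ and out of the dissipation integral and exponential decay for the convection and buoyancy terms, exactly as the paper does in Lemma 3.6.
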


\begin{remark}
Compared with \cite{Zhong3, LiuM}, there is no need to impose the smallness conditions on the initial velocity and the initial temperature for the global existence of the strong solutions. Furthermore, the a priori estimates established in this paper is independent of $T$, this means our global strong solution is different from that of \cite{LiuM}. See \eqref{regularity} of our paper as a contrast with $(1.6)$ of \cite{LiuM}[Theorem 1.1].
\end{remark} 

\begin{remark}
When there is no thermal effect, that is, $\theta \equiv 0,$ the system \eqref{Benard} reduces to the nonhomogeneous incompressible Navier-Stokes equations. Yu-Zhang \cite{Yu} obtained the global existence and uniqueness of strong solution to Navier-Stokes equations provided that the initial total mass is suitably small. It is worth pointing out that our Theorem \ref{thm} generalizes the result of Yu-Zhang \cite{Yu} in the following two aspects. On the one hand, we remove the compatibility condition on initial data by deriving some time-weighted energy estimates. On the other hand, Theorem \ref{thm} establishes the exponential decay properties of strong solution.
\end{remark}

We now make some comments on the analysis in this paper. The local strong solution was obtained by Lemma \ref{local}. Our task now is to establish global a priori estimates on smooth solution to \eqref{Benard}-\eqref{boundary} in suitable higher norms. It turns out that the key ingredient is to obtain the time-independent bounds on the $L^1(0, \infty; L^{\infty})$-norm of $\nabla \boldsymbol{u}$ and the $L^{\infty}(0,T; L^q)$-norm of $\nabla \mu(\rho)$ and the $L^{\infty}(0,T; L^2)$-norm of $\nabla \rho$. The coupling of velocity and temperature and lack of compatibility condition on initial data bring out some difficulties. We derive some delicate analysis for the a priori bounds of the initial mass $m_0.$ Since we do not impose the compatibility condition on the initial data, we have only to derive some time-weighted energy estimates on $\|\sqrt{\rho}\boldsymbol{u}_t\|_{L^2}$ and $\|\sqrt{\rho}\theta_t\|_{L^2}$ (see \eqref{34} and \eqref{ex34}), which may not be small even the initial mass $m_0$ is sufficiently small. However, the smallness of these quantities on $m_0$ is commonly important to obtain the estimate of $\sup\|\nabla \mu(\rho)\|_{L^q}.$ To overcome this difficulty, we remark the special structure of the momentum equation $\eqref{Benard}_2$ and derive some delicate estimates (see Lemma \ref{lem4}). Finally, the time-weighted higher order energy estimates are established due to lacking of the compatibility condition.

The remainder of this paper is arranged as follows: in Section 2, we give some auxiliary lemmas which are useful in our later analysis. Section 3 is devoted to deriving some necessary a priori estimates to extend the local strong solution. Finally, we give the proof of the main result Theorem \ref{thm} in Section 4.

\section{Preliminaries}
In this section, we collect some known facts and elementary analytic inequalities that will be used frequently in the later analysis.

We start with the following local existence and uniqueness of strong solutions to the problem \eqref{Benard}-\eqref{boundary} whose proof can be derived in a similar manner as in \cite{Song, LSong}.

\begin{lemma} \label{local}
Assume that $(\rho_0, \boldsymbol{u}_0, \theta_0)$ satisfies the regularity condition \eqref{RC}. Then there exists a small time $T_0>0$ and a unique strong solution $(\rho, \boldsymbol{u}, \theta, P)$ to the problem \eqref{Benard}-\eqref{boundary} in $\Omega \times (0, T_0].$
\end{lemma}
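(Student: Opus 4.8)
The plan is to construct the local solution by a linearization-and-iteration scheme, combined with a regularization of the possibly vanishing initial density, closing the scheme on a short interval $[0,T_0]$ by time-weighted estimates so that no compatibility condition is needed. First I would approximate the data in \eqref{RC}: set $\rho_0^\delta=\rho_0+\delta$ with $\delta\in(0,1)$, so that $\delta\le\rho_0^\delta\le\bar{\rho}+1$ and the time-derivative coefficient in the momentum and temperature equations is bounded below, and mollify $\boldsymbol{u}_0,\theta_0$ into smooth divergence-free fields converging to them in $H^1_0$. The vacuum case $\rho_0\ge0$ is recovered by letting $\delta\to0$ at the very end using $\delta$-independent bounds.

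Given an iterate $(\rho^k,\boldsymbol{u}^k,\theta^k)$, I would define $(\rho^{k+1},\boldsymbol{u}^{k+1},\theta^{k+1},P^{k+1})$ by solving, in order, three linear problems: the transport equation $\partial_t\rho^{k+1}+\boldsymbol{u}^k\cdot\nabla\rho^{k+1}=0$, $\rho^{k+1}(0)=\rho_0^\delta$ (so, since $\mathrm{div}\,\boldsymbol{u}^k=0$, $\rho^{k+1}$ stays in $[\delta,\bar{\rho}+1]$); the linear variable-coefficient Stokes problem $\rho^{k+1}\partial_t\boldsymbol{u}^{k+1}+\rho^{k+1}\boldsymbol{u}^k\cdot\nabla\boldsymbol{u}^{k+1}-\mathrm{div}(2\mu(\rho^{k+1})\mathfrak{D}(\boldsymbol{u}^{k+1}))+\nabla P^{k+1}=\rho^{k+1}\theta^k\boldsymbol{e}_3$ with $\mathrm{div}\,\boldsymbol{u}^{k+1}=0$, $\boldsymbol{u}^{k+1}|_{\partial\Omega}=0$; and the linear heat problem $\rho^{k+1}\partial_t\theta^{k+1}+\rho^{k+1}\boldsymbol{u}^k\cdot\nabla\theta^{k+1}-\kappa\Delta\theta^{k+1}=\rho^{k+1}\boldsymbol{u}^{k+1}\cdot\boldsymbol{e}_3$, $\theta^{k+1}|_{\partial\Omega}=0$. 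Because $\rho^{k+1}$ is bounded above and below and $\mu(\rho^{k+1})\ge\underline{\mu}$ by \eqref{mu}, each linear problem is uniquely solvable (characteristics for the density, Galerkin for the two evolution equations).

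Next I would derive bounds on the iterates uniform in $k$ (and in $\delta$) on $[0,T_0]$, with $T_0$ depending only on the data. Testing the momentum equation with $\boldsymbol{u}^{k+1}$ and the heat equation with $\theta^{k+1}$, and controlling the couplings $\rho\theta\boldsymbol{e}_3$ and $\rho\boldsymbol{u}\cdot\boldsymbol{e}_3$ by Cauchy--Schwarz, gives the basic energy bound for $\|\sqrt{\rho^{k+1}}\boldsymbol{u}^{k+1}\|_{L^\infty(0,T_0;L^2)}$, $\|\sqrt{\rho^{k+1}}\theta^{k+1}\|_{L^\infty(0,T_0;L^2)}$ and $\|\nabla\boldsymbol{u}^{k+1}\|_{L^2(0,T_0;L^2)}$, $\|\nabla\theta^{k+1}\|_{L^2(0,T_0;L^2)}$. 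On the density side, $\mu(\rho^{k+1})$ is transported along $\boldsymbol{u}^k$, so $\nabla\mu(\rho^{k+1})$ solves a linear transport equation with source $\nabla\boldsymbol{u}^k\!\cdot\!\nabla\mu(\rho^{k+1})$, whence $\|\nabla\mu(\rho^{k+1})(t)\|_{L^q}\le\|\nabla\mu(\rho_0^\delta)\|_{L^q}\exp\!\big(\int_0^t\|\nabla\boldsymbol{u}^k\|_{L^\infty}\,ds\big)$, and likewise for $\|\nabla\rho^{k+1}\|_{L^2}$. The $L^\infty$-bound on $\nabla\boldsymbol{u}$ feeding this comes from elliptic regularity for the variable-coefficient Stokes system: with $\boldsymbol{F}^{k+1}=-\rho^{k+1}\partial_t\boldsymbol{u}^{k+1}-\rho^{k+1}\boldsymbol{u}^k\cdot\nabla\boldsymbol{u}^{k+1}+\rho^{k+1}\theta^k\boldsymbol{e}_3$ one has, for $3<r<\min\{q,6\}$, a bound $\|\boldsymbol{u}^{k+1}\|_{W^{2,r}}+\|\nabla P^{k+1}\|_{L^r}\le C(\|\nabla\mu(\rho^{k+1})\|_{L^q})\,\|\boldsymbol{F}^{k+1}\|_{L^r}$ with the constant depending continuously on $\|\nabla\mu(\rho^{k+1})\|_{L^q}$ and on $\underline{\mu},\bar{\mu}$; then $W^{1,r}\hookrightarrow L^\infty$ gives $\nabla\boldsymbol{u}^{k+1}\in L^\infty$. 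Since no compatibility condition is assumed, the bounds for the material derivatives $\sqrt{\rho^{k+1}}\partial_t\boldsymbol{u}^{k+1}$ and $\sqrt{\rho^{k+1}}\partial_t\theta^{k+1}$ --- obtained by differentiating the two evolution equations in $t$ and testing with $\partial_t\boldsymbol{u}^{k+1}$, $\partial_t\theta^{k+1}$ --- are carried with the weight $t$, yielding $t\,\|\sqrt{\rho^{k+1}}\partial_t\boldsymbol{u}^{k+1}\|_{L^2}$, $t\,\|\sqrt{\rho^{k+1}}\partial_t\theta^{k+1}\|_{L^2}\in L^\infty(0,T_0)$ and the corresponding weighted $L^2$-bounds for $\nabla\partial_t\boldsymbol{u}^{k+1},\nabla\partial_t\theta^{k+1}$, the local analogue of \eqref{regularity}.

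With these uniform bounds I would show that $(\rho^k,\boldsymbol{u}^k,\theta^k)$ is Cauchy in the lower-order norm $L^\infty(0,T_0;L^2)$ by estimating the differences of consecutive iterates and shrinking $T_0$ so that the map is a contraction; the limit is a strong solution of the $\delta$-regularized problem. Letting $\delta\to0$ along the same $\delta$-independent bounds, with weak-$*$ compactness and the transport structure of $\rho$ preserved (so $0\le\rho\le\bar{\rho}$ and $\nabla\mu(\rho)\in L^\infty(0,T_0;L^q)$ persist), yields the solution with vacuum, and uniqueness follows from the same difference argument applied to two solutions sharing the data, controlling $\|\rho^{(1)}-\rho^{(2)}\|_{L^2}$, $\|\sqrt{\rho}(\boldsymbol{u}^{(1)}-\boldsymbol{u}^{(2)})\|_{L^2}$, $\|\sqrt{\rho}(\theta^{(1)}-\theta^{(2)})\|_{L^2}$ by Gr\"onwall. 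I expect the main obstacle to be the feedback loop between the density-dependent viscosity and the velocity regularity: the transport bound for $\nabla\mu(\rho)$ requires $\nabla\boldsymbol{u}\in L^1(0,T_0;L^\infty)$, while that very $L^\infty$-bound issues from the Stokes estimate whose constant depends on $\|\nabla\mu(\rho)\|_{L^q}$, so the two must be closed simultaneously; on a short interval this is settled by a continuity (bootstrap) argument keeping both quantities finite. The secondary difficulty, the absence of a compatibility condition, is handled precisely by the time-weighted estimates on the material derivatives.
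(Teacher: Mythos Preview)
The paper does not prove this lemma at all: it simply states the result and remarks that the proof ``can be derived in a similar manner as in \cite{Song, LSong}''. Your proposal is a reasonable and essentially correct outline of exactly that approach --- regularize the density away from vacuum, run a linearization/iteration scheme, derive uniform (time-weighted) bounds, and pass to the limit --- so there is nothing to compare against in the paper itself beyond the citation.
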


Next, we will introduce the well-known Gagliardo-Nirenberg inequality which will be used later frequently. See \cite{Giga,Nirenberg} for the proof and more details.

\begin{lemma} \label{GNin}
For $p \in [2, 6], q \in (1, +\infty),$ and $r \in (3, +\infty),$ there exist some generic constant $C$ which may depend only on $p, q$ and $r,$ such that for $f \in H^1, g \in L^q \cap D^{1, r},$ the following inequalities hold.
\begin{equation} \label{GN1}
\| f \|_{L^p} \le C \|f \|_{L^2}^{\frac{6-p}{ 2p}} \|\nabla f\|_{L^2}^{\frac{3p-6}{2p}},
\end{equation}
and
\begin{equation} \label{GN2}
\| g\|_{L^{\infty}} \le C \|g\|_{L^q} + C  \|\nabla g\|_{L^r}.
\end{equation}
\end{lemma}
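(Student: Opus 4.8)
The plan is to treat the two inequalities separately, since \eqref{GN1} lives in the Sobolev regime $H^1\hookrightarrow L^6$ while \eqref{GN2} lives in the Morrey regime $W^{1,r}\hookrightarrow L^\infty$ (using $r>3$); in both cases the route is an elementary interpolation of Lebesgue norms followed by a single Sobolev embedding, so the only genuine work is the exponent bookkeeping. Throughout I would keep track of the dependence of the constant $C$ on $p,q,r$ (and implicitly on $\Omega$), and invoke the classical machinery of \cite{Giga,Nirenberg}.

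For \eqref{GN1}, I would first pin down the interpolation parameter. Writing $\frac1p=\frac{1-\lambda}{2}+\frac{\lambda}{6}$ and solving gives $\lambda=\frac{3p-6}{2p}\in[0,1]$ for $p\in[2,6]$, with $1-\lambda=\frac{6-p}{2p}$. Applying Hölder's inequality with conjugate exponents $\frac{2}{(1-\lambda)p}$ and $\frac{6}{\lambda p}$ to the splitting $|f|^p=|f|^{(1-\lambda)p}\,|f|^{\lambda p}$ (equivalently, using log-convexity of $s\mapsto\|f\|_{L^{1/s}}$) yields
$$\|f\|_{L^p}\le \|f\|_{L^2}^{1-\lambda}\,\|f\|_{L^6}^{\lambda}.$$
I would then invoke the three-dimensional Sobolev embedding $\|f\|_{L^6}\le C\|\nabla f\|_{L^2}$ and substitute the computed values of $\lambda$ and $1-\lambda$ to land on exactly \eqref{GN1}.

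For \eqref{GN2}, the decisive structural fact is $r>3=\dim\Omega$, which places us in the Morrey range where $W^{1,r}(\Omega)\hookrightarrow C(\overline{\Omega})\hookrightarrow L^\infty(\Omega)$. The plan is to establish the multiplicative form
$$\|g\|_{L^\infty}\le C\|\nabla g\|_{L^r}^{\theta}\,\|g\|_{L^q}^{1-\theta},\qquad \theta=\frac{3/q}{1+3/q-3/r}\in(0,1),$$
whose exponent $\theta$ is forced by the scaling $x\mapsto\lambda x$ (one checks the balance $\theta(1-\tfrac3r)=(1-\theta)\tfrac3q$). On the bounded domain $\Omega$ I would realize this either through a bounded extension operator $E\colon W^{1,r}(\Omega)\to W^{1,r}(\mathbb{R}^3)$ to reduce to the whole-space Morrey estimate, or, more directly, through Morrey's pointwise oscillation bound $|g(x)-g(y)|\le C\|\nabla g\|_{L^r}|x-y|^{1-3/r}$ combined with a ball-average controlled by $\|g\|_{L^q}$. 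Finally, Young's inequality with conjugate exponents $1/\theta$ and $1/(1-\theta)$ converts the product into the additive form,
$$\|\nabla g\|_{L^r}^{\theta}\,\|g\|_{L^q}^{1-\theta}\le \|\nabla g\|_{L^r}+\|g\|_{L^q},$$
which is precisely \eqref{GN2}.

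The genuinely delicate point is bridging the gap between the integrability exponent $q$ in the lower-order term and the exponent $r$ governing the gradient: since $q\in(1,\infty)$ may be far smaller than $r$, one cannot simply dominate $\|g\|_{L^r}$ by $\|g\|_{L^q}$ on $\Omega$, so the scaling exponent $\theta$ must be chosen exactly and the lower-order norm folded in via Young's inequality (and, on the bounded domain, via the additive Poincaré-type correction absorbed into $C$). Everything else is standard Gagliardo--Nirenberg--Sobolev--Morrey theory, so beyond this bookkeeping I expect no new ideas to be required.
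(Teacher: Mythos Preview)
The paper does not actually prove this lemma: it states the two inequalities and simply refers the reader to \cite{Giga,Nirenberg} for details. Your proposal supplies exactly the standard argument those references contain (Lebesgue interpolation between $L^2$ and $L^6$ followed by the Sobolev embedding for \eqref{GN1}; Morrey's embedding in the range $r>3$ plus Young's inequality for \eqref{GN2}), so there is nothing to compare---you have written out what the paper chose to omit, and the reasoning is sound.

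One small caveat worth flagging: as stated, \eqref{GN1} with only $\|\nabla f\|_{L^2}$ on the right (and no $\|f\|_{L^2}$ correction) is not literally true for arbitrary $f\in H^1(\Omega)$ on a bounded domain (take $f\equiv 1$). The embedding $\|f\|_{L^6}\le C\|\nabla f\|_{L^2}$ you invoke holds for $f\in H^1_0(\Omega)$ (via Poincar\'e) or on $\mathbb{R}^3$, but not for general $H^1(\Omega)$. The paper is equally imprecise here, and since every application in Section~3 is to $\boldsymbol{u}\in H^1_{0,\sigma}$ or $\theta\in H^1_0$ (or to derivatives, where an $H^1$-norm bound suffices), the slip is harmless in context; still, your write-up should either restrict to $H^1_0$ or replace $\|\nabla f\|_{L^2}$ by $\|f\|_{H^1}$ on the right of \eqref{GN1}.
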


Finally, the following regularity results for the Stokes system will be used extensively for derivation of higher order estimates. Refer to \cite{HWang} for the proof.

\begin{lemma} \label{stokeseq}
For constants $q >3, \underline{\mu}, \bar{\mu} >0,$ in addition to \eqref{mu}, the function $\mu$ satisfies
$$ \nabla \mu(\rho) \in L^q, ~ \underline{\mu} \le \mu(\rho) \le \bar{\mu}.$$
Assume that $(\boldsymbol{u}, P) \in H^1_{0,\sigma} \times L^2$ is the unique weak solution to the following problem
\begin{equation} \label{stokes}
\left\{
\begin{aligned}
-\mathrm{div} [\mu(\rho)(\nabla \boldsymbol{u} + (\nabla \boldsymbol{u})^T)] + \nabla P = \mathbf{F},  \quad{} \quad &\quad x \in \Omega, \\
\mathrm{div} \boldsymbol{u} =0, \quad \quad &\quad x \in \Omega, \\
 \boldsymbol{u}(x) = 0, \quad \quad & \quad   x \in \partial \Omega. \\
\end{aligned}
\right.
\end{equation}
Then, there exists a positive constant $C$ depending only on $\Omega, \underline{\mu}, \bar{\mu}$ such that the following regularity results hold true:
\begin{itemize}
\item If $\mathbf{F} \in L^2,$ then $(\boldsymbol{u}, P) \in H^2 \times H^1$ and
\begin{equation} \label{s1}
\|\boldsymbol{u}\|_{H^2} + \| P/\mu(\rho)\|_{H^1} \le C \|\mathbf{F}\|_{L^2}\left(1 + \|\nabla \mu(\rho)\|_{L^q}^{\frac{q}{q-3}}\right).
\end{equation}
\item If $\mathbf{F} \in L^r$ for some $r \in (2, q)$, then $(\boldsymbol{u}, P) \in W^{2,r} \times W^{1,r}$ and
\begin{equation} \label{s2}
\|\boldsymbol{u}\|_{W^{2,r}} + \|P/\mu(\rho)\|_{W^{1,r}} \le C \|\mathbf{F}\|_{L^r}\left(1 + \|\nabla \mu(\rho)\|_{L^q}^{\frac{q(5r-6)}{2r(q-3)}}\right).
\end{equation}
\end{itemize}
\end{lemma}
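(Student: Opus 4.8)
The plan is to reduce the variable-coefficient system \eqref{stokes} to a constant-coefficient Stokes problem and treat the $\mu$-dependence as a lower-order forcing. Since $\mathrm{div}\,\boldsymbol{u}=0$, one has $\mathrm{div}[(\nabla\boldsymbol{u})^T]=\nabla(\mathrm{div}\,\boldsymbol{u})=0$, so that
\[
\mathrm{div}[\mu(\rho)(\nabla\boldsymbol{u}+(\nabla\boldsymbol{u})^T)]=\mu(\rho)\Delta\boldsymbol{u}+(\nabla\boldsymbol{u}+(\nabla\boldsymbol{u})^T)\nabla\mu(\rho).
\]
Dividing the momentum equation by $\mu(\rho)$ and introducing the normalized pressure $\pi\triangleq P/\mu(\rho)$, the identity $\nabla P/\mu=\nabla\pi+(\pi/\mu)\nabla\mu$ recasts \eqref{stokes} as the constant-coefficient Stokes problem $-\Delta\boldsymbol{u}+\nabla\pi=\boldsymbol{g}$, $\mathrm{div}\,\boldsymbol{u}=0$ in $\Omega$, $\boldsymbol{u}|_{\partial\Omega}=0$, with
\[
\boldsymbol{g}\triangleq\frac{1}{\mu(\rho)}\mathbf{F}+\frac{1}{\mu(\rho)}(\nabla\boldsymbol{u}+(\nabla\boldsymbol{u})^T)\nabla\mu(\rho)-\frac{\pi}{\mu(\rho)}\nabla\mu(\rho).
\]
The occurrence of $P/\mu(\rho)$ in the conclusion is exactly what this substitution produces, and because $\underline{\mu}\le\mu\le\bar{\mu}$ every $1/\mu(\rho)$ weight is harmless.

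With this form in hand, I invoke the classical $L^p$-regularity for the constant-coefficient Stokes operator, $\|\boldsymbol{u}\|_{W^{2,p}}+\|\pi\|_{W^{1,p}}\le C\|\boldsymbol{g}\|_{L^p}$. For \eqref{s1} (the case $p=2$) the two product terms are handled by H\"older and the Gagliardo--Nirenberg inequality \eqref{GN1}: writing $s=2q/(q-2)\in(2,6)$ one computes the interpolation exponent to be $3/q$, whence $\|(\nabla\boldsymbol{u}+(\nabla\boldsymbol{u})^T)\nabla\mu\|_{L^2}\le\|\nabla\mu\|_{L^q}\|\nabla\boldsymbol{u}\|_{L^s}\le C\|\nabla\mu\|_{L^q}\|\nabla\boldsymbol{u}\|_{L^2}^{1-3/q}\|\nabla^2\boldsymbol{u}\|_{L^2}^{3/q}$, and similarly for $\|(\pi/\mu)\nabla\mu\|_{L^2}$ with $\boldsymbol{u}$ replaced by $\pi$. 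Young's inequality with conjugate exponents $q/3$ and $q/(q-3)$ then produces $\delta(\|\nabla^2\boldsymbol{u}\|_{L^2}+\|\nabla\pi\|_{L^2})+C_\delta\|\nabla\mu\|_{L^q}^{q/(q-3)}(\|\nabla\boldsymbol{u}\|_{L^2}+\|\pi\|_{L^2})$, the first group of which is absorbed into the left-hand side. The basic energy identity (testing \eqref{stokes} with $\boldsymbol{u}$, then Korn and Poincar\'e) gives $\|\nabla\boldsymbol{u}\|_{L^2}\le C\|\mathbf{F}\|_{L^2}$, while the pressure bound $\|\pi\|_{L^2}\le C\|\mathbf{F}\|_{L^2}$ follows from the Ne\v{c}as inequality $\|P\|_{L^2}\le C\|\nabla P\|_{H^{-1}}$ applied to the momentum equation. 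Combining these yields \eqref{s1}.

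For \eqref{s2} I bootstrap on the $H^2\times H^1$ regularity just obtained. The Stokes $L^r$ estimate again reduces matters to $\|\nabla\mu\cdot\nabla\boldsymbol{u}\|_{L^r}$ and $\|(\pi/\mu)\nabla\mu\|_{L^r}$. H\"older against $\nabla\mu\in L^q$ forces the exponent $s=qr/(q-r)$, and the Gagliardo--Nirenberg interpolation $\|\nabla\boldsymbol{u}\|_{L^s}\le C\|\nabla\boldsymbol{u}\|_{L^2}^{1-a}\|\nabla^2\boldsymbol{u}\|_{L^r}^{a}$, with $a$ fixed by $1/s=(1-a)/2+a(1/r-1/3)$, gives precisely $1/(1-a)=q(5r-6)/(2r(q-3))$ and $a\in(0,1)$ for $r\in(2,q)$, $q>3$. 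Young's inequality with exponents $1/a,1/(1-a)$ then isolates an absorbable $\delta\|\nabla^2\boldsymbol{u}\|_{L^r}$ together with $C_\delta\|\nabla\mu\|_{L^q}^{q(5r-6)/(2r(q-3))}\|\nabla\boldsymbol{u}\|_{L^2}$, and the identical scheme disposes of the pressure product. Using $\|\nabla\boldsymbol{u}\|_{L^2}+\|\pi\|_{L^2}\le C\|\mathbf{F}\|_{L^2}\le C\|\mathbf{F}\|_{L^r}$ (the last step by $L^r\hookrightarrow L^2$ on the bounded domain) closes \eqref{s2}; as a consistency check, $r=2$ reproduces the exponent $q/(q-3)$ of \eqref{s1}.

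The main obstacle is that the ``lower-order'' terms in $\boldsymbol{g}$ are not genuinely lower order when $\nabla\mu$ is large: the product $\nabla\mu\cdot\nabla\boldsymbol{u}$ carries as many derivatives on $\boldsymbol{u}$ as the quantity one wishes to control, so the entire argument rests on peeling off a small multiple of the top-order norm ($\|\nabla^2\boldsymbol{u}\|$ or $\|\nabla\pi\|$) for absorption while confining all $\|\nabla\mu\|_{L^q}$-dependence to factors bounded by $\|\mathbf{F}\|$ alone. This mechanism is exactly what dictates the non-obvious powers $q/(q-3)$ and $q(5r-6)/(2r(q-3))$, and it is why $\|\pi\|_{L^2}$ must be estimated independently of $\nabla\mu$ (through Ne\v{c}as/Bogovskii with the normalization $\int_\Omega P\,dx=0$) rather than through the Stokes estimate itself. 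A secondary point requiring care is that absorbing $\delta\|\nabla^2\boldsymbol{u}\|_{L^r}$ presupposes $\boldsymbol{u}\in W^{2,r}$; this is made rigorous by first deriving all inequalities for smooth approximations (or via the method of continuity in $\|\nabla\mu\|_{L^q}$) and then passing to the limit.
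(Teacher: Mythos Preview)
Your argument is correct and follows precisely the strategy of Huang--Wang \cite{HWang}, which the paper cites in lieu of a proof: rewrite the variable-viscosity system as a constant-coefficient Stokes problem for $(\boldsymbol{u},\pi)$ with $\pi=P/\mu(\rho)$, then control the commutator terms $(\nabla\boldsymbol{u}+(\nabla\boldsymbol{u})^T)\nabla\mu/\mu$ and $(\pi/\mu)\nabla\mu$ by H\"older, Gagliardo--Nirenberg interpolation, and absorption. Your computation of the interpolation exponents, in particular the identity $1/(1-a)=q(5r-6)/(2r(q-3))$, exactly reproduces the powers in \eqref{s1}--\eqref{s2}, and your remarks on bounding $\|\pi\|_{L^2}$ independently of $\nabla\mu$ via Ne\v{c}as and on justifying the absorption step by approximation address the only genuine subtleties.
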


\section{A priori estimates}
In this section, we establish some necessary a priori bounds for strong solution $(\rho, \boldsymbol{u}, \theta)$ to the problem \eqref{Benard}-\eqref{boundary} to extend the local strong solution. Thus, let $T>0$ be a fixed time and $(\rho, \boldsymbol{u}, \theta)$ be the strong solution to problem \eqref{Benard}-\eqref{boundary} on $\Omega \times (0, T]$ with initial data $(\rho_0, \boldsymbol{u}_0, \theta_0)$ satisfying \eqref{RC}. In what follows, we denote by
$$ \int \cdot dx = \int_{\Omega} \cdot dx,$$
the constant $C$ will denote some positive constant which depends only on $\Omega, q, \underline{\mu}, \bar{\mu}, \kappa$ and initial data, and sometimes we use $C(f)$ to emphasize the dependence of $f$. 

Before proceeding, we rewrite another equivalent form of the system \eqref{Benard} as the following
\begin{equation} \label{Benard1}
\left\{
\begin{aligned}
&\rho_t + \boldsymbol{u} \cdot \nabla \rho = 0, \\
& \rho \boldsymbol{u}_t  + \rho \boldsymbol{u}\cdot \nabla \boldsymbol{u} - \mathrm{div}(2\mu(\rho)\mathfrak{D}(\boldsymbol{u}))+\nabla P = \rho \theta \boldsymbol{e}_3,\\
& \rho \theta_t  + \rho \boldsymbol{u} \cdot \nabla \theta -\kappa \Delta \theta = \rho  \boldsymbol{u} \cdot \boldsymbol{e}_3, \\
&\mathrm{div} \boldsymbol{u}= 0.
\end{aligned}
\right.
\end{equation}

First of all, since the density $\rho$ satisfies a transport equation $\eqref{Benard1}_1$, applying the characteristic method gives the following results.
\begin{lemma} \label{lem0}
For $p \in [1, \infty],$ it holds that
\begin{equation} \label{rholp}
\|\rho(t)\|_{L^p} = \|\rho_0\|_{L^p},~~\text{for~every}~t \in [0,T].
\end{equation}
Furthermore, it deduces from \eqref{RC} that
\begin{equation} \label{rhobound}
0 \le \rho(x,t) \le \bar{\rho},~~\text{for~every}~(x,t) \in \Omega \times [0, T].
\end{equation}
\end{lemma}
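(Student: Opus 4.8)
The plan is to use the method of characteristics, exploiting that the density equation $\eqref{Benard1}_1$ is a pure transport equation with divergence-free drift. First I would introduce the flow map $X(t,x)$ associated with the velocity field, defined as the solution of the ODE
$$\frac{d}{dt} X(t,x) = \boldsymbol{u}(X(t,x), t), \quad X(0,x) = x.$$
Since for a strong (indeed smooth) solution the velocity $\boldsymbol{u}$ is sufficiently regular (in particular $\boldsymbol{u} \in H^2 \hookrightarrow C(\bar{\Omega})$ via the Stokes estimate of Lemma \ref{stokeseq}), this flow is well defined, and the no-slip boundary condition $\boldsymbol{u}|_{\partial\Omega}=0$ guarantees that trajectories starting in $\Omega$ stay in $\Omega$, so $X(t, \cdot): \Omega \to \Omega$ is a diffeomorphism.

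Next I would verify that $\rho$ is constant along trajectories. Differentiating along a characteristic and using $\eqref{Benard1}_1$,
$$\frac{d}{dt} \rho(X(t,x), t) = \left(\rho_t + \boldsymbol{u} \cdot \nabla \rho\right)(X(t,x),t) = 0,$$
so $\rho(X(t,x),t) = \rho_0(x)$. This immediately yields the pointwise bound $\eqref{rhobound}$: since $0 \le \rho_0 \le \bar{\rho}$ by $\eqref{RC}$ and $X(t,\cdot)$ maps $\Omega$ onto $\Omega$, the range of $\rho(\cdot,t)$ coincides with the range of $\rho_0$, whence $0 \le \rho(x,t) \le \bar{\rho}$.

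For the $L^p$ identity $\eqref{rholp}$, the key point is that the divergence-free condition makes the flow volume-preserving. Writing $J(t,x) = \det\left(\nabla_x X(t,x)\right)$ for the Jacobian, Liouville's formula gives
$$\frac{d}{dt} J(t,x) = (\mathrm{div}\,\boldsymbol{u})(X(t,x),t)\, J(t,x) = 0,$$
so $J \equiv J(0,x) = 1$. For $p \in [1, \infty)$, the change of variables $y = X(t,x)$ then gives
$$\int_\Omega \rho(y,t)^p \, dy = \int_\Omega \rho(X(t,x),t)^p\, J(t,x) \, dx = \int_\Omega \rho_0(x)^p \, dx,$$
that is, $\|\rho(t)\|_{L^p} = \|\rho_0\|_{L^p}$; the case $p=\infty$ follows by passing to the limit $p \to \infty$ or directly from the equality of ranges noted above.

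The only genuine technical point is justifying the regularity of the flow map so that the ODE theory and the change-of-variables formula apply, but this is routine in the present a priori setting where the solution is assumed smooth. If one prefers to bypass the flow map altogether, an equivalent argument is to multiply $\eqref{Benard1}_1$ by $p\rho^{p-1}$ to obtain $\partial_t(\rho^p) + \mathrm{div}(\rho^p \boldsymbol{u}) = 0$, then integrate over $\Omega$ and use $\mathrm{div}\,\boldsymbol{u}=0$ together with $\boldsymbol{u}|_{\partial\Omega}=0$ to conclude $\frac{d}{dt}\|\rho(t)\|_{L^p}^p = 0$, which gives $\eqref{rholp}$ for finite $p$ and, by letting $p\to\infty$, for $p=\infty$ as well.
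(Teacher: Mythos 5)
Your proposal is correct and follows exactly the route the paper takes: the paper proves Lemma \ref{lem0} by simply invoking the method of characteristics for the transport equation $\eqref{Benard1}_1$, and your write-up (flow map, constancy of $\rho$ along trajectories, Liouville's formula with $\mathrm{div}\,\boldsymbol{u}=0$ giving a measure-preserving flow, hence conservation of all $L^p$ norms and of the pointwise bounds) is precisely that argument carried out in detail. The alternative renormalization argument you sketch at the end is a standard equivalent and requires no further comment.
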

\begin{lemma} \label{lem1}
If 
\begin{equation} \label{small1}
\begin{aligned}
m_0 < \frac{\underline{\mu}\kappa}{C_1^2 \bar{\rho}^{\frac{2}{3}}},
\end{aligned}
\end{equation}
where $C_1$ is defined as in \eqref{12}, then, one has
\begin{equation} \label{elementary}
\begin{aligned}
\sup_{[0,T]}   \left(\|\sqrt{\rho}\boldsymbol{u}\|_{L^2}^2
+  \|\sqrt{\rho} \theta\|_{L^2}^2 \right) +  \int_0^T & \left(\underline{\mu}\|\nabla \boldsymbol{u}\|_{L^2}^2 + \kappa\|\nabla \theta\|_{L^2}^2 \right)dt \le C m_0^{\frac{2}{3}}.
\end{aligned}
\end{equation}
Moreover, letting $\sigma \triangleq \min \{\frac{\underline{\mu}}{2\bar{\rho}d^2}, \frac{\kappa}{2\bar{\rho}d^2}\},$ it holds that
\begin{equation} \label{Exponential 1}
\sup_{[0, T]}  e^{\sigma t} \left ( \left \| \sqrt{\rho } \boldsymbol{u} \right \|_{L^{2} }^{2}+\left \| \sqrt{\rho } \theta  \right \| _{L^{2} }^{2} \right ) +\int_{0}^{T} e^{\sigma t}\left ( \left \| \nabla \boldsymbol{u} \right \|_{L^{2} }^{2}+\left \| \nabla \theta  \right \| _{L^{2} }^{2} \right )  dt\le Cm_{0}^{\frac{2}{3} }.
\end{equation}
\end{lemma}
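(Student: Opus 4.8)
The plan is to prove both estimates by a single basic energy argument, testing the momentum and temperature equations against $\boldsymbol{u}$ and $\theta$ respectively, and then using the smallness of the total mass $m_0$ to absorb the buoyancy--convection coupling into the viscous and thermal dissipation. Multiplying $\eqref{Benard1}_2$ by $\boldsymbol{u}$ and integrating over $\Omega$, the continuity equation $\eqref{Benard1}_1$ together with $\mathrm{div}\,\boldsymbol{u}=0$ and the no-slip condition turns the material-derivative terms into $\tfrac12\frac{d}{dt}\|\sqrt{\rho}\boldsymbol{u}\|_{L^2}^2$, kills the pressure term since $\int\nabla P\cdot\boldsymbol{u}\,dx=-\int P\,\mathrm{div}\,\boldsymbol{u}\,dx=0$, and leaves the viscous contribution $\int 2\mu(\rho)|\mathfrak{D}(\boldsymbol{u})|^2\,dx$. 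Because $\boldsymbol{u}$ is divergence free and vanishes on $\partial\Omega$ one has $\int 2|\mathfrak{D}(\boldsymbol{u})|^2\,dx=\int|\nabla\boldsymbol{u}|^2\,dx$, so by \eqref{mu} this is bounded below by $\underline{\mu}\|\nabla\boldsymbol{u}\|_{L^2}^2$. Multiplying $\eqref{Benard1}_3$ by $\theta$ produces $\tfrac12\frac{d}{dt}\|\sqrt{\rho}\theta\|_{L^2}^2+\kappa\|\nabla\theta\|_{L^2}^2$. Since the buoyancy forcing gives $\int\rho\theta\,u^3\,dx$ and the convection forcing gives $\int\rho\,u^3\theta\,dx$, which coincide, adding the two identities yields
\begin{equation*}
\frac12\frac{d}{dt}\left(\|\sqrt{\rho}\boldsymbol{u}\|_{L^2}^2+\|\sqrt{\rho}\theta\|_{L^2}^2\right)+\underline{\mu}\|\nabla\boldsymbol{u}\|_{L^2}^2+\kappa\|\nabla\theta\|_{L^2}^2\le 2\int\rho\, u^3\,\theta\,dx .
\end{equation*}

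The crux is to dominate the right-hand side by the dissipation with a small prefactor. By H\"older's inequality with exponents $(\tfrac32,6,6)$ and the embedding $H^1_0\hookrightarrow L^6$ (the case $p=6$ of \eqref{GN1}),
\begin{equation*}
2\int\rho\, u^3\,\theta\,dx\le 2\|\rho\|_{L^{3/2}}\|\boldsymbol{u}\|_{L^6}\|\theta\|_{L^6}\le C_1\|\rho\|_{L^{3/2}}\|\nabla\boldsymbol{u}\|_{L^2}\|\nabla\theta\|_{L^2}.
\end{equation*}
Interpolating $\|\rho\|_{L^{3/2}}\le\|\rho\|_{L^1}^{2/3}\|\rho\|_{L^\infty}^{1/3}\le\bar{\rho}^{1/3}m_0^{2/3}$ via \eqref{rholp} and \eqref{rhobound}, the smallness hypothesis \eqref{small1} lets me bound this term by $\tfrac{\underline{\mu}}{2}\|\nabla\boldsymbol{u}\|_{L^2}^2+\tfrac{\kappa}{2}\|\nabla\theta\|_{L^2}^2$ through a weighted Young inequality calibrated to both $\underline{\mu}$ and $\kappa$. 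Consequently
\begin{equation*}
\frac{d}{dt}\left(\|\sqrt{\rho}\boldsymbol{u}\|_{L^2}^2+\|\sqrt{\rho}\theta\|_{L^2}^2\right)+\underline{\mu}\|\nabla\boldsymbol{u}\|_{L^2}^2+\kappa\|\nabla\theta\|_{L^2}^2\le 0 .
\end{equation*}
The same interpolation applied to the initial data gives $\|\sqrt{\rho_0}\boldsymbol{u}_0\|_{L^2}^2+\|\sqrt{\rho_0}\theta_0\|_{L^2}^2\le C\|\rho_0\|_{L^{3/2}}\big(\|\nabla\boldsymbol{u}_0\|_{L^2}^2+\|\nabla\theta_0\|_{L^2}^2\big)\le Cm_0^{2/3}$, so integrating in time yields \eqref{elementary}.

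For the exponential decay \eqref{Exponential 1}, I would upgrade this differential inequality using the Poincar\'e inequality. Since $\boldsymbol{u},\theta$ vanish on $\partial\Omega$, $\|\sqrt{\rho}\boldsymbol{u}\|_{L^2}^2\le\bar{\rho}d^2\|\nabla\boldsymbol{u}\|_{L^2}^2$ and likewise for $\theta$; hence with $\sigma=\min\{\tfrac{\underline{\mu}}{2\bar{\rho}d^2},\tfrac{\kappa}{2\bar{\rho}d^2}\}$ one has $\sigma\big(\|\sqrt{\rho}\boldsymbol{u}\|_{L^2}^2+\|\sqrt{\rho}\theta\|_{L^2}^2\big)\le\tfrac12\big(\underline{\mu}\|\nabla\boldsymbol{u}\|_{L^2}^2+\kappa\|\nabla\theta\|_{L^2}^2\big)$. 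Writing $E=\|\sqrt{\rho}\boldsymbol{u}\|_{L^2}^2+\|\sqrt{\rho}\theta\|_{L^2}^2$, multiplying the last displayed inequality by $e^{\sigma t}$ and using $e^{\sigma t}\tfrac{d}{dt}E=\tfrac{d}{dt}(e^{\sigma t}E)-\sigma e^{\sigma t}E$, the term $\sigma e^{\sigma t}E$ is absorbed into half of the weighted dissipation, giving $\tfrac{d}{dt}(e^{\sigma t}E)+\tfrac12 e^{\sigma t}\big(\underline{\mu}\|\nabla\boldsymbol{u}\|_{L^2}^2+\kappa\|\nabla\theta\|_{L^2}^2\big)\le0$. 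Integrating from $0$ to $T$ and invoking $E(0)\le Cm_0^{2/3}$ delivers \eqref{Exponential 1}.

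The only genuinely delicate point is the absorption of the coupling term $2\int\rho\,u^3\theta\,dx$: this is exactly where the smallness of $m_0$ is used, entering through $\|\rho\|_{L^{3/2}}\le\bar{\rho}^{1/3}m_0^{2/3}$, and the splitting in the weighted Young inequality must be compatible with both the viscous coefficient $\underline{\mu}$ and the thermal coefficient $\kappa$, which is precisely what fixes the threshold \eqref{small1}. The remaining manipulations are routine energy computations.
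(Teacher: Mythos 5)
Your proposal is correct and follows essentially the same route as the paper's proof: the identical energy identity with the doubled coupling term $2\int\rho u^3\theta\,dx$, the same H\"older--Sobolev bound $\|\rho\|_{L^{3/2}}\le\bar{\rho}^{1/3}m_0^{2/3}$ feeding a weighted Young absorption into the dissipation (the paper's \eqref{12}--\eqref{013}), the same $L^{3/2}$ interpolation for the initial energy, and the same Poincar\'e-based multiplication by $e^{\sigma t}$ for \eqref{Exponential 1}. No gaps; your version even keeps the harmless factor $\tfrac12$ on the dissipation in the exponential step that the paper's \eqref{16} silently absorbs.
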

\begin{proof}
Multiplying $\eqref{Benard1}_2$ by $\boldsymbol{u}$, multiplying $\eqref{Benard1}_3$ by $\theta$, respectively, and integrating the resulting equations over $\Omega $, we obtain from integration by parts that
\begin{equation} \label{11}
\frac{1}{2} \frac{d}{dt} \int \left ( \rho \left |  \boldsymbol{u} \right |^2 +\rho \theta ^{2}  \right )dx+\int \left ( 2\mu(\rho)|d|^{2} +\kappa |\nabla \theta| ^{2}   \right )dx=2\int \rho \left ( \boldsymbol{u} \cdot e_{3} \right )\theta dx.
\end{equation}
Noting that we use the facts
\begin{equation}
2\int \mu \left ( \rho  \right ) \mathfrak{D}(\boldsymbol{u}): \nabla \boldsymbol{u} dx=2\int \mu \left ( \rho  \right )\left | d \right |^{2} dx
\end{equation}
and
\begin{equation}
2\int \left | \mathfrak{D}(\boldsymbol{u}) \right | ^{2} dx=\int \left | \nabla\boldsymbol{u} \right |^2 dx.
\end{equation}
Moreover, it follows from H\"older's inequality, \eqref{rholp} and Sobolev's inequality that
\begin{equation}\label{12}
\begin{aligned}
\left | 2\int \rho \left ( \boldsymbol{u}\cdot e_{3}  \right )  \theta dx\right |& \le 2\left \| \sqrt{\rho } \boldsymbol{u} \right \|_{L^{2} }\left \| \sqrt{\rho } \theta  \right \|  _{L^{2} } \\
& \le 2\left \| \sqrt{\rho } \right \|_{L^{3} }^{2} \left \| \boldsymbol{u} \right \| _{L^{6} } \left \| \theta  \right \| _{L^{6} }\\
&\le C \left \| \rho  \right \| _{L^{\frac{3}{2} } } \left \| \nabla\boldsymbol{u} \right \|_{L^{2} } \left \| \nabla \theta  \right \| _{L^{2} }\\
&\le  C_{1} \left \| \rho _{0}   \right \| _{L^{\frac{3}{2} } } \left \| \nabla \boldsymbol{u} \right \|_{L^{2} } \left \| \nabla \theta  \right \| _{L^{2} }\\
&\le C_{1}\bar{\rho } ^{\frac{1}{3} }  \left \| \rho _{0}  \right \| _{L^{1} }^{\frac{2}{3} }\left \| \nabla \boldsymbol{u}  \right \|_{L^{2} }\left \| \nabla \theta  \right \|_{L^{2} },
\end{aligned}
\end{equation}
for some $C_1$ depending only on $\Omega.$

Substituting $\eqref{12}$ into $\eqref{11}$, one has
\begin{equation} \label{13}
\begin{aligned}
\frac{1}{2} \frac{d}{dt} \left ( \left \| \sqrt{\rho } \boldsymbol{u} \right \|_{L^{2} }^{2} + \left \| \sqrt{\rho } \theta  \right \| _{L^{2} }^{2}  \right ) +  \underline{\mu } \left \| \nabla \boldsymbol{u} \right \| _{L^{2} }^{2}  + & \kappa \left \| \nabla \theta  \right \| _{L^{2} }^{2} \\
& \quad   \le C_{1}\bar{\rho } ^{\frac{1}{3} }  m_{0}^{\frac{2}{3} } \left \| \nabla \boldsymbol{u}  \right \|_{L^{2} }\left \| \nabla \theta  \right \|_{L^{2} }.
\end{aligned}
\end{equation}
It is easy to find that
\begin{equation} \label{013}
\frac{1}{2} \underline{\mu } \left \| \nabla \boldsymbol{u} \right \|_{L^{2} }^{2} +\frac{1}{2}\kappa \left \| \nabla \theta  \right \| _{L^{2} }^{2}-C_{1}\bar{\rho } ^{\frac{1}{3} }  m_{0}^{\frac{2}{3} } \left \| \nabla \boldsymbol{u}  \right \|_{L^{2} }\left \| \nabla \theta  \right \|_{L^{2} }\le 0,
\end{equation}
provided that $m_{0} < \frac{\underline{\mu } \kappa }{C_{1}^{2} \bar{\rho } ^{\frac{2}{3} }  }  $, which together with \eqref{13} implies
\begin{equation}\label{14}
\frac{d}{dt} \left ( \left \| \sqrt{\rho } \boldsymbol{u} \right \|_{L^{2} }^{2}+\left \| \sqrt{\rho } \theta  \right \| _{L^{2} }^{2} \right )+\underline{\mu } \left \| \nabla \boldsymbol{u} \right \|_{L^{2} }^{2}+\kappa \left \| \nabla \theta  \right \| _{L^{2} }^{2}\le 0.
\end{equation}
Integrating $\eqref{14}$ over $\left [ 0,T \right ] $ leads to
\begin{equation} \label{0012}
\begin{aligned}
&\underset{\left [ 0,T \right ] }{\sup } \left ( \left \| \sqrt{\rho } \boldsymbol{u} \right \|_{L^{2} }^{2}+ \left \| \sqrt{\rho } \theta  \right \| _{L^{2} }^{2} \right )+\int_{0}^{T}\left ( \underline{\mu } \left \| \nabla \boldsymbol{u} \right \|_{L^{2} }^{2}+ \kappa \left \| \nabla \theta  \right \| _{L^{2} }^{2}
 \right ) dt\\
 \le & \left \| \sqrt{\rho_{0}  } \boldsymbol{u}_{0}  \right \|_{L^{2} }^{2}+\left \| \sqrt{\rho_{0}  } \theta _{0}  \right \| _{L^{2} }^{2}\\
 \le & \left \| \sqrt{\rho _{0} }  \right \| _{L^{3} }^{2}\left ( \left \| \nabla \boldsymbol{u_{0} } \right \|_{L^{2} }^{2} +\left \| \nabla \theta _{0}  \right \| _{L^{2} }^{2}\right )\\
 \le &  C\bar{\rho } ^{\frac{1}{3} } \left \| \rho _{0}  \right \|  _{L^{1} }^{\frac{2}{3} } \left ( \left \| \nabla \boldsymbol{u_{0} } \right \|_{L^{2} }^{2} +\left \| \nabla \theta _{0}  \right \| _{L^{2} }^{2}\right )\\
 \le & C\left ( \Omega ,\bar{\rho } , \left \| \nabla \boldsymbol{u_{0} } \right \|_{L^{2} }, \left \| \nabla \theta _{0}  \right \| _{L^{2} }\right )m_{0}^{\frac{2}{3} },
 \end{aligned}
\end{equation}
therefore the proof of \eqref{elementary} is completed.

Furthermore, it follows from \eqref{rhobound} and Poincar\'e's inequality (\cite{Struwe}[(A.3.)]) that
\begin{equation}
   \left \| \sqrt{\rho } \boldsymbol{u} \right \|_{L^{2} }^{2}\le \bar{\rho } \left \| \boldsymbol{u} \right \|_{L^{2} }^{2}\le \bar{\rho } d^{2}\left \| \nabla \boldsymbol{u} \right \|_{L^{2} }^{2},
\end{equation}
and
\begin{equation}
   \left \| \sqrt{\rho } \theta  \right \|_{L^{2} }^{2}\le \bar{\rho } \left \| \theta  \right \|_{L^{2} }^{2}\le \bar{\rho } d^{2}\left \| \nabla \theta  \right \|_{L^{2} }^{2},
\end{equation}
where $d$ is the diameter of $\Omega $. Hence, we get
\begin{equation}\label{15}
\frac{\underline{\mu }  }{2\bar{\rho }d^{2}} \left \| \sqrt{\rho } \boldsymbol{u} \right \|_{L^{2} }^{2} \le \frac{\underline{\mu} }{2}  \left \| \nabla \boldsymbol{u} \right \|_{L^{2} }^{2},\quad  \frac{\kappa}{2\bar{\rho }d^{2}} \left \| \sqrt{\rho } \theta  \right \|_{L^{2} }^{2} \le \frac{\kappa }{2}  \left \| \nabla \theta  \right \|_{L^{2} }^{2}.
\end{equation}
Consequently, letting $\sigma : = \min\left \{ \frac{\underline{\mu }  }{2\bar{\rho }d^{2}},\frac{\kappa   }{2\bar{\rho }d^{2}} \right \}$, then we derive from $\eqref {14}$ and $\eqref {15}$ that
\begin{equation}\label{16}
\frac{d}{dt}  \left [ e^{\sigma t} \left ( \left \| \sqrt{\rho } \boldsymbol{u} \right \|_{L^{2} }^{2}+
 \left \| \sqrt{\rho } \theta  \right \| _{L^{2} }^{2} \right )  \right ] +e^{\sigma t} \left ( \underline{\mu }\left \| \nabla \boldsymbol{u} \right \|_{L^{2} }^{2} +
\kappa \left \| \nabla \theta  \right \| _{L^{2} }^{2} \right ) \le 0.
\end{equation}
Therefore, integrating $\eqref {16}$ with respect to $t$ gives the desired \eqref {Exponential 1}.
\end{proof}{}

Next, we give the following several key a priori estimates on $(\rho, \boldsymbol{u}, \theta, P)$.
\begin{proposition} \label{prop1}
There exists some positive constant $\varepsilon_0$ depending on $\Omega, q, \underline{\mu}, \bar{\mu}, \kappa, \bar{\rho}$, \\ $\| \nabla \mu(\rho_{0})\|_{L^{q}}, \|\nabla \boldsymbol{u}_{0}\|_{L^{2}}$ and $\|\nabla \theta_{0}\| _{L^{2}}$ such that if $\left (\rho, \boldsymbol{u}, \theta \right)$ is a strong solution to $\eqref{Benard}-\eqref{boundary}$ on $\Omega \times (0,T]$ satisfying
\begin{equation}\label{17}
\sup_{[ 0,T]} \| \nabla \mu \left ( \rho   \right )\|  _{L^{q} }\le 4\left \| \nabla \mu \left ( \rho _{0}  \right )  \right \|  _{L^{q} }, \quad 
\int_{0}^{T} \| \nabla \boldsymbol{u} \|_{L^{2} }^{4}dt\le 2 m_{0}^{\frac{1}{3} } ,
\end{equation}
then the following estimates hold
\begin{equation} \label{ls0005}
\sup_{[0,T]} \left \| \nabla \mu \left ( \rho   \right )  \right \|  _{L^{q} }\le 2\left \| \nabla \mu \left ( \rho _{0}  \right )  \right \|  _{L^{q} }, \quad 
\int_{0}^{T} \left \| \nabla \boldsymbol{u} \right \|_{L^{2} }^{4}dt\le m_{0}^{\frac{1}{3} },
\end{equation}
provided that 
\begin{equation} \label{001}
m_{0} <  \min \left \{ \varepsilon _{0},  \frac{\underline{\mu } \kappa }{C_{1}^{2} \bar{\rho } ^{\frac{2}{3} }  }\right \}.
\end{equation}
\end{proposition}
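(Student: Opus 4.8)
The plan is to run a continuity (bootstrap) argument: granting the a priori bounds \eqref{17} on $[0,T]$, I will show that both constants improve, from $4$ to $2$ and from $2$ to $1$, the gain being paid for by the smallness of $m_0$. The two conclusions in \eqref{ls0005} can be treated independently, and each is built on top of the elementary energy estimate \eqref{elementary} of Lemma \ref{lem1}, which already supplies $\int_0^T(\|\nabla\boldsymbol{u}\|_{L^2}^2+\|\nabla\theta\|_{L^2}^2)\,dt\le Cm_0^{2/3}$ uniformly in $T$.

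For the bound on $\nabla\mu(\rho)$ I would exploit that $\mu(\rho)$ is transported along the flow, $\partial_t\mu(\rho)+\boldsymbol{u}\cdot\nabla\mu(\rho)=0$, so that $\nabla\mu(\rho)$ obeys $\partial_t\nabla\mu(\rho)+\boldsymbol{u}\cdot\nabla\big(\nabla\mu(\rho)\big)+\nabla\boldsymbol{u}\cdot\nabla\mu(\rho)=0$. Testing against $|\nabla\mu(\rho)|^{q-2}\nabla\mu(\rho)$ and using $\mathrm{div}\,\boldsymbol{u}=0$ gives $\frac{d}{dt}\|\nabla\mu(\rho)\|_{L^q}\le C\|\nabla\boldsymbol{u}\|_{L^\infty}\|\nabla\mu(\rho)\|_{L^q}$, whence Gr\"onwall's inequality yields
\begin{equation*}
\sup_{[0,T]}\|\nabla\mu(\rho)\|_{L^q}\le\|\nabla\mu(\rho_0)\|_{L^q}\exp\Big(C\int_0^T\|\nabla\boldsymbol{u}\|_{L^\infty}\,dt\Big).
\end{equation*}
Improving $4\to2$ therefore reduces to making $\int_0^T\|\nabla\boldsymbol{u}\|_{L^\infty}\,dt\le C^{-1}\ln2$, which I aim to bound by a positive power of $m_0$. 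To control $\|\nabla\boldsymbol{u}\|_{L^\infty}$ I would read $\eqref{Benard1}_2$ as the Stokes system \eqref{stokes} with $\mathbf{F}=\rho\theta\boldsymbol{e}_3-\rho\boldsymbol{u}_t-\rho\boldsymbol{u}\cdot\nabla\boldsymbol{u}$, and combine the Gagliardo--Nirenberg inequality \eqref{GN2} (with $g=\nabla\boldsymbol{u}$) with the Stokes estimate \eqref{s2}; since \eqref{17} keeps the factor $1+\|\nabla\mu(\rho)\|_{L^q}^{q(5r-6)/2r(q-3)}$ bounded, this gives $\|\nabla\boldsymbol{u}\|_{L^\infty}\le C\|\mathbf{F}\|_{L^r}$ modulo lower order terms, reducing everything to a time integral of $\|\mathbf{F}\|_{L^r}$.

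For the bound on $\int_0^T\|\nabla\boldsymbol{u}\|_{L^2}^4\,dt$ I would test $\eqref{Benard1}_2$ with $\boldsymbol{u}_t$. Using that the pressure term drops because $\mathrm{div}\,\boldsymbol{u}_t=0$ and that $\mu(\rho)_t=-\boldsymbol{u}\cdot\nabla\mu(\rho)$, this produces a differential inequality of the form
\begin{equation*}
\frac{d}{dt}\int\mu(\rho)|\mathfrak{D}(\boldsymbol{u})|^2\,dx+c\|\sqrt{\rho}\boldsymbol{u}_t\|_{L^2}^2\le C\|\nabla\boldsymbol{u}\|_{L^2}^4\,\|\nabla\boldsymbol{u}\|_{L^2}^2+C\|\sqrt{\rho}\theta\|_{L^2}^2,
\end{equation*}
in which $C\|\nabla\boldsymbol{u}\|_{L^2}^4$ plays the role of the Gr\"onwall coefficient and its time integral is precisely the quantity bounded in \eqref{17}. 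Since $\int\mu(\rho)|\mathfrak{D}(\boldsymbol{u})|^2\,dx$ is comparable to $\|\nabla\boldsymbol{u}\|_{L^2}^2$ and the time integral of the buoyancy term $\|\sqrt{\rho}\theta\|_{L^2}^2$ is bounded through \eqref{elementary}, this controls $\sup_{[0,T]}\|\nabla\boldsymbol{u}\|_{L^2}^2$ and $\int_0^T\|\sqrt{\rho}\boldsymbol{u}_t\|_{L^2}^2\,dt$ by a constant depending only on the data. Feeding this back,
\begin{equation*}
\int_0^T\|\nabla\boldsymbol{u}\|_{L^2}^4\,dt\le\Big(\sup_{[0,T]}\|\nabla\boldsymbol{u}\|_{L^2}^2\Big)\int_0^T\|\nabla\boldsymbol{u}\|_{L^2}^2\,dt\le Cm_0^{2/3}\le m_0^{1/3},
\end{equation*}
as soon as $m_0$ is small enough that $Cm_0^{1/3}\le1$; choosing $\varepsilon_0$ accordingly closes the second bound in \eqref{ls0005}.

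The main obstacle is exactly the point flagged in the introduction: without a compatibility condition one cannot make $\|\sqrt{\rho}\boldsymbol{u}_t\|_{L^2}$, and hence the $\rho\boldsymbol{u}_t$ contribution to $\mathbf{F}$, small in $m_0$; only its time integral is controlled. Consequently the naive estimate merely bounds, rather than shrinks, $\int_0^T\|\nabla\boldsymbol{u}\|_{L^\infty}\,dt$, which is too weak to beat the exponential and recover the constant $2$. Circumventing this requires the special structure of the momentum equation $\eqref{Benard1}_2$ (the delicate estimate recorded as Lemma \ref{lem4}) to squeeze an extra power of $m_0$ out of the time integral of $\|\mathbf{F}\|_{L^r}$; this is the step I expect to be the crux of the argument. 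Once it is in place, combining it with the energy bounds above and choosing $\varepsilon_0$ (together with the constraint \eqref{001}) small enough simultaneously improves both constants, completing the proof.
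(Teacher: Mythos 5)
Your overall architecture coincides with the paper's: you transport $\nabla\mu(\rho)$ along the flow and apply Gr\"onwall, reducing the first half of \eqref{ls0005} to smallness of $\int_0^T\|\nabla\boldsymbol{u}\|_{L^\infty}\,dt$ in a power of $m_0$; and for the second half you test $\eqref{Benard1}_2$ with $\boldsymbol{u}_t$, use $\int_0^T\|\nabla\boldsymbol{u}\|_{L^2}^4\,dt\le 2m_0^{1/3}$ from \eqref{17} as the Gr\"onwall coefficient, and close via $\int_0^T\|\nabla\boldsymbol{u}\|_{L^2}^4\,dt\le\bigl(\sup_{[0,T]}\|\nabla\boldsymbol{u}\|_{L^2}^2\bigr)\int_0^T\|\nabla\boldsymbol{u}\|_{L^2}^2\,dt\le C m_0^{2/3}\le m_0^{1/3}$. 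This matches Lemmas \ref{lem2} and \ref{lem4} and the paper's concluding argument, up to the routine caveat that the paper runs the velocity and temperature energy estimates coupled (the convective terms, bounded through the Stokes estimate \eqref{27} and the elliptic estimate \eqref{28}, bring in $\|\nabla\theta\|_{H^1}$ and $\|\sqrt{\rho}\theta_t\|_{L^2}$), so your decoupled differential inequality is a harmless simplification of \eqref{29}.

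However, there is a genuine gap at precisely the decisive step, and you acknowledge it yourself: you never prove that $\int_0^T\|\nabla\boldsymbol{u}\|_{L^\infty}\,dt$ is small in $m_0$, only that it is bounded, and you defer the mechanism to ``the special structure of the momentum equation'' without supplying it. The paper's resolution has two ingredients absent from your attempt. First, smallness of the $\rho\boldsymbol{u}_t$ contribution to $\|\mathbf{F}\|_{L^r}$ is \emph{not} obtained by shrinking $\|\sqrt{\rho}\boldsymbol{u}_t\|_{L^2}$ (impossible without a compatibility condition, as you correctly note), but by peeling a factor of the density off via H\"older, $\|\rho\boldsymbol{u}_t\|_{L^r}\le\|\sqrt{\rho}\|_{L^{12r/(6-r)}}\|\sqrt{\rho}\boldsymbol{u}_t\|_{L^{12r/(6+r)}}$, and interpolating $\|\rho_0\|_{L^{12r/(6-r)}}$ between $L^1$ and $L^\infty$ to harvest the explicit factor $m_0^{\frac12(\frac1r-\frac16)}$, the remaining norm being interpolated between $\|\sqrt{\rho}\boldsymbol{u}_t\|_{L^2}$ and $\|\nabla\boldsymbol{u}_t\|_{L^2}$ (see \eqref{55}). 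Second, to integrate the resulting quantity in time one needs the time-weighted estimates of Lemma \ref{lem3}, namely $\sup_{[0,T]}t^{i}\|\sqrt{\rho}\boldsymbol{u}_t\|_{L^2}^2+\int_0^T t^{i}\|\nabla\boldsymbol{u}_t\|_{L^2}^2\,dt\le C$ for $i=1,2$, together with the splitting $t\le 1$ versus $t\ge 1$ with weights $t$ and $t^2$; the exponents $\frac{6-r}{4r}$ and $\frac{5r-6}{4r}$, with $3<r<\min\{4,q\}$, are tuned exactly so the leftover power of $t$ is integrable at $0$ and at infinity (\eqref{60}--\eqref{61}). Analogous weighted tricks render the $\rho\boldsymbol{u}\cdot\nabla\boldsymbol{u}$ and $\rho\theta$ contributions of size $m_0^{1/6}$ and $m_0^{1/3}$ (\eqref{0001}, \eqref{63}), yielding \eqref{GN6}. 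Without these two devices, your plan gives only $\int_0^T\|\nabla\boldsymbol{u}\|_{L^\infty}\,dt\le C$, which, as you yourself observe, cannot beat the Gr\"onwall exponential in \eqref{65} and recover the constant $2$; hence the first estimate in \eqref{ls0005} remains unproved in your attempt.
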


\noindent The proof of Proposition \ref{prop1} is consist of the following lemmas. We first derive the key estimates of $\left \| \nabla \boldsymbol{u}  \right \|_{L^{\infty }\left ( 0,T;L^{2}  \right )  }$ and $\left \| \nabla \theta   \right \|_{L^{\infty }\left ( 0,T;L^{2}  \right )}$.

\begin{lemma} \label{lem2}
Under the conditions \eqref{small1} and \eqref {17}, there exists a positive constant $C$ depending only on $\Omega, q, \underline{\mu}, \bar{\mu}, \kappa, \bar{\rho}, \| \nabla \mu (\rho_{0})\|_{L^{q}}, \| \nabla \boldsymbol{u}_{0}\|_{L^{2}}$ and $\|\nabla \theta_{0} \|_{L^{2}}$, such that
\begin{equation}\label{GN3}
\sup_{[0,T]} \left ( \left \| \nabla \boldsymbol{u} \right \|_{L^{2} }^{2}+\left \| \nabla \theta  \right \| _{L^{2} }^{2} \right ) +  \int_{0}^{T}\left (   \left \| \sqrt{\rho } \boldsymbol{u}_{t}  \right \|_{L^{2} }^{2}+
 \left \| \sqrt{\rho } \theta_{t}   \right \| _{L^{2} }^{2}\right ) dt\le C.
\end{equation}
Furthermore, for $i=\left \{ 1,2 \right \}$ and $ \sigma $ as in the $\eqref {lem1}$, one has
\begin{equation}\label{GN4}
\sup_{[0,T]} \left [ t^{i} \left ( \left \| \nabla  \boldsymbol{u} \right \|_{L^{2} }^{2}+\left \| \nabla  \theta  \right \| _{L^{2} }^{2} \right ) \right ]+\int_{0}^{T} t^{i} \left (  \left \| \sqrt{\rho } \boldsymbol{u}_{t}  \right \|_{L^{2} }^{2}+\left \| \sqrt{\rho } \theta_{t}   \right \| _{L^{2} }^{2} \right ) dt\le Cm_{0}^{\frac{2}{3} },
\end{equation}
\begin{equation}\label{GN5}
\sup_{[0,T]} \left [e^{\sigma t}  \left ( \left \| \nabla  \boldsymbol{u} \right \|_{L^{2} }^{2}+\left \| \nabla  \theta  \right \| _{L^{2} }^{2} \right ) \right ]+\int_{0}^{T} e^{\sigma t} \left (  \left \| \sqrt{\rho } \boldsymbol{u}_{t}  \right \|_{L^{2} }^{2}+\left \| \sqrt{\rho } \theta_{t}   \right \| _{L^{2} }^{2} \right ) dt\le C.
\end{equation}
\end{lemma}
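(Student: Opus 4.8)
The plan is to run a single first–order energy estimate for the functional
\[
\Psi(t) \triangleq \int \mu(\rho)\,|\mathfrak{D}(\boldsymbol{u})|^2\,dx + \frac{\kappa}{2}\|\nabla \theta\|_{L^2}^2,
\]
which, by \eqref{mu} and the identity $2\|\mathfrak{D}(\boldsymbol{u})\|_{L^2}^2=\|\nabla\boldsymbol{u}\|_{L^2}^2$, is equivalent to $\|\nabla\boldsymbol{u}\|_{L^2}^2+\|\nabla\theta\|_{L^2}^2$. First I would test $\eqref{Benard1}_2$ with $\boldsymbol{u}_t$ and $\eqref{Benard1}_3$ with $\theta_t$, integrate by parts, and use $\mathrm{div}\,\boldsymbol{u}=\mathrm{div}\,\boldsymbol{u}_t=0$ to annihilate the pressure. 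Differentiating the viscous term in time gives $\frac{d}{dt}\int\mu(\rho)|\mathfrak{D}(\boldsymbol{u})|^2$ plus a remainder $\int\mu'(\rho)\rho_t|\mathfrak{D}(\boldsymbol{u})|^2$, and substituting $\eqref{Benard1}_1$ rewrites this remainder as $-\int(\boldsymbol{u}\cdot\nabla\mu(\rho))|\mathfrak{D}(\boldsymbol{u})|^2$ since $\nabla\mu(\rho)=\mu'(\rho)\nabla\rho$. Adding the two tested equations yields
\[
\frac{d}{dt}\Psi + \|\sqrt{\rho}\,\boldsymbol{u}_t\|_{L^2}^2 + \|\sqrt{\rho}\,\theta_t\|_{L^2}^2 = \mathcal{R},
\]
where $\mathcal{R}$ collects the convective terms $-\int\rho(\boldsymbol{u}\cdot\nabla\boldsymbol{u})\cdot\boldsymbol{u}_t-\int\rho(\boldsymbol{u}\cdot\nabla\theta)\theta_t$, the buoyancy terms $\int\rho\theta\,\boldsymbol{e}_3\cdot\boldsymbol{u}_t+\int\rho(\boldsymbol{u}\cdot\boldsymbol{e}_3)\theta_t$, and the density–viscosity term $-\int(\boldsymbol{u}\cdot\nabla\mu(\rho))|\mathfrak{D}(\boldsymbol{u})|^2$.

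Next I would estimate $\mathcal{R}$ so that it is dominated by $\tfrac12(\|\sqrt{\rho}\boldsymbol{u}_t\|_{L^2}^2+\|\sqrt{\rho}\theta_t\|_{L^2}^2)+C\|\nabla\boldsymbol{u}\|_{L^2}^4\,\Psi + C\,h$, where $h \triangleq \|\sqrt{\rho}\boldsymbol{u}\|_{L^2}^2+\|\sqrt{\rho}\theta\|_{L^2}^2+\|\nabla\boldsymbol{u}\|_{L^2}^2+\|\nabla\theta\|_{L^2}^2$. The buoyancy terms are immediate by Cauchy--Schwarz and Young. For the velocity convection term, $|\int\rho(\boldsymbol{u}\cdot\nabla\boldsymbol{u})\cdot\boldsymbol{u}_t|\le\tfrac14\|\sqrt{\rho}\boldsymbol{u}_t\|_{L^2}^2+\bar{\rho}\|\boldsymbol{u}\|_{L^6}^2\|\nabla\boldsymbol{u}\|_{L^3}^2$; Gagliardo--Nirenberg \eqref{GN1} gives $\|\boldsymbol{u}\|_{L^6}^2\|\nabla\boldsymbol{u}\|_{L^3}^2\le C\|\nabla\boldsymbol{u}\|_{L^2}^3\|\boldsymbol{u}\|_{H^2}$. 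Here the Stokes estimate \eqref{s1} applied to the force $-\rho\boldsymbol{u}_t-\rho\boldsymbol{u}\cdot\nabla\boldsymbol{u}+\rho\theta\boldsymbol{e}_3$ is decisive: because assumption \eqref{17} keeps $\|\nabla\mu(\rho)\|_{L^q}\le4\|\nabla\mu(\rho_0)\|_{L^q}$, the factor $(1+\|\nabla\mu(\rho)\|_{L^q}^{q/(q-3)})$ is a constant, so after a Young absorption $\|\boldsymbol{u}\|_{H^2}\le C(\|\sqrt{\rho}\boldsymbol{u}_t\|_{L^2}+\|\nabla\boldsymbol{u}\|_{L^2}^3+\|\sqrt{\rho}\theta\|_{L^2})$. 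Feeding this back and applying Young produces $C\|\nabla\boldsymbol{u}\|_{L^2}^6$ together with absorbable multiples of $\|\sqrt{\rho}\boldsymbol{u}_t\|_{L^2}^2$ and $\|\sqrt{\rho}\theta\|_{L^2}^2$, and writing $\|\nabla\boldsymbol{u}\|_{L^2}^6=\|\nabla\boldsymbol{u}\|_{L^2}^4\cdot\|\nabla\boldsymbol{u}\|_{L^2}^2$ gives exactly the Gr\"onwall form. The temperature convection term is handled identically via the elliptic bound $\|\theta\|_{H^2}\le C(\|\sqrt{\rho}\theta_t\|_{L^2}+\|\nabla\boldsymbol{u}\|_{L^2}^2\|\nabla\theta\|_{L^2}+\|\nabla\boldsymbol{u}\|_{L^2})$ for $\eqref{Benard1}_3$. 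Finally the density–viscosity term, which has no constant-viscosity analogue, is bounded by $\|\boldsymbol{u}\|_{L^6}\|\nabla\mu(\rho)\|_{L^q}\|\mathfrak{D}(\boldsymbol{u})\|_{L^{2p}}^2$ with $\tfrac16+\tfrac1q+\tfrac1p=1$; \eqref{GN1} and Young yield $\delta\|\boldsymbol{u}\|_{H^2}^2+C\|\nabla\boldsymbol{u}\|_{L^2}^m$ with $m\in[10/3,6]$, and since $\|\nabla\boldsymbol{u}\|_{L^2}^m\le C(\|\nabla\boldsymbol{u}\|_{L^2}^2+\|\nabla\boldsymbol{u}\|_{L^2}^6)$ on this range this again fits the target form after the $\delta\|\boldsymbol{u}\|_{H^2}^2$ is reabsorbed through \eqref{s1}.

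Collecting these bounds gives $\frac{d}{dt}\Psi+\tfrac12(\|\sqrt{\rho}\boldsymbol{u}_t\|_{L^2}^2+\|\sqrt{\rho}\theta_t\|_{L^2}^2)\le C\|\nabla\boldsymbol{u}\|_{L^2}^4\,\Psi+C\,h$. Lemma \ref{lem1} supplies $\int_0^T h\,dt\le Cm_0^{2/3}$ (using Poincar\'e on the $\sqrt{\rho}$-terms and \eqref{elementary}), while \eqref{17} supplies the \emph{integrable} coefficient $\int_0^T\|\nabla\boldsymbol{u}\|_{L^2}^4\,dt\le 2m_0^{1/3}$. Gr\"onwall's inequality then gives $\sup_{[0,T]}\Psi\le(\Psi(0)+Cm_0^{2/3})\exp(2Cm_0^{1/3})\le C$, since $m_0$ is small and $\Psi(0)\le C(\|\nabla\boldsymbol{u}_0\|_{L^2}^2+\|\nabla\theta_0\|_{L^2}^2)$; integrating the differential inequality yields \eqref{GN3}. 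For \eqref{GN4} and \eqref{GN5} I would multiply the same inequality by $t^i$ ($i=1,2$), respectively $e^{\sigma t}$, and integrate. With $\sup_{[0,T]}\Psi\le C$ now known, the nonlinear term obeys $\int t^i\|\nabla\boldsymbol{u}\|_{L^2}^4\Psi\le C\sup\|\nabla\boldsymbol{u}\|_{L^2}^2\int e^{\sigma t}\|\nabla\boldsymbol{u}\|_{L^2}^2\le Cm_0^{2/3}$ via \eqref{Exponential 1} (and similarly with $e^{\sigma t}$), while the $h$-integrals and the commutator terms $it^{i-1}\Psi$, $\sigma e^{\sigma t}\Psi$ are all $\le Cm_0^{2/3}$ through \eqref{elementary}--\eqref{Exponential 1} and the elementary fact $t^i\le Ce^{\sigma t}$. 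Since the weight $t^i$ vanishes at $t=0$ no initial contribution survives, giving the factor $m_0^{2/3}$ in \eqref{GN4}, whereas the term $\Psi(0)=O(1)$ persists in \eqref{GN5} and produces the bound $C$ there.

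The main obstacle is closing the velocity estimate: the convective term and the density–viscosity term both genuinely require the $H^2$-bound from \eqref{s1}, so one must arrange that every occurrence of $\|\boldsymbol{u}\|_{H^2}$ is reabsorbed (through a small $\delta$ and Young) into $\|\sqrt{\rho}\boldsymbol{u}_t\|_{L^2}^2$ and into purely $\|\nabla\boldsymbol{u}\|_{L^2}$-polynomial remainders, and that the dominant remainder is written as $\|\nabla\boldsymbol{u}\|_{L^2}^4\cdot\Psi$. The delicate point is that $\Psi$ itself is \emph{not} small — only the initial mass is — so the argument must route the nonlinearity so that it is the hypothesis $\int_0^T\|\nabla\boldsymbol{u}\|_{L^2}^4\,dt\le 2m_0^{1/3}$ in \eqref{17}, together with $\|\nabla\mu(\rho)\|_{L^q}\le 4\|\nabla\mu(\rho_0)\|_{L^q}$, that tames the Gr\"onwall factor; the density–viscosity term $\int(\boldsymbol{u}\cdot\nabla\mu(\rho))|\mathfrak{D}(\boldsymbol{u})|^2$ is precisely where the $L^q$-control of $\nabla\mu(\rho)$ is indispensable.
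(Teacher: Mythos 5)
Your proposal is correct and takes essentially the same route as the paper's proof of Lemma \ref{lem2}: testing $\eqref{Benard1}_2$ and $\eqref{Benard1}_3$ with $\boldsymbol{u}_t$ and $\theta_t$, converting the viscous time-derivative remainder into $-\int(\boldsymbol{u}\cdot\nabla\mu(\rho))|\mathfrak{D}(\boldsymbol{u})|^2\,dx$ via the transport equation for $\mu(\rho)$, closing through the Stokes estimate \eqref{s1} (with constant uniform thanks to \eqref{17}) and elliptic regularity for $\theta$, then Gr\"onwalling against $\int_0^T\|\nabla\boldsymbol{u}\|_{L^2}^4\,dt\le 2m_0^{1/3}$ and finally weighting by $t^i$ and $e^{\sigma t}$ exactly as in \eqref{31}--\eqref{33}. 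The only deviation is cosmetic: you allow general H\"older exponents in the density--viscosity term where the paper fixes the $L^3$--$L^6$--$L^4$ splitting of \eqref{24}.
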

\begin{proof}
Since $\mu \left ( \rho  \right )$ is a continuously differentiable function of $\rho$, it is easy to deduce from $\eqref {Benard1}_1$ that
\begin{equation}\label{18}
\left [ \mu \left ( \rho  \right )  \right ] _{t} +\boldsymbol{u} \cdot \nabla \mu \left ( \rho  \right )=0.
\end{equation}
Multiplying $\eqref{Benard1}_2 $ by $\boldsymbol{u}_{t} $ and integrating over $\Omega$, we get
\begin{equation}\label{19}
2\int \mu \left ( \rho  \right ) \mathfrak{D}(\boldsymbol{u}): \nabla \boldsymbol{u}_{t}  dx+\int \rho \left |\boldsymbol{u}_{t} \right | ^{2}dx=-\int \rho \boldsymbol{u}\cdot \nabla \boldsymbol{u} \cdot \boldsymbol{u}_{t} dx + \int \rho \theta (\boldsymbol{e}_{3} \cdot \boldsymbol{u}_{t})dx.
\end{equation}
Noting that
\begin{equation}
2\int \mu \left ( \rho  \right ) \mathfrak{D}(\boldsymbol{u}): \nabla \boldsymbol{u}_{t}  dx
=2\int \mu \left ( \rho  \right ) \mathfrak{D}(\boldsymbol{u}): \mathfrak{D}(\boldsymbol{u})_{t}  dx
=\frac{d}{dt} \int \mu \left ( \rho  \right ) \left | \mathfrak{D}(\boldsymbol{u}) \right | ^{2} dx
-\int \mu \left ( \rho  \right )_{t} \left | \mathfrak{D}(\boldsymbol{u}) \right | ^{2} dx,
\end{equation}
which combined with \eqref{18} and \eqref{19} implies
\begin{equation}\label{20}
\begin{aligned}
&\frac{d}{dt} \int \mu \left ( \rho  \right ) \left | \mathfrak{D}(\boldsymbol{u}) \right | ^{2} dx+
\int \rho \left |\boldsymbol{u}_{t} \right | ^{2}dx\\
= & -\int \rho\boldsymbol{u}\cdot \nabla \boldsymbol{u}\cdot \boldsymbol{u}_{t} dx
-\int \boldsymbol{u}\cdot \nabla \mu \left ( \rho  \right ) \left | \mathfrak{D}(\boldsymbol{u})\right | ^{2} dx+
\int\rho\theta \left ( \boldsymbol{u}_{t}\cdot \boldsymbol{e}_{3}  \right ) dx.
\end{aligned}
\end{equation}
Multiplying $\eqref{Benard1}_3$ by $\theta_{t}$ and integrating over $\Omega$, we have
\begin{equation}\label{21}
\frac{1}{2} \frac{d}{dt} \int \kappa \left | \nabla \theta  \right |^{2}   dx+
\int \rho \theta _{t}^{2}  dx
=-\int \rho(\boldsymbol{u}\cdot \nabla \theta) \theta _{t}  dx+
\int\rho\left ( \boldsymbol{u}\cdot \boldsymbol{e}_{3}   \right ) \theta _{t}  dx.
\end{equation}
Adding \eqref{20} and \eqref{21} together gives
\begin{equation}\label{22}
\begin{aligned}
&\frac{1}{2} \frac{d}{dt} \int \left ( 2\mu \left ( \rho  \right ) \left | \mathfrak{D}(\boldsymbol{u}) \right | ^{2}
+\kappa \left | \nabla \theta  \right |^{2} \right ) dx
+\left \| \sqrt{\rho } \boldsymbol{u}_{t}  \right \|_{L^{2} }^{2}
+\left \| \sqrt{\rho } \theta_{t}   \right \| _{L^{2} }^{2}\\
= & -\int \rho\boldsymbol{u}\cdot \nabla \boldsymbol{u}\cdot \boldsymbol{u}_{t} dx
-\int \rho\boldsymbol{u}\cdot \nabla \theta \cdot \theta _{t} dx\\
&-\int \boldsymbol{u}\cdot \nabla \mu \left ( \rho  \right ) \left | \mathfrak{D}(\boldsymbol{u}) \right | ^{2} dx
+\int \rho \theta (\boldsymbol{u}_{t} \cdot \boldsymbol{e}_3) dx
+\int\rho\left ( \boldsymbol{u} \cdot \boldsymbol{e}_{3}   \right ) \theta _{t}  dx\\
\triangleq & I_{1} +I_{2}+I_{3}+I_{4}+I_{5}.
\end{aligned}
\end{equation}
By H\"older's inequality and Sobolev's inequality, we have
\begin{equation}\label{23}
\begin{aligned}
I_{1}  +I_{2} 
\le & \frac{1}{4} \left \| \sqrt{\rho } \boldsymbol{u}_{t}  \right \|_{L^{2} }^{2}
+\frac{1}{4} \left \| \sqrt{\rho } \theta_{t}   \right \| _{L^{2} }^{2}
+C\left \| \sqrt{\rho  }\boldsymbol{u} \cdot \nabla \boldsymbol{u} \right \| _{L^{2} }^{2}
+C\left \| \sqrt{\rho  } \boldsymbol{u}\cdot \nabla \theta  \right \| _{L^{2} }^{2}\\
\le &  \frac{1}{4} \left \| \sqrt{\rho } \boldsymbol{u}_{t}  \right \|_{L^{2} }^{2}
+\frac{1}{4} \left \| \sqrt{\rho }  \theta_{t}   \right \| _{L^{2} }^{2}
+C \left \| \boldsymbol{u} \right \| _{L^{6} }^{2}
\left \| \nabla \boldsymbol{u} \right \|_{L^{3} }^{2}
+ C \left \| \boldsymbol{u} \right \| _{L^{6} }^{2}
\left \| \nabla \theta \right \|_{L^{3} }^{2}\\
\le & \frac{1}{4} \left \| \sqrt{\rho } \boldsymbol{u}_{t}  \right \|_{L^{2} }^{2}
+\frac{1}{4} \left \| \sqrt{\rho } \theta_{t}   \right \| _{L^{2} }^{2}
+C \left \| \nabla\boldsymbol{u} \right \|_{L^{2} }^{2}
\left \| \nabla\boldsymbol{u} \right \|_{L^{2} }
\left \| \nabla\boldsymbol{u} \right \|_{H^{1} }
+C \left \| \nabla\boldsymbol{u} \right \|_{L^{2} }^{2}
\left \| \nabla\theta  \right \|_{L^{2} }
\left \| \nabla\theta  \right \|_{H^{1} }\\
\le & \frac{1}{4} \left \| \sqrt{\rho } \boldsymbol{u}_{t}  \right \|_{L^{2} }^{2}
+\frac{1}{4} \left \| \sqrt{\rho } \theta_{t}   \right \| _{L^{2} }^{2}
+C \left \| \nabla\boldsymbol{u} \right \|_{L^{2} }^{3}
\left \| \nabla\boldsymbol{u} \right \|_{H^{1} }+C \left \| \nabla\boldsymbol{u} \right \|_{L^{2} }^{2}
\left \| \nabla\theta  \right \|_{L^{2} }
\left \| \nabla\theta  \right \|_{H^{1} }.
\end{aligned}
\end{equation}
Remarking \eqref{17}, and applying Sobolev's inequality, we get
\begin{equation}\label{24}
\begin{aligned}
I_{3}&=-\int \boldsymbol{u}\cdot \nabla \mu \left ( \rho  \right ) \left | \mathfrak{D}(\boldsymbol{u}) \right | ^{2} dx \\
 & \le C\left \|\nabla \mu \left ( \rho  \right )  \right \|_{L^{3} }
  \left \| \boldsymbol{u} \right \|_{L^{6} }
\left \| \mathfrak{D}(\boldsymbol{u}) \right \| _{L^{4} }^{2} \\
& \le C \left \| \nabla\boldsymbol{u}  \right \|_{L^{2} }
\left \| \nabla\boldsymbol{u}  \right \|_{L^{2}}^{\frac{1}{2} }
 \left \| \nabla\boldsymbol{u}  \right \|_{L^{6}}^{\frac{3}{2} } \\
& \le C  \left \| \nabla\boldsymbol{u}  \right \|_{L^{2}}^{\frac{3}{2} }
  \left \| \nabla\boldsymbol{u}  \right \|_{H^{1}}^{\frac{3}{2} }.
\end{aligned}
\end{equation}
Similarly, by H\"older's inequality and the Cauchy-Schwarz inequality, one has
\begin{equation}\label{25}
\begin{aligned}
I_{4}+I_{5}&\le C\left \| \sqrt{\rho } \boldsymbol{u}_{t}  \right \|_{L^{2} }
\left \| \sqrt{\rho } \theta  \right \| _{L^{2} }
+C\left \| \sqrt{\rho } \theta _{t}  \right \|_{L^{2} }
\left \| \sqrt{\rho }  \boldsymbol{u} \right \| _{L^{2} }\\
&\le \frac{1}{4} \left \| \sqrt{\rho } \boldsymbol{u}_{t}  \right \|_{L^{2} }^{2} + \frac{1}{4}
 \left \| \sqrt{\rho } \theta_{t}   \right \| _{L^{2} }^{2} 
+C\left \| \sqrt{\rho } \boldsymbol{u}  \right \|_{L^{2} }^{2}
+C\left \| \sqrt{\rho } \theta   \right \| _{L^{2} }^{2}.
\end{aligned}
\end{equation}
Substituting \eqref{23}-\eqref{25} into $\eqref{22}$, we arrive at
\begin{equation}\label{26}
\begin{aligned}
&\frac{d}{dt} \int \left ( 2\mu \left ( \rho  \right ) \left | \mathfrak{D}(\boldsymbol{u}) \right | ^{2}
+\kappa \left | \nabla \theta  \right |^{2} \right ) dx
+\left \| \sqrt{\rho } \boldsymbol{u}_{t}  \right \|_{L^{2} }^{2}+
 \left \| \sqrt{\rho } \theta_{t}   \right \| _{L^{2} }^{2}\\
\le & C\left \| \nabla \boldsymbol{u} \right \|_{L^{2} }^{\frac{3}{2} }
\left \| \nabla \boldsymbol{u} \right \|_{H^{1} }^{\frac{3}{2} }
+C\left \| \nabla \boldsymbol{u} \right \|_{L^{2} }^{3}
\left \| \nabla \boldsymbol{u} \right \|_{H^{1} } +C\left \| \nabla \boldsymbol{u} \right \|_{L^{2} }^{2}
\left \| \nabla \theta  \right \| _{L^{2} } 
\left \| \nabla \theta  \right \| _{H^{1} } \\
& + C\left \| \sqrt{\rho } \boldsymbol{u}  \right \|_{L^{2} }^{2}+
 C\left \| \sqrt{\rho } \theta  \right \| _{L^{2} }^{2}.
\end{aligned}
\end{equation}
It follows from Lemma \ref{stokeseq} with $\mathbf{F}=-\rho \boldsymbol{u}_{t}-\rho \boldsymbol{u}\cdot \nabla \boldsymbol{u}+\rho \boldsymbol{u}\cdot \boldsymbol{e}_{3}$ and the Gagliardo-Nirenberg inequality that
\begin{equation}
\begin{aligned}
\left \| \nabla \boldsymbol{u} \right \|_{H^{1} } + \|\nabla P\|_{L^2} &\le C\left \| \rho \boldsymbol{u}_{t}  \right
\|_{L^{2} }+C\left \| \rho \boldsymbol{u}\cdot \nabla\boldsymbol {u}  \right
\|_{L^{2} }+C\left \| \rho \theta \boldsymbol{e}_{3}   \right \|_{L^{2} }\\
&\le C\left \|\sqrt{\rho}\boldsymbol{u}_{t}  \right \|_{L^{2} }+C\left \|\boldsymbol {u}
\right \|_{L^{6} }\left \| \nabla \boldsymbol{u} \right \|_{L^{3} } +C \left \| \sqrt{\rho }\theta
 \right \|_{L^{2} }\\
&\le C\left \|\sqrt{\rho}\boldsymbol{u}_{t}  \right \|_{L^{2} }+C\left \|\nabla \boldsymbol {u}
\right \|_{L^{2} }^{\frac{3}{2} } \left \| \nabla \boldsymbol{u} \right \|_{H^{1} }^{\frac{1}{2} }
+C \left \| \sqrt{\rho }\theta \right \|_{L^{2} }\\
&\le \frac{1}{2} \left \|\nabla \boldsymbol {u} \right \|_{H^{1} }+C\left \|\sqrt{\rho}\boldsymbol{u}_{t}
\right \|_{L^{2} }+C\left \|\nabla \boldsymbol {u} \right \|_{L^{2} }^{3} +C \left \| \sqrt{\rho }\theta \right \|_{L^{2} },
\end{aligned}
\end{equation}
which gives rise to
\begin{equation} \label{27}
\left \| \nabla \boldsymbol{u} \right \|_{H^{1} } + \|\nabla P\|_{L^2}  \le C\left \| \sqrt{\rho} \boldsymbol{u}_{t}  \right
\|_{L^{2} }+C\left \| \nabla \boldsymbol{u} \right \|_{L^{2} }^{3} +C\left \| \sqrt{\rho} \theta   \right \|_{L^{2} }.
\end{equation}
Applying classical elliptic estimates for Equation $\eqref{Benard1}_3$ of $\theta$, we deduce from \eqref{rhobound} and the Gagliardo-Nirenberg inequality that
\begin{equation}
\begin{aligned}
\left \| \nabla \theta  \right \|_{H^{1} }&\le C\left \| \rho \theta _{t}  \right\|_{L^{2} }+C\left \| \rho \boldsymbol{u}\cdot \nabla\theta   \right\|_{L^{2} }+C\left \| \rho \boldsymbol{u} \cdot  \boldsymbol{e}_{3}   \right \|_{L^{2} }\\
&\le C\left \|\sqrt{\rho} \theta _{t}  \right \|_{L^{2} }+C\left \|\boldsymbol {u}
\right \|_{L^{6} }\left \| \nabla \theta  \right \|_{L^{3} } +C \left \|\sqrt{\rho }\boldsymbol{u}\right \|_{L^{2} }\\
&\le C\left \|\sqrt{\rho}\theta _{t}  \right \|_{L^{2} }+C\left \|\nabla \boldsymbol {u}\right \|_{L^{2} }\left \| \nabla \theta  \right \|_{L^{2} }^{\frac{1}{2} } \left \| \nabla \theta  \right \|_{H^{1} }^{\frac{1}{2} }+C \left \| \sqrt{\rho } \boldsymbol{u}\right \|_{L^{2} }\\
&\le \frac{1}{2} \left \|\nabla \theta  \right \|_{H^{1} } +C\left \|\sqrt{\rho}\theta _{t}\right \|_{L^{2} }+C\left \|\nabla \boldsymbol {u} \right \|_{L^{2} }^{2}\left \|\nabla \theta  \right \|_{L^{2} }  +C \left \|\sqrt{\rho }\boldsymbol {u}\right \|_{L^{2} },
\end{aligned}
\end{equation}
which implies 
\begin{equation} \label{28}
\left \| \nabla \theta \right \|_{H^{1} }\le C\left \| \sqrt{\rho} \theta _{t}  \right
\|_{L^{2} }+C\left \| \nabla \boldsymbol{u} \right \|_{L^{2} }^{2}\left \| \nabla \theta \right \|_{L^{2} }+
C\left \| \sqrt{\rho}\boldsymbol{u} \right \|_{L^{2} }.
\end{equation}
Substituting $\eqref{27}$ and $\eqref{28}$ into $\eqref{26}$, one has
\begin{equation}\label{29}
\begin{aligned}
 &\frac{d}{dt} \int \left ( 2\mu \left ( \rho  \right ) \left | \mathfrak{D}(\boldsymbol{u}) \right | ^{2}+\kappa \left | \nabla \theta  \right |^{2} \right ) dx+\left \| \sqrt{\rho } \boldsymbol{u}_{t}  \right \|_{L^{2} }^{2}+\left \| \sqrt{\rho } \theta_{t}   \right \| _{L^{2} }^{2}\\
 \le &  C\left \| \nabla \boldsymbol{u}  \right \|_{L^{2} }^{6}+C\left \| \nabla \boldsymbol{u}  \right \|_{L^{2} }^{4}\left \| \nabla \theta  \right \|_{L^{2} }^{2}+C\left \| \sqrt{\rho } \boldsymbol{u}  \right \|_{L^{2} }^{2}+C\left \| \sqrt{\rho } \theta   \right \| _{L^{2} }^{2}\\
\le &  C\left \| \nabla \boldsymbol{u}  \right \|_{L^{2} }^{4}\left ( \left \| \nabla \boldsymbol{u}  \right \|_{L^{2} }^{2}+\left \| \nabla \theta  \right \|_{L^{2} }^{2}\right ) +  C \| \sqrt{\rho } \boldsymbol{u}\|_{L^{2}}^{2} + C\| \sqrt{\rho } \theta \| _{L^{2} }^{2} \\
 \le &  C\left \| \nabla \boldsymbol{u}  \right \|_{L^{2} }^{4}\left ( \int \left (  2\mu \left ( \rho \right ) \left | \mathfrak{D}(\boldsymbol{u}) \right | ^{2}+\kappa \left | \nabla \theta  \right |^{2} \right )dx \right )+C\left \|\sqrt{\rho } \boldsymbol{u}  \right \|_{L^{2} }^{2}+C\left \| \sqrt{\rho } \theta   \right \| _{L^{2} }^{2}.
\end{aligned}
\end{equation}
Applying Gronwall's inequality, we obtain from \eqref{rhobound}, Poincar\'e's inequality, \eqref{17} and \eqref{elementary} that
\begin{equation}\label{30}
\begin{aligned}
&\sup_{[0,T]} \left ( \underline{\mu }\left \| \nabla \boldsymbol{u}
\right \|_{L^{2} }^{2} +\kappa \left \| \nabla \theta  \right \| _{L^{2} }^{2} \right )
+\int_{0}^{T}\left ( \left \| \sqrt{\rho } \boldsymbol{u}_{t}  \right \|_{L^{2} }^{2}+
 \left \| \sqrt{\rho } \theta_{t}   \right \| _{L^{2} }^{2}   \right ) dt\\
\le &  \left [ \left ( \bar{\mu } \left \| \nabla \boldsymbol{u} _{0}
\right \|_{L^{2} }^{2} +\kappa \left \| \nabla \theta _{0}  \right \| _{L^{2} }^{2} \right )
 +C\int_{0}^{T} \left ( \left \| \sqrt{\rho } \boldsymbol{u}  \right \|_{L^{2} }^{2}+
 \left \| \sqrt{\rho } \theta  \right \| _{L^{2} }^{2} \right )dt  \right  ]\exp\left \{ C\int_{0}^{T} \left \| \nabla \boldsymbol{u} \right \|_{L^{2} }^{4}dt \right \} \\
\le & \left ( \bar{\mu } \left \| \nabla \boldsymbol{u} _{0}
\right \|_{L^{2} }^{2} +\kappa \left \| \nabla \theta _{0}  \right \| _{L^{2} }^{2} \right )
 +C\int_{0}^{T} \left ( \left \| \nabla \boldsymbol{u} \right \|_{L^{2} }^{2} +
\left \| \nabla \theta  \right \| _{L^{2} }^{2} \right ) dt 
\le  C.
\end{aligned}
\end{equation}
Multiplying \eqref{29} by $t$, and noting \eqref{30} gives rise to
\begin{equation}\label{31}
\begin{aligned}
&\frac{d}{dt} \left [ t \int\left ( 2\mu \left ( \rho  \right ) \left | \mathfrak{D}(\boldsymbol{u})\right | ^{2}+\kappa \left | \nabla \theta  \right |^{2} \right )dx \right ] +t \left(  \| \sqrt{\rho } \boldsymbol{u}_{t} \|_{L^{2} }^{2}+  \| \sqrt{\rho } \theta_{t} \| _{L^{2} }^{2} \right) \\
\le &  C\left \| \nabla \boldsymbol{u}  \right \|_{L^{2} }^{4}
\left [t \int \left (  2\mu \left ( \rho \right ) \left | \mathfrak{D}(\boldsymbol{u}) \right | ^{2}+\kappa \left | \nabla \theta  \right |^{2} \right )dx \right ]+Ct\left (\left \|\sqrt{\rho } \boldsymbol{u}  \right \|_{L^{2} }^{2}
+\left \| \sqrt{\rho } \theta   \right \| _{L^{2} }^{2}  \right )\\
& + \int\left ( 2\mu \left ( \rho  \right ) \left | \mathfrak{D}(\boldsymbol{u}) \right | ^{2}
+\kappa \left | \nabla \theta  \right |^{2} \right )dx \\
\le & C t \left ( \|\nabla \boldsymbol{u}\|_{L^2}^2 + \| \nabla \theta\|_{L^2}^2 \right )  + C t\left (\left \|\sqrt{\rho } \boldsymbol{u}  \right \|_{L^{2} }^{2}
+\left \| \sqrt{\rho } \theta   \right \| _{L^{2} }^{2}  \right ) +  C \left ( \|\nabla \boldsymbol{u}\|_{L^2}^2 + \| \nabla \theta\|_{L^2}^2 \right ). 
\end{aligned}
\end{equation}
For $\sigma $ as in Lemma \ref{lem1}, we derive from the Poincar\'e inequality and \eqref{Exponential 1} that
\begin{equation}\label{32}
\begin{aligned}
\int_{0}^{T} t\left ( \left \| \sqrt{\rho } \boldsymbol{u}  \right \|_{L^{2} }^{2}+
 \left \| \sqrt{\rho } \theta \right \| _{L^{2} }^{2} \right ) dt 
\le & C \int_{0}^{T}t\left ( \left \| \nabla \boldsymbol{u} \right \|_{L^{2} }^{2} +
 \left \| \nabla \theta  \right \| _{L^{2} }^{2} \right ) dt  \\
\le &  \sup_{[ 0, T] } \left( te^{-\sigma t}  \right)
\int_{0}^{T}e^{\sigma t} \left ( \left \| \nabla \boldsymbol{u} \right \|_{L^{2} }^{2} +
 \left \| \nabla \theta  \right \| _{L^{2} }^{2} \right ) dt  \\
\le  & C m_{0}^{\frac{2}{3} }.
\end{aligned}
\end{equation}
Integrating \eqref{31} over $[0, T]$ together with \eqref{32} and \eqref{elementary} leads to the desired \eqref{GN4} with $i=1$. The case $i=2$ can be derived in a similar way.
Moreover, multiplying \eqref{29} by $e^{\sigma t}$, we deduce from \eqref{30} that
\begin{equation} \label{33}
\begin{aligned}
&\frac{d}{dt} \left [ e^{\sigma t}  \int \left ( 2\mu \left ( \rho  \right )  \left | \mathfrak{D}(\boldsymbol{u}) \right |
^{2} +\kappa |\nabla \theta |^{2}   \right ) dx \right ]+e^{\sigma t} \left ( \left \| \sqrt{\rho } \boldsymbol{u}_{t}  \right \|_{L^{2} }^{2}+\left \| \sqrt{\rho } \theta_{t}   \right \| _{L^{2} }^{2} \right )\\
\le &  C\left \| \nabla \boldsymbol{u} \right \|_{L^{2} }^{4}
\left [ e^{\sigma t}\left ( \int 2\mu \left ( \rho  \right )  \left | \mathfrak{D}(\boldsymbol{u}) \right |
^{2} +\kappa |\nabla \theta |^{2} dx\right) \right ]
+Ce^{\sigma t}\left ( \left \| \sqrt{\rho } \boldsymbol{u} \right \|_{L^{2} }^{2}+
 \left \| \sqrt{\rho } \theta \right \| _{L^{2} }^{2} \right )\\
& + \sigma e^{\sigma t}\left (  \int 2\mu \left ( \rho  \right )  \left | d \right |
^{2} +\kappa  |\nabla \theta|^{2} dx \right )\\
\le &  C e^{\sigma t}\left ( \left \| \sqrt{\rho } \boldsymbol{u} \right \|_{L^{2} }^{2}+
 \left \| \sqrt{\rho } \theta \right \| _{L^{2} }^{2}+ \left \| \nabla \boldsymbol{u} \right \|_{L^{2} }^{2} +\left \| \nabla \theta  \right \| _{L^{2} }^{2}\right ).
\end{aligned}
\end{equation}
Integrating the above inequality over $ [ 0,T] $ together with \eqref{Exponential 1} gives the desired $\eqref{GN5}$. This completes the proof of Lemma \ref{lem2}.
\end{proof}

\begin{remark}
Adding \eqref{27} to \eqref{28}, it follows from \eqref{rhobound}, \eqref{GN3} and Poincar\'e's inequality that
\begin{equation} \label{ls01}
\begin{aligned}
& \|\nabla \boldsymbol{u} \|_{H^1}^2 + \|\nabla \theta \|_{H^1}^2 + \|\nabla P\|_{L^2}^2 \\
\le & C\|\sqrt{\rho}\boldsymbol{u}_t\|_{L^2}^2 + C\|\sqrt{\rho}\theta_t\|_{L^2}^2 + C \|\nabla \boldsymbol{u}\|_{L^2}^6 \\
& + C \|\nabla \boldsymbol{u}\|_{L^2}^4 \|\nabla \theta\|_{L^2}^2 + C\|\sqrt{\rho}\boldsymbol{u}\|_{L^2}^2 + C\|\sqrt{\rho}\theta\|_{L^2}^2 \\
\le & C\|\sqrt{\rho}\boldsymbol{u}_t\|_{L^2}^2 + C\|\sqrt{\rho}\theta_t\|_{L^2}^2 + C \|\nabla \boldsymbol{u}\|_{L^2}^2 + C \|\nabla \theta\|_{L^2}^2. 
\end{aligned}
\end{equation}
Consequently, we deduce from \eqref{elementary} and \eqref{GN3} that
\begin{equation} \label{ls02}
\int_0^T \left( \|\nabla \boldsymbol{u} \|_{H^1}^2 + \|\nabla \theta \|_{H^1}^2 + \|\nabla P\|_{L^2}^2 \right) dt \le C.
\end{equation}

\end{remark}

\begin{lemma}\label{lem3}
Under the conditions \eqref{small1} and \eqref{17}, there exists a positive constant $C$ depending only on $\Omega, q, \underline{\mu}, \bar{\mu}, \kappa, \bar{\rho}, \|\nabla \boldsymbol{u}_{0}\|_{L^{2}}$ and $ \| \nabla \theta _{0}\|_{L^{2}}$ such that, for $i= \{ 1,2 \}$, it holds that
\begin{equation} \label{34}
\sup_{[0,T]} \left[t^{i}\left (\| \sqrt{\rho }
\boldsymbol{u}_{t} \|_{L^{2} }^{2}+ \| \sqrt{\rho } \theta_{t}
 \| _{L^{2} }^{2}   \right )\right] + \int_{0}^{T} t^{i}\left ( \| \nabla
\boldsymbol{u}_{t} \|_{L^{2} }^{2} +  \| \nabla \theta _{t}
\| _{L^{2} }^{2}  \right) dt \le C.
\end{equation}
Moreover, for $\sigma$ as in Lemma \ref{lem1} and $\zeta(t) \triangleq \min\{1,t\},$ one has
\begin{equation} \label{ex34}
\begin{aligned}
\sup_{[\zeta(T), T]} \left[ e^{\sigma t} \left (\| \sqrt{\rho }
\boldsymbol{u}_{t} \|_{L^{2} }^{2}+ \| \sqrt{\rho } \theta_{t}
 \| _{L^{2} }^{2}   \right )\right] + \int_{\zeta(T)}^{T} e^{\sigma t} \left ( \| \nabla
\boldsymbol{u}_{t} \|_{L^{2} }^{2} +  \| \nabla \theta _{t}
\| _{L^{2} }^{2}  \right) dt \le C.
\end{aligned}
\end{equation}
\end{lemma}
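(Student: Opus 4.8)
The plan is to run a second-level (differentiated-in-time) energy estimate for the pair $(\boldsymbol{u}_t,\theta_t)$, weighted by $t^i$ and by $e^{\sigma t}$, and to close it by Gronwall exactly as \eqref{GN3}--\eqref{GN5} were closed in Lemma \ref{lem2}. First I would differentiate the momentum equation $\eqref{Benard1}_2$ in time and test it against $\boldsymbol{u}_t$; using $\eqref{Benard1}_1$ and $\mathrm{div}\,\boldsymbol{u}=0$, the troublesome second-order term combines with the convective piece through the identity
\[
\int \rho\,\boldsymbol{u}_{tt}\cdot\boldsymbol{u}_t\,dx+\int \rho\,(\boldsymbol{u}\cdot\nabla\boldsymbol{u}_t)\cdot\boldsymbol{u}_t\,dx=\frac{1}{2}\frac{d}{dt}\|\sqrt{\rho}\boldsymbol{u}_t\|_{L^2}^2,
\]
while the viscous term splits as $-\mathrm{div}(2\mu(\rho)\mathfrak{D}(\boldsymbol{u}))_t=-\mathrm{div}(2\mu(\rho)\mathfrak{D}(\boldsymbol{u}_t))-\mathrm{div}(2\mu(\rho)_t\mathfrak{D}(\boldsymbol{u}))$, producing the good dissipation $\int 2\mu(\rho)|\mathfrak{D}(\boldsymbol{u}_t)|^2\,dx\ge\underline{\mu}\|\nabla\boldsymbol{u}_t\|_{L^2}^2$ plus a remainder carrying $\mu(\rho)_t=-\boldsymbol{u}\cdot\nabla\mu(\rho)$ via \eqref{18}. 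Repeating with $\eqref{Benard1}_3$ tested against $\theta_t$ and adding, I obtain an identity of the form
\[
\frac{1}{2}\frac{d}{dt}\Big(\|\sqrt{\rho}\boldsymbol{u}_t\|_{L^2}^2+\|\sqrt{\rho}\theta_t\|_{L^2}^2\Big)+\underline{\mu}\|\nabla\boldsymbol{u}_t\|_{L^2}^2+\kappa\|\nabla\theta_t\|_{L^2}^2\le\sum_j J_j,
\]
where the remainder $\sum_j J_j$ collects $\int\rho_t|\boldsymbol{u}_t|^2$, $\int\rho_t(\boldsymbol{u}\cdot\nabla\boldsymbol{u})\cdot\boldsymbol{u}_t$, $\int\rho(\boldsymbol{u}_t\cdot\nabla\boldsymbol{u})\cdot\boldsymbol{u}_t$, the viscosity term $\int(\boldsymbol{u}\cdot\nabla\mu(\rho))\mathfrak{D}(\boldsymbol{u}):\mathfrak{D}(\boldsymbol{u}_t)$, the thermal analogues, and the buoyancy/convection couplings $\int(\rho_t\theta+\rho\theta_t)(\boldsymbol{e}_3\cdot\boldsymbol{u}_t)$ and $\int(\rho_t\boldsymbol{u}+\rho\boldsymbol{u}_t)\cdot\boldsymbol{e}_3\,\theta_t$.

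The core of the work is bounding $\sum_j J_j$. Because vacuum is allowed, $\|\boldsymbol{u}_t\|_{L^2}$ is \emph{not} controlled by $\|\sqrt{\rho}\boldsymbol{u}_t\|_{L^2}$, so for every quadratic-in-$\boldsymbol{u}_t$ term I would split $\rho=\sqrt{\rho}\cdot\sqrt{\rho}$, keep one factor as $\sqrt{\rho}\boldsymbol{u}_t$ and put the remaining $\boldsymbol{u}_t$ into $L^6$, estimated by $\|\boldsymbol{u}_t\|_{L^6}\le C\|\nabla\boldsymbol{u}_t\|_{L^2}$ (Poincaré, since $\boldsymbol{u}_t|_{\partial\Omega}=0$); a typical term becomes $\big|\int\rho(\boldsymbol{u}_t\cdot\nabla\boldsymbol{u})\cdot\boldsymbol{u}_t\,dx\big|\le C\|\sqrt{\rho}\boldsymbol{u}_t\|_{L^2}\|\nabla\boldsymbol{u}_t\|_{L^2}\|\nabla\boldsymbol{u}\|_{L^3}$, and the top factor $\|\nabla\boldsymbol{u}_t\|_{L^2}$ is absorbed into the dissipation by Cauchy--Schwarz, leaving a coefficient $\|\nabla\boldsymbol{u}\|_{L^3}^2$ that is time-integrable by \eqref{GN3} and \eqref{ls02}. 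The lower-order factors are supplied by the pointwise bound \eqref{rhobound}, the $H^1$-bounds \eqref{GN3} on $\nabla\boldsymbol{u},\nabla\theta$, the $L^q$-control \eqref{17} of $\nabla\mu(\rho)$, and the elliptic reconstructions \eqref{27}--\eqref{28}. The outcome should be a differential inequality
\[
\frac{d}{dt}\Big(\|\sqrt{\rho}\boldsymbol{u}_t\|_{L^2}^2+\|\sqrt{\rho}\theta_t\|_{L^2}^2\Big)+\underline{\mu}\|\nabla\boldsymbol{u}_t\|_{L^2}^2+\kappa\|\nabla\theta_t\|_{L^2}^2\le C A(t)\Big(\|\sqrt{\rho}\boldsymbol{u}_t\|_{L^2}^2+\|\sqrt{\rho}\theta_t\|_{L^2}^2\Big)+C B(t),
\]
with $\int_0^T A(t)\,dt\le C$ (through $\|\nabla\boldsymbol{u}\|_{L^2}^4$, \eqref{17} and \eqref{ls02}) and $B(t)$ built from the already-estimated quantities $\|\nabla\boldsymbol{u}\|_{L^2}^2,\|\nabla\theta\|_{L^2}^2,\|\sqrt{\rho}\boldsymbol{u}\|_{L^2}^2,\|\sqrt{\rho}\theta\|_{L^2}^2$.

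To close \eqref{34} I would multiply the last inequality by $t^i$ ($i=1,2$); the extra term $it^{i-1}(\|\sqrt{\rho}\boldsymbol{u}_t\|_{L^2}^2+\|\sqrt{\rho}\theta_t\|_{L^2}^2)$ integrates to a finite constant — for $i=1$ by \eqref{GN3}, and for $i=2$ by the $i=1$ bound in \eqref{GN4} — and Gronwall together with $\int_0^T A\,dt\le C$ and $\int_0^T B\,dt\le C$ (from \eqref{elementary} and \eqref{GN3}) yields \eqref{34}. For the exponential bound \eqref{ex34} I would instead multiply by $e^{\sigma t}$ and integrate over $[\zeta(T),T]$: the generated term $\sigma e^{\sigma t}(\|\sqrt{\rho}\boldsymbol{u}_t\|_{L^2}^2+\|\sqrt{\rho}\theta_t\|_{L^2}^2)$ is integrable by \eqref{GN5}, the weighted forcing $\int e^{\sigma t}B\,dt$ by \eqref{Exponential 1}, and the cut-off $\zeta(T)=\min\{1,T\}$ lets me start the weighted estimate at $t=1$ (when $T>1$) using the value $\|\sqrt{\rho}\boldsymbol{u}_t(\cdot,1)\|_{L^2}^2+\|\sqrt{\rho}\theta_t(\cdot,1)\|_{L^2}^2\le C$ already furnished by \eqref{34}, thereby avoiding any compatibility condition on $\boldsymbol{u}_t,\theta_t$ at $t=0$.

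I expect the genuine obstacle to be the density-dependent viscosity remainder $\int(\boldsymbol{u}\cdot\nabla\mu(\rho))\mathfrak{D}(\boldsymbol{u}):\mathfrak{D}(\boldsymbol{u}_t)$: since $\nabla\mu(\rho)$ only lies in $L^q$, Hölder forces $\mathfrak{D}(\boldsymbol{u})$ into $L^{3q/(q-3)}$, which exceeds $L^6$ precisely when $3<q<6$ and is therefore \emph{not} controlled by $\|\nabla\boldsymbol{u}\|_{H^1}$ alone; handling this requires invoking the higher Stokes regularity \eqref{s2} for $\|\nabla\boldsymbol{u}\|_{W^{1,r}}$ and interpolating so that the accompanying power of $\|\nabla\boldsymbol{u}_t\|_{L^2}$ stays strictly below $2$ (to be absorbed) while the residual time-weight remains integrable — balancing these exponents is the delicate point of the argument.
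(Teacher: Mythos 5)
Your plan coincides with the paper's actual proof of Lemma \ref{lem3} in every structural respect: differentiate $\eqref{Benard1}_2$ and $\eqref{Benard1}_3$ in time and test against $(\boldsymbol{u}_t,\theta_t)$ (the paper's \eqref{35}--\eqref{39}), handle the vacuum by splitting $\rho=\sqrt{\rho}\cdot\sqrt{\rho}$ and sending the spare factor of $\boldsymbol{u}_t,\theta_t$ into $L^3,L^4,L^6$ via Gagliardo--Nirenberg and Poincar\'e, use \eqref{18} for the $\mu(\rho)_t$ remainder, reconstruct $\|\nabla\boldsymbol{u}\|_{H^1}$ and $\|\nabla\theta\|_{H^1}$ from \eqref{27}--\eqref{28} so the right side takes the Gronwall form $A(t)(\|\sqrt{\rho}\boldsymbol{u}_t\|_{L^2}^2+\|\sqrt{\rho}\theta_t\|_{L^2}^2)+B(t)$ with $A,B\in L^1(0,T)$ by \eqref{elementary} and \eqref{GN3} (this is \eqref{49}), and then multiply by $t^i$ resp.\ $e^{\sigma t}$, absorbing the extra terms $it^{i-1}(\cdot)$ and $\sigma e^{\sigma t}(\cdot)$ through \eqref{GN3}, \eqref{GN4}, \eqref{GN5}, with the cutoff $\zeta(T)$ letting the exponential estimate start from $t=\min\{1,T\}$ where \eqref{34} with $i=1$ controls the data --- exactly the paper's \eqref{50}--\eqref{53} and \eqref{5200}.

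The one substantive deviation is your closing paragraph, where you flag the viscosity remainder $J_1=2\int(\boldsymbol{u}\cdot\nabla\mu(\rho))\,\mathfrak{D}(\boldsymbol{u}):\nabla\boldsymbol{u}_t\,dx$ as a genuine obstacle requiring the $W^{2,r}$ Stokes estimate \eqref{s2} and a delicate exponent balance. This is a misdiagnosis created by your choice to put $\boldsymbol{u}$ in $L^6$, which indeed forces $\mathfrak{D}(\boldsymbol{u})\in L^{3q/(q-3)}\not\subset L^6$ when $3<q<6$. The paper's estimate \eqref{40} dissolves the difficulty with a different H\"older split: on the bounded domain $L^q\hookrightarrow L^3$ (since $q>3$) and $\|\boldsymbol{u}\|_{L^\infty}\le C\|\boldsymbol{u}\|_{H^2}\le C\|\nabla\boldsymbol{u}\|_{H^1}$, so
\begin{equation*}
|J_1|\le C\|\boldsymbol{u}\|_{L^\infty}\|\nabla\mu(\rho)\|_{L^3}\|\mathfrak{D}(\boldsymbol{u})\|_{L^6}\|\nabla\boldsymbol{u}_t\|_{L^2}\le C\|\nabla\boldsymbol{u}\|_{H^1}^2\|\nabla\boldsymbol{u}_t\|_{L^2}\le \frac{\underline{\mu}}{10}\|\nabla\boldsymbol{u}_t\|_{L^2}^2+C\|\nabla\boldsymbol{u}\|_{H^1}^4,
\end{equation*}
and the leftover $\|\nabla\boldsymbol{u}\|_{H^1}^4$ is precisely what \eqref{27} converts into $\|\sqrt{\rho}\boldsymbol{u}_t\|_{L^2}^4$ plus lower-order terms, which fit the quadratic Gronwall structure of \eqref{49} with coefficient integrable by \eqref{GN3}. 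No $W^{2,r}$ regularity enters this lemma at all (the paper first invokes \eqref{s2} in Lemma \ref{lem4}); your proposed fallback would moreover feed powers $\|\nabla\boldsymbol{u}_t\|_{L^2}^{(5r-6)/(4r)}$ coming from $\|\rho\boldsymbol{u}_t\|_{L^r}$ (cf.\ \eqref{55}) back into the very dissipation being absorbed --- workable in principle since the total power stays below $2$ for $r<6$, but needlessly intricate and left unverified in your write-up. Apart from this avoidable unfinished step, your argument is the paper's.
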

\begin{proof}
Firstly, differentiating $\eqref{Benard}_2$ with respect to $t$, we get
\begin{equation}\label{35}
\begin{aligned}
\rho \boldsymbol{u}_{tt} 
+ \rho \boldsymbol{u}\cdot \nabla \boldsymbol{u}_{t}
&
-\mathrm{div} \left ( 2\mu \left ( \rho  \right ) \mathfrak{D}(\boldsymbol{u})_{t}  \right ) + \nabla P_{t}\\
&= - \rho _{t} \boldsymbol{u}_{t} - \left ( \rho \boldsymbol{u} \right )_{t} \cdot \nabla \boldsymbol{u} + \mathrm{div} \left [ \left ( 2\mu \left ( \rho  \right ) \right )_{t} \mathfrak{D}(\boldsymbol{u})  \right ]+ (\rho \theta)_t \boldsymbol{e}_{3}.
\end{aligned}
\end{equation}
Multiplying \eqref{35} by $\boldsymbol{u}_{t}$ and integrating by part over $\Omega$ yields that
\begin{equation}\label{36}
\begin{aligned}
& \frac{1}{2} \frac{d}{dt} \int \rho \left | \boldsymbol{u}_{t} \right | ^{2} dx
+\int2\mu \left ( \rho  \right )\left | \mathfrak{D}(\boldsymbol{u})_{t}  \right | ^{2}  dx\\
= & 2\int \boldsymbol{u}\cdot \nabla\mu \left ( \rho  \right )\mathfrak{D}(\boldsymbol{u}): \nabla \boldsymbol{u}_{t}dx
-2\int \rho \boldsymbol{u} \cdot \nabla \boldsymbol{u}_{t} \cdot \boldsymbol{u}_{t} dx -\int \rho (\boldsymbol{u}_t \cdot \nabla \boldsymbol{u}) \cdot \boldsymbol{u}_{t}  dx \\
& -\int \rho\boldsymbol{u}\cdot\nabla  \left ( \boldsymbol{u}\cdot \nabla \boldsymbol{u}
\cdot \boldsymbol{u}_{t}   \right )dx  + \int \rho \boldsymbol{u}\cdot \nabla  \left ( \theta \boldsymbol{e}_{3} \cdot \boldsymbol{u}_{t}
\right )dx+\int  \rho  \theta _{t}  \left ( \boldsymbol{e}_{3} \cdot \boldsymbol{u}_{t}  \right ) dx.
\end{aligned}
\end{equation}
Differentiating $\eqref{Benard}_3$ with respect to $t$, we get
\begin{equation}\label{37}
\begin{aligned}
\rho \theta _{tt} + \rho \boldsymbol{u}\cdot \nabla\theta _{t}
-\kappa \Delta \theta _{t}  = - \rho _{t} \theta _{t} - \left ( \rho \boldsymbol{u} \right )_{t}\cdot \nabla \theta + (\rho \boldsymbol{u})_t \cdot \boldsymbol{e}_{3}.
\end{aligned}
\end{equation}
Multiplying $\eqref{37}$ by $\theta _{t}$, integrating the resulting equation over $\Omega$, one has
\begin{equation}\label{38}
\begin{aligned}
& \frac{1}{2} \frac{d}{dt} \int \rho \theta _{t}^{2} dx
 +\kappa \int |\nabla \theta _{t}|^{2}dx \\
= & -2 \int \rho (\boldsymbol{u} \cdot \nabla \theta _{t})\theta _{t} dx
-\int \rho (\boldsymbol{u}_{t} \cdot \nabla \theta)\theta _{t} dx\\
&-\int \rho\boldsymbol{u}\cdot\nabla  \left ( (\boldsymbol{u}\cdot \nabla \theta) \theta _{t}   \right )dx
+\int  \rho (\boldsymbol{u} _{t} \cdot e_{3}) \theta _{t}  dx.
\end{aligned}
\end{equation}
Summing $\eqref{36}$ and $\eqref{38}$, we have
\begin{equation}\label{39}
\begin{aligned}
&\frac{1}{2}\frac{d}{dt} \int \left(\rho \left | \boldsymbol{u}_{t} \right | ^{2} +
\rho \theta _{t}^{2}   \right) dx +\int \left ( 2 \mu \left ( \rho  \right )
\left | \mathfrak{D}(\boldsymbol{u})_{t}  \right |^{2}  +\kappa  |\nabla  \theta _{t}|^{2}   \right ) dx \\
= & 2 \int \boldsymbol{u} \cdot \nabla \mu(\rho) \mathfrak{D}(\boldsymbol{u}): \nabla
 \boldsymbol{u}_{t}dx - 2\left ( \int \rho \boldsymbol{u} \cdot (\nabla \boldsymbol{u}_{t}\cdot
\boldsymbol{u}_{t})dx+\int \rho \boldsymbol{u} \cdot (\theta _{t} \nabla  \theta _{t}) dx
\right )\\ 
& -\left ( \int \rho \boldsymbol{u}_t \cdot \nabla  \boldsymbol{u} \cdot \boldsymbol{u}_{t} dx + \int \rho \boldsymbol{u}_t \cdot \nabla \theta \theta _{t} dx
\right )\\
 &- \int \rho \boldsymbol {u} \cdot \nabla \left  ( \boldsymbol{u}\cdot
 \nabla \boldsymbol{u}\cdot  \boldsymbol{u}_{t} \right )dx - \int \rho \boldsymbol {u} \cdot \nabla \left
( \boldsymbol{u}\cdot \nabla \theta  \cdot \theta _{t} \right )dx \\
&+\int \rho \boldsymbol{u}\cdot \nabla \left ( \theta \boldsymbol{e}_{3} \cdot \boldsymbol{u}_{t} \right ) dx
+\int \rho \boldsymbol{u}\cdot \nabla \left (\boldsymbol{u} \cdot \boldsymbol{e}_{3} \theta _{t} \right ) dx
+2\int \rho \theta _{t} ( \boldsymbol{u}_{t} \cdot  \boldsymbol{e}_{3}) dx\\
\triangleq & J_{1}+ J_{2}+ J_{3} + J_{4} + J_{5} + J_{6} + J_7.
\end{aligned}
\end{equation}
We deal with $J_1$-$J_6$ term by term. Firstly, by \eqref{17}, Sobolev's  inequality and the Cauchy-Schwarz inequality, we get
\begin{equation}\label{40}
\begin{aligned}
J_{1} &=2\int \boldsymbol{u} \cdot \nabla \mu \left ( \rho  \right ) \mathfrak{D}(\boldsymbol{u}): \nabla
\boldsymbol{u}_{t}dx \\
&\le C\left \| \boldsymbol{u}  \right \| _{L^{\infty }  }
\left \| \nabla  \mu \left ( \rho  \right ) \right \|_{L^{3} }  \left \| \mathfrak{D}(\boldsymbol{u}) \right \|_{L^{6} }
\left \| \nabla \boldsymbol{u}_{t} \right \| _{L^{2} } \\
& \le C \left \| \nabla\boldsymbol{u}  \right \| _{H^{1} }^{2} \left \| \nabla\boldsymbol{u}_{t}  \right \|
_{L^{2} }\\
&  \le \frac{1}{10} \underline{\mu}  \left \| \nabla\boldsymbol{u}_{t}  \right \|
_{L^{2} } ^{2} + C\left \| \nabla \boldsymbol{u} \right \| _{H^{1} }^{4}.
\end{aligned}
\end{equation}
Applying H\"older's inequality, the Gagliardo-Nirenberg inequality and \eqref{rhobound} gives
\begin{equation}\label{41}
\begin{aligned}
J_{2}&=-2 \int \rho \boldsymbol{u}\cdot \boldsymbol{u}_{t}\cdot \nabla
\boldsymbol{u}_{t}dx-2\int \rho \boldsymbol{u}\cdot \theta _{t} \cdot  \nabla  \theta _{t} dx \\
&\le C\left \| \sqrt{\rho }  \boldsymbol{u}_{t} \right \| _{L^{3} }
\left \| \nabla\boldsymbol{u}_{t} \right \| _{L^{2} } \left \| \boldsymbol{u}\right \|_{L^{6} }
+ C\left \| \sqrt{\rho }  \theta _{t} \right \| _{L^{3} }
\left \| \nabla\theta _{t} \right \| _{L^{2} } \left \| \boldsymbol{u}\right \|_{L^{6} }\\
 &\le  C\left \| \sqrt{\rho }  \boldsymbol{u}_{t} \right \| _{L^{2} }^{\frac{1}{2} }
\left \| \nabla\boldsymbol{u}_{t} \right \| _{L^{2} }^{\frac{3}{2} }  \left \|\nabla  \boldsymbol{u}
\right \|_{L^{2} } +C\left \| \sqrt{\rho }  \theta _{t} \right \| _{L^{2} }^{\frac{1}{2} }
\left \| \nabla\theta _{t} \right \| _{L^{2} }^{\frac{3}{2} }  \left \|\nabla  \boldsymbol{u}
\right \|_{L^{2} }\\
&\le \frac{1}{10} \underline{\mu}  \left \| \nabla\boldsymbol{u}_{t} \right \| _{L^{2} }^{2}
+\frac{1}{8} \kappa\left \| \nabla\theta _{t} \right \| _{L^{2} }^{2}
+ C\left \| \sqrt{\rho }  \boldsymbol{u}_{t} \right \| _{L^{2} }^{2 } \left \| \nabla \boldsymbol{u}
\right \|_{L^{2} }^{4} +C\left \| \sqrt{\rho } \theta _{t} \right \| _{L^{2} }^{2 }\left \| \nabla \boldsymbol{u}
\right \|_{L^{2} }^{4}.
\end{aligned}
\end{equation}
Similarly,
\begin{equation}\label{42}
\begin{aligned}
J_{3} &= - \int \rho \boldsymbol{u}_t \cdot \nabla  \boldsymbol{u} \cdot \boldsymbol{u}_{t} dx - \int \rho \boldsymbol{u}_t \cdot \nabla \theta \theta _{t} dx \\
&\le  C\left \| \sqrt{\rho }  \boldsymbol{u}_{t} \right \| _{L^{4} }^{2}
\left \| \nabla\boldsymbol{u} \right \| _{L^{2} }+C\left \| \sqrt{\rho }  \boldsymbol{u}_{t}
\right \| _{L^{4} }  \left \| \sqrt{\rho }  \theta _{t} \right \| _{L^{4} }
\left \| \nabla\theta  \right \| _{L^{2} }\\
&\le C\left \| \sqrt{\rho }  \boldsymbol{u}_{t} \right \| _{L^{2} } ^{\frac{1}{2} }
\left \| \nabla \boldsymbol{u} _{t} \right \| _{L^{2} }^{\frac{3}{2} }
\left \| \nabla\boldsymbol{u} \right \| _{L^{2} }+ C \left \| \sqrt{\rho }  \boldsymbol{u}_{t} \right \| _{L^{2} } ^{\frac{1}{4} } \left \| \nabla \boldsymbol{u} _{t} \right \|
 _{L^{2} }^{\frac{3}{4}} \left \| \sqrt{\rho }
\theta _{t} \right \| _{L^{2} } ^{\frac{1}{4} }  \left \| \nabla \theta  _{t} \right \| _{L^{2} }^{\frac{3}{4} } \left \|
\nabla\theta  \right \| _{L^{2} }\\
&\le \frac{1}{10} \underline{\mu }\left \| \nabla\boldsymbol{u}_{t} \right \| _{L^{2} }^{2}
+\frac{1}{8} \kappa\left \| \nabla\theta _{t} \right \| _{L^{2} }^{2}
+ C( \left \| \sqrt{\rho }  \boldsymbol{u}_{t} \right \| _{L^{2} }^{2 } + \left \| \sqrt{\rho } \theta _{t} \right \| _{L^{2} }^{2 } ) (\left \| \nabla \boldsymbol{u}
\right \|_{L^{2} }^{4} + \left \| \nabla \theta \right \|_{L^{2} }^{4}).
\end{aligned}
\end{equation}
Applying the Gagliardo-Nirenberg inequality and the Cauchy-Schwarz inequality, we get
\begin{equation}\label{44}
\begin{aligned}
J_{4}=& -\int \rho \boldsymbol {u} \cdot \nabla \left  ( \boldsymbol{u}\cdot \nabla \boldsymbol{u}
\cdot  \boldsymbol{u}_{t} \right )dx\\
\le & C\left \|  \boldsymbol{u} \right \| _{L^{6} } \left \| \boldsymbol{u} _{t} \right \| _{L^{6} }
\left \| \nabla\boldsymbol{u} \right \| _{L^{3} }^{2}+ C\left \|  \boldsymbol{u} \right \| _{L^{6} }
^{2} \left \|\nabla ^{2}\boldsymbol{u} \right \|_{L^{2}}\left \| \boldsymbol{u}_{t}\right \| _{L^{6}} \\
& +C\left \|  \boldsymbol{u} \right \| _{L^{6} }^{2} \left \|\nabla\boldsymbol{u} \right \|_{L^{6}}
\left \| \nabla \boldsymbol{u}_{t}\right \|_{L^{2}}\\
\le & C\left \| \nabla  \boldsymbol{u} \right \|_{L^{2} }^{2}\left \|\nabla\boldsymbol{u}_{t}
\right \|_{L^{2}}\left \| \nabla \boldsymbol{u}\right \|_{H^{1}}\\
\le & \frac{1}{10}\underline{\mu }  \left \|\nabla\boldsymbol{u}_{t}  \right \|_{L^{2}}^{2}+ C \left \|\nabla
\boldsymbol{u}  \right \|_{L^{2}}^{4} \left \| \nabla \boldsymbol{u}\right \|_{H^{1}}^{2},
\end{aligned}
\end{equation}
and
\begin{equation}\label{45}
\begin{aligned}
J_{5} = & -\int \rho \boldsymbol {u} \cdot \nabla \left  ( \boldsymbol{u}\ \cdot \nabla \theta \theta _{t} \right )dx\\
 \le &  C\left \|  \boldsymbol{u} \right \| _{L^{6} } \left \|\theta  _{t} \right \| _{L^{6} }
\left \| \nabla\boldsymbol{u} \right \| _{L^{3} } \left \| \nabla \theta  \right \| _{L^{3} } +
C \left \|  \boldsymbol{u} \right \| _{L^{6} }^{2} \left \|\nabla ^{2}\theta\right \|_{L^{2}}
\left \|\theta _{t}\right \| _{L^{6}}\\
& + C\left \|  \boldsymbol{u} \right \| _{L^{6} }^{2} \left \|
\nabla\theta  \right \|_{L^{6}}\left \| \nabla \theta _{t}\right \|_{L^{2}}\\
\le &  C\left \| \nabla  \boldsymbol{u} \right \|_{L^{2} }^{\frac{3}{2} }\left \|\nabla\theta _{t}
\right \|_{L^{2}}\left \| \nabla \boldsymbol{u}\right \|_{H^{1}}^{\frac{1}{2} }
\left \| \nabla \theta \right \|_{L^{2}}^{\frac{1}{2} }\left \| \nabla \theta \right\|_{H^{1}}^{\frac{1}{2}}
+C\left \| \nabla  \boldsymbol{u} \right \|_{L^{2} }^{2}\left \|\nabla\theta _{t}
\right \|_{L^{2}}\left \| \nabla \theta \right \|_{H^{1}}\\
& + C\left \| \nabla  \boldsymbol{u} \right \|_{L^{2} }^{2}\left \|\nabla\theta _{t}
\right \|_{L^{2}}\left \| \nabla \theta \right \|_{H^{1}}\\
\le  & \frac{1}{8}\kappa \left \|\nabla\theta _{t}  \right \|_{L^{2}}^{2}+C\left \|\nabla\boldsymbol{u}
\right \|_{L^{2}}^{3} \left \| \nabla \boldsymbol{u}\right \|_{H^{1}}
\left \|\nabla\theta   \right \|_{L^{2}}\left \|\nabla\theta   \right \|_{H^{1}}
+C\left \|\nabla\boldsymbol{u}  \right \|_{L^{2}}^{4} \left \| \nabla \theta \right \|_{H^{1}}.
\end{aligned}
\end{equation}
Similarly,
\begin{equation}\label{47}
\begin{aligned}
J_{6} = & \int \rho \boldsymbol{u} \cdot \nabla \left ( \theta \boldsymbol{u}_{t}\cdot \boldsymbol{e}_3
 \right ) dx+\int \rho \boldsymbol{u} \cdot \nabla \left (\boldsymbol{u} \cdot \boldsymbol{e}_{3} \theta
 _{t} \right ) dx \\
\le &  C\left \| \boldsymbol{u} \right \|_{L^{2}} \left \| \boldsymbol{u}_{t}  \right \|_{L^{4}}
\left \| \nabla \theta  \right \|_{L^{4}}+C\left \| \nabla \boldsymbol{u}_{t}  \right \|_{L^{2}}
\left \| \theta  \right \|_{L^{4}}\left \| \boldsymbol{u} \right \|_{L^{4}}
+C\left \| \theta _{t}  \right \|_{L^{6}}\left \|\nabla  \boldsymbol{u}  \right \|_{L^{6}}
\left \| \boldsymbol{u} \right \|_{L^{\frac{3}{2} }}\\
&+ C\left \| \nabla \theta _{t}  \right \|_{L^{2}}\left \| \boldsymbol{u} \right \|_{L^{4}} ^{2}\\
\le &  C\left \| \nabla \boldsymbol{u} \right \|_{L^{2}} \left \| \nabla \boldsymbol{u}_{t}  \right \|_{L^{2}}
\left \| \nabla \theta  \right \|_{H^{1}}+C\left \| \nabla \boldsymbol{u}_{t}  \right \|_{L^{2}}
\left \|\nabla  \theta  \right \|_{L^{2}}\left \|\nabla  \boldsymbol{u} \right \|_{L^{2}}\\
&+C\left \|\nabla  \theta _{t}  \right \|_{L^{2}}\left \|\nabla  \boldsymbol{u}  \right \|_{H^{1}}
\left \| \nabla \boldsymbol{u} \right \|_{L^{2} }
+ C\left \| \nabla \theta _{t}  \right \|_{L^{2}}\left \|\nabla  \boldsymbol{u} \right \|_{L^{2}} ^{2}\\
\le & \frac{1}{10} \underline{\mu}\left \| \nabla\boldsymbol{u}_{t} \right \| _{L^{2} }^{2}+
\frac{1}{8}\kappa \left \|\nabla\theta _{t}  \right \|_{L^{2}}^{2}+C\left \|\nabla\boldsymbol{u}
\right \|_{L^{2}}^{2} \left \| \nabla\theta \right \|_{H^{1}}^{2} +C \left \|\nabla  \boldsymbol{u}
 \right \|_{L^{2}}^{2} \left \|\nabla\boldsymbol{u}  \right \|_{H^{1}}^{2},
\end{aligned}
\end{equation}
and
\begin{equation}\label{48}
\begin{aligned}
J_{7}= 2\int \rho \theta_t \boldsymbol{u}_{t} \cdot  \boldsymbol{e}_{3}dx 
\le & C\left \| \sqrt{\rho }\boldsymbol{u}_{t}  \right \|_{L^{2} }\left \| \sqrt{\rho }
\theta _{t}  \right \|_{L^{2} } 
\le  C \left \| \sqrt{\rho }\boldsymbol{u}_{t}  \right \|_{L^{2} }^{2}
+ C \left \| \sqrt{\rho }\theta _{t}  \right \|_{L^{2} }^{2}.
\end{aligned}
\end{equation}
Substituting the above estimates \eqref{40}-\eqref{48} into \eqref{39}, we derive from \eqref{GN3} that 
\begin{equation}\label{49}
\begin{aligned}
&\frac{d}{dt}\int \left ( \rho |  \boldsymbol{u}_{t} |^{2} + \rho \theta _{t}^{2} \right )dx+\int \left (\underline{\mu} \left | \nabla  \boldsymbol{u}_{t}\right
|^{2}+\kappa |\nabla \theta _{t}|^{2}\right)dx\\
\le &  C\left (  \left \| \sqrt{\rho }  \boldsymbol{u}_{t} \right \| _{L^{2} }^{2} + \left \|
\sqrt{\rho }  \theta _{t} \right \| _{L^{2} }^{2} \right ) \left ( \left \| \sqrt{\rho }
 \boldsymbol{u}_{t} \right \| _{L^{2} }^{2} + \left \| \sqrt{\rho }  \theta _{t} \right \| _{L^{2}
 }^{2} +\left \| \nabla \boldsymbol{u} \right \| _{L^{2} }^{2}+\left \| \nabla \theta  \right \| _{L^{2} }^{2}\right )\\
&+ C\left(\left \| \sqrt{\rho } \boldsymbol{u}_{t} \right \| _{L^{2} }^{2} + \left \| \sqrt{\rho }  \theta _{t} \right \| _{L^{2}
 }^{2} +\left \| \nabla \boldsymbol{u} \right \| _{L^{2} }^{2}+\left \| \nabla \theta  \right \| _{L^{2} }^{2}\right).
\end{aligned}
\end{equation}
Multiplying $\eqref{49}$ by $t$, we have
\begin{equation}\label{50}
\begin{aligned}
&\frac{d}{dt} \left[ t\left(\|\sqrt{\rho} \boldsymbol{u}_{t}\|_{L^2}^{2} + \|\sqrt{\rho} \theta _{t} \|_{L^2}^{2} \right )\right] + t\left (\underline{\mu} \| \nabla  \boldsymbol{u}_{t}\|_{L^2}^2 +\kappa \| \nabla \theta_{t}\|_{L^2}^{2}   \right ) \\
\le &  Ct \left ( \left \| \sqrt{\rho }  \boldsymbol{u}_{t} \right \| _{L^{2} }^{2} + \left \|
\sqrt{\rho }  \theta _{t} \right \| _{L^{2} }^{2} \right ) \left ( \left \| \sqrt{\rho }
 \boldsymbol{u}_{t} \right \| _{L^{2} }^{2} + \left \| \sqrt{\rho }  \theta _{t} \right \| _{L^{2}
 }^{2} +\left \| \nabla \boldsymbol{u} \right \| _{L^{2} }^{2}+\left \| \nabla \theta  \right \| _{L^{2} }^{2}\right )\\
&+ C t\left ( \left \| \sqrt{\rho } \boldsymbol{u}_{t} \right \| _{L^{2} }^{2} + \left \| \sqrt{\rho }  \theta _{t} \right \| _{L^{2}
 }^{2} +\left \| \nabla \boldsymbol{u} \right \| _{L^{2} }^{2}+\left \| \nabla \theta  \right \| _{L^{2} }^{2} \right )
+ C \left ( \left \| \sqrt{\rho } \boldsymbol{u}_{t} \right \| _{L^{2} }^{2} + \left \| \sqrt{\rho }  \theta _{t} \right \| _{L^{2}
 }^{2} \right ).
\end{aligned}
\end{equation}
Applying Gronwall's inequality together with \eqref{elementary}, \eqref{GN3} and \eqref{GN4} yields
\begin{equation}\label{51}
\begin{aligned}
&\sup_{[0,T]} \left[ t\left ( \left \| \sqrt{\rho } \boldsymbol{u}_{t}
\right \| _{L^{2} }^{2} + \left \| \sqrt{\rho }  \theta _{t} \right \| _{L^{2} }^{2} \right )\right]
+\int_{0}^{T} t\left (\underline{\mu }\left \|  \nabla  \boldsymbol{u}_{t}\right \|_{L^{2} }^{2}
+\kappa \left \|  \nabla  \theta _{t}\right \|_{L^{2} }^{2}\right )dt\\
\le &  C\left ( \int_{0}^{T} t\left ( \left \| \sqrt{\rho } \boldsymbol{u}_{t} \right \| _{L^{2} }^{2}+
 \left \| \sqrt{\rho }  \theta _{t} \right \| _{L^{2}}^{2} +\left \| \nabla \boldsymbol{u}
 \right \| _{L^{2} }^{2}+\left \| \nabla \theta  \right \| _{L^{2} }^{2} \right )dt
+\int_{0}^{T} \left \| \sqrt{\rho } \boldsymbol{u}_{t} \right \| _{L^{2} }^{2}+
 \left \| \sqrt{\rho }  \theta _{t} \right \| _{L^{2}}^{2}dt \right )\\
& \cdot \exp \left \{ \int_{0}^{T} \left \| \sqrt{\rho } \boldsymbol{u}_{t} \right \| _{L^{2} }^{2}+
 \left \| \sqrt{\rho }  \theta _{t} \right \| _{L^{2}}^{2} +\left \| \nabla \boldsymbol{u}
 \right \| _{L^{2} }^{2}+\left \| \nabla \theta  \right \| _{L^{2} }^{2} dt \right \} \le C,
\end{aligned}
\end{equation}
which is the desired \eqref{34} with $i=1$.

\noindent Multiplying $\eqref{49}$ by $t^{2}$, we get
\begin{equation}\label{52}
\begin{aligned}
&\frac{d}{dt} \left[t^{2} \left( \|\sqrt{\rho}\boldsymbol{u}_{t}\|_{L^2}^{2} + \|\sqrt{\rho}\theta _{t}\|_{L^2}^{2} \right)\right] + t^{2} \left (\underline{\mu} \| \nabla  \boldsymbol{u}_{t}\|_{L^2}^2 +\kappa \| \nabla \theta_{t}\|_{L^2}^{2}   \right ) \\
\le &  C t^2 \left ( \left \| \sqrt{\rho }  \boldsymbol{u}_{t} \right \| _{L^{2} }^{2} +  \left \|
\sqrt{\rho }  \theta _{t} \right \| _{L^{2} }^{2} \right ) \left ( \left \| \sqrt{\rho }
 \boldsymbol{u}_{t} \right \| _{L^{2} }^{2} + \left \| \sqrt{\rho }  \theta _{t} \right \| _{L^{2}
 }^{2} +\left \| \nabla \boldsymbol{u} \right \| _{L^{2} }^{2}+\left \| \nabla \theta  \right \| _{L^{2} }^{2}\right )\\
&+ t^{2} \left ( \left \| \sqrt{\rho } \boldsymbol{u}_{t} \right \| _{L^{2} }^{2} + \left \| \sqrt{\rho }  \theta _{t} \right \| _{L^{2}
 }^{2} +\left \| \nabla \boldsymbol{u} \right \| _{L^{2} }^{2}+\left \| \nabla \theta  \right \| _{L^{2} }^{2} \right )
+  2 t \left( \|\sqrt{\rho}\boldsymbol{u}_{t}\|_{L^2}^{2} + \|\sqrt{\rho}\theta _{t}\|_{L^2}^{2} \right).
\end{aligned}
\end{equation}
Applying Gronwall's inequality together with \eqref{elementary}, \eqref{GN3}, \eqref{GN4} and \eqref{51} yields
\begin{equation}\label{53}
\begin{aligned}
& \sup_{[0,T]} \left[ t^{2} \left(\left \| \sqrt{\rho } \boldsymbol{u}_{t}
\right \| _{L^{2} }^{2} + \left \| \sqrt{\rho }  \theta _{t} \right \| _{L^{2} }^{2} \right )\right]
+\int_{0}^{T} t^{2} \left (\underline{\mu }\left \|  \nabla  \boldsymbol{u}_{t}\right \|_{L^{2} }^{2}
+\kappa \left \|  \nabla  \theta _{t}\right \|_{L^{2} }^{2}\right )dt\\
\le  & C\left [ \int_{0}^{T} t^{2} \left ( \left \| \sqrt{\rho } \boldsymbol{u}_{t} \right \| _{L^{2} }^{2}+
 \left \| \sqrt{\rho }  \theta _{t} \right \| _{L^{2}}^{2} +\left \| \nabla \boldsymbol{u}
 \right \| _{L^{2} }^{2}+\left \| \nabla \theta  \right \| _{L^{2} }^{2} \right ) dt+\int_{0}^{T} t\left ( \left \| \sqrt{\rho } \boldsymbol{u}_{t} \right \| _{L^{2} }^{2}+
 \left \| \sqrt{\rho }  \theta _{t} \right \| _{L^{2}}^{2} \right ) dt \right ]\\
&\cdot \exp \left \{ \int_{0}^{T} \left \| \sqrt{\rho } \boldsymbol{u}_{t} \right \| _{L^{2} }^{2}+
 \left \| \sqrt{\rho }  \theta _{t} \right \| _{L^{2}}^{2} +\left \| \nabla \boldsymbol{u}
 \right \| _{L^{2} }^{2}+\left \| \nabla \theta  \right \| _{L^{2} }^{2} dt \right \}
\le C,
\end{aligned}
\end{equation}
which is the desired \eqref{34} with $i=2.$ 

Finally, for $\sigma$ as in Lemma \ref{lem1}, multiplying \eqref{49} by $e^{\sigma t}$, we have
\begin{equation}\label{5200}
\begin{aligned}
&\frac{d}{dt} \left[e^{\sigma t} \left( \|\sqrt{\rho}\boldsymbol{u}_{t}\|_{L^2}^{2} + \|\sqrt{\rho}\theta _{t}\|_{L^2}^{2} \right)\right] + e^{\sigma t} \left (\underline{\mu} \| \nabla  \boldsymbol{u}_{t}\|_{L^2}^2 +\kappa \| \nabla \theta_{t}\|_{L^2}^{2}   \right ) \\
\le &  C e^{\sigma t}  \left ( \left \| \sqrt{\rho }  \boldsymbol{u}_{t} \right \| _{L^{2} }^{2} +  \left \|
\sqrt{\rho }  \theta _{t} \right \| _{L^{2} }^{2} \right ) \left ( \left \| \sqrt{\rho }
 \boldsymbol{u}_{t} \right \| _{L^{2} }^{2} + \left \| \sqrt{\rho }  \theta _{t} \right \| _{L^{2}
 }^{2} +\left \| \nabla \boldsymbol{u} \right \| _{L^{2} }^{2}+\left \| \nabla \theta  \right \| _{L^{2} }^{2}\right )\\
&+ e^{\sigma t} \left ( \left \| \sqrt{\rho } \boldsymbol{u}_{t} \right \| _{L^{2} }^{2} + \left \| \sqrt{\rho }  \theta _{t} \right \| _{L^{2}
 }^{2} +\left \| \nabla \boldsymbol{u} \right \| _{L^{2} }^{2}+\left \| \nabla \theta  \right \| _{L^{2} }^{2} \right )
+  \sigma e^{\sigma t} \left( \|\sqrt{\rho}\boldsymbol{u}_{t}\|_{L^2}^{2} + \|\sqrt{\rho}\theta _{t}\|_{L^2}^{2} \right) \\
\le &  C e^{\sigma t}  \left ( \left \| \sqrt{\rho }  \boldsymbol{u}_{t} \right \| _{L^{2} }^{2} +  \left \|
\sqrt{\rho }  \theta _{t} \right \| _{L^{2} }^{2} \right ) \left ( \left \| \sqrt{\rho }
 \boldsymbol{u}_{t} \right \| _{L^{2} }^{2} + \left \| \sqrt{\rho }  \theta _{t} \right \| _{L^{2}
 }^{2} +\left \| \nabla \boldsymbol{u} \right \| _{L^{2} }^{2}+\left \| \nabla \theta  \right \| _{L^{2} }^{2}\right )\\
&+ C e^{\sigma t} \left ( \left \| \sqrt{\rho } \boldsymbol{u}_{t} \right \| _{L^{2} }^{2} + \left \| \sqrt{\rho }  \theta _{t} \right \| _{L^{2}
 }^{2} +\left \| \nabla \boldsymbol{u} \right \| _{L^{2} }^{2}+\left \| \nabla \theta  \right \| _{L^{2} }^{2} \right ). \\
\end{aligned}
\end{equation}
Applying Gronwall's inequality together with \eqref{34}, \eqref{elementary}, \eqref{Exponential 1}, \eqref{GN3} and \eqref{GN5} gives \eqref{ex34}. Therefore, the proof of Lemma \ref{lem3} is completed.
\end{proof}

\begin{lemma} \label{lem4}
Under the conditions \eqref{small1} and \eqref{17}, there exists a positive constant $C$ depending only on $\Omega, q, \underline{\mu}, \bar{\mu}, \kappa, \bar{\rho}, \|\nabla \boldsymbol{u}_{0}\|_{L^{2}},  \| \nabla \theta _{0}\|_{L^{2}}$ and $\|\nabla \mu(\rho_0)\|_{L^q}$ such that
\begin{equation}\label{GN6}
\int_{0}^{T} \left \| \nabla \boldsymbol{u} \right \|_{L^{\infty}  } dt \le C \left(m_{0}^{\frac{1}{3}} + m_0^\frac{1}{24}\right).
\end{equation}
\end{lemma}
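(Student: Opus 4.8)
The plan is to convert the $L^\infty$ bound on $\nabla\boldsymbol{u}$ into an $L^r$ bound on the source of the momentum equation, viewed as a steady Stokes system, and then integrate in time. First I would recall that for $r\in(3,\min\{q,6\})$ the embedding $W^{2,r}(\Omega)\hookrightarrow C^1(\bar\Omega)$ gives $\|\nabla\boldsymbol{u}\|_{L^\infty}\le C\|\boldsymbol{u}\|_{W^{2,r}}$. Rewriting $\eqref{Benard1}_2$ as the problem \eqref{stokes} with $\mathbf{F}=\rho\theta\boldsymbol{e}_3-\rho\boldsymbol{u}_t-\rho\boldsymbol{u}\cdot\nabla\boldsymbol{u}$ and applying \eqref{s2}, the bootstrap hypothesis \eqref{17} makes the factor $1+\|\nabla\mu(\rho)\|_{L^q}^{q(5r-6)/2r(q-3)}$ bounded by a constant, so that $\|\boldsymbol{u}\|_{W^{2,r}}\le C\|\mathbf{F}\|_{L^r}$. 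Thus it suffices to bound $\int_0^T(\|\rho\theta\|_{L^r}+\|\rho\boldsymbol{u}_t\|_{L^r}+\|\rho\boldsymbol{u}\cdot\nabla\boldsymbol{u}\|_{L^r})\,dt$ by $C(m_0^{1/3}+m_0^{1/24})$.

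For each of the three terms I would interpolate the $L^r$ norm between $L^2$ and $L^6$, writing $\alpha=\tfrac{3(r-2)}{2r}\in(\tfrac12,1)$, and use $0\le\rho\le\bar{\rho}$ from \eqref{rhobound} together with Poincar\'e's and Sobolev's inequalities to pass to the weighted $L^2$ norms and gradients already controlled, schematically $\|\rho f\|_{L^r}\le C\|\sqrt\rho f\|_{L^2}^{1-\alpha}\|\nabla f\|_{L^2}^{\alpha}$. For the buoyancy term this gives $\|\rho\theta\|_{L^r}\le C\|\sqrt\rho\theta\|_{L^2}^{1-\alpha}\|\nabla\theta\|_{L^2}^{\alpha}$, and combining $\|\sqrt\rho\theta\|_{L^2}\le Cm_0^{1/3}$ from \eqref{elementary} with $\int_0^Te^{\sigma t}\|\nabla\theta\|_{L^2}^2\,dt\le Cm_0^{2/3}$ from \eqref{Exponential 1} (via H\"older in time, the exponential weight taming the infinite interval) produces exactly the power $m_0^{1/3}$. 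The convective term is handled similarly, estimating $\|\rho\boldsymbol{u}\cdot\nabla\boldsymbol{u}\|_{L^r}\le C\|\nabla\boldsymbol{u}\|_{L^2}^{3/2-\alpha}\|\nabla\boldsymbol{u}\|_{H^1}^{1/2+\alpha}$ and pairing the small quantity $\int_0^T\|\nabla\boldsymbol{u}\|_{L^2}^{4}\,dt\le 2m_0^{1/3}$ from \eqref{17} against $\int_0^T\|\nabla\boldsymbol{u}\|_{H^1}^2\,dt\le C$ from \eqref{ls02}.

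The hard part will be the material–derivative term $\int_0^T\|\rho\boldsymbol{u}_t\|_{L^r}\,dt\le C\int_0^T\|\sqrt\rho\boldsymbol{u}_t\|_{L^2}^{1-\alpha}\|\nabla\boldsymbol{u}_t\|_{L^2}^{\alpha}\,dt$, because in the absence of a compatibility condition $\sqrt\rho\boldsymbol{u}_t$ is only controlled through the singular weight $\|\sqrt\rho\boldsymbol{u}_t\|_{L^2}\lesssim t^{-1/2}$ near $t=0$. Here I would split $[0,T]=[0,\zeta(T)]\cup[\zeta(T),T]$ with $\zeta(T)=\min\{1,T\}$. On $[0,\zeta(T)]$ I would use a three–factor H\"older inequality, borrowing a small power $\|\sqrt\rho\boldsymbol{u}_t\|_{L^2}^{2\eta}$ with $0<\eta<\tfrac{1-\alpha}{2}$ to be matched against the small $\int_0^Tt\|\sqrt\rho\boldsymbol{u}_t\|_{L^2}^2\,dt\le Cm_0^{2/3}$ from \eqref{GN4}, the bound $\int_0^Tt\|\nabla\boldsymbol{u}_t\|_{L^2}^2\,dt\le C$ from \eqref{34}, and the pointwise bound $\|\sqrt\rho\boldsymbol{u}_t\|_{L^2}\le Ct^{-1/2}$ from \eqref{34}; the exponents conspire so that the residual time weight is exactly $t^{-p_3/2}$ with $p_3<2$, which is integrable on the bounded subinterval. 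On $[\zeta(T),T]$ the tail is already small since $\int_{\zeta(T)}^T\|\sqrt\rho\boldsymbol{u}_t\|_{L^2}^2\,dt\le\int_0^Tt\|\sqrt\rho\boldsymbol{u}_t\|_{L^2}^2\,dt\le Cm_0^{2/3}$, which I would combine with $\int_{\zeta(T)}^Te^{\sigma t}\|\nabla\boldsymbol{u}_t\|_{L^2}^2\,dt\le C$ from \eqref{ex34} so that the factor $e^{-\sigma\alpha t}$ controls the infinite interval. Optimizing $\eta$ and the interpolation exponent $\alpha$ (i.e.\ the choice of $r$) is what produces the weaker power $m_0^{1/24}$, and summing the three contributions yields \eqref{GN6}. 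The recurring obstacle is keeping every time integral convergent at $t=0$ while still extracting a strictly positive power of $m_0$, which is exactly why the singular weighted estimates of Lemmas \ref{lem2} and \ref{lem3} and their exponential counterparts are indispensable.
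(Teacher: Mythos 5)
Your skeleton is the same as the paper's: Stokes regularity from Lemma \ref{stokeseq} with the bootstrap hypothesis \eqref{17} absorbing the $\|\nabla\mu(\rho)\|_{L^q}$ factor, the embedding into $L^\infty$, and then a term-by-term bound on $\int_0^T\|\mathbf{F}\|_{L^r}\,dt$; your buoyancy estimate is essentially the paper's \eqref{63}. Where you genuinely diverge is the hardest term $\rho\boldsymbol{u}_t$. The paper extracts the entire power of $m_0$ from the \emph{density} factor: it writes $\|\rho\boldsymbol{u}_t\|_{L^r}\le C\|\sqrt{\rho}\|_{L^{12r/(6-r)}}\|\sqrt{\rho}\boldsymbol{u}_t\|_{L^{12r/(6+r)}}$ and interpolates $\|\rho_0\|_{L^{12r/(6-r)}}$ between $L^1$ and $L^\infty$ to produce the prefactor $m_0^{\frac12(\frac1r-\frac16)}$ — this is precisely where the exponent $\tfrac{1}{24}$ originates, via the restriction $3<r<\min\{q,4\}$ — while the time integrals in \eqref{60}--\eqref{61} are merely bounded, not small. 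You instead keep the density factor $O(1)$ and pull smallness out of the time-weighted dissipation $\int_0^T t\|\sqrt{\rho}\boldsymbol{u}_t\|_{L^2}^2\,dt\le Cm_0^{2/3}$ of \eqref{GN4} by borrowing a power $2\eta<1-\alpha$, with the residual $t^{-1/2}$ weight integrable near $t=0$ exactly when $\eta+\alpha/2<\tfrac12$, and with the tail $[\zeta(T),T]$ handled by \eqref{GN4} (since $t\ge1$ there) together with the exponential bound \eqref{ex34}. I checked your exponent bookkeeping for this term and it closes; it is a valid alternative mechanism — arguably more flexible, since it does not force $r<4$ — at the price that the final power $m_0^{2\eta/3}$ is fixed by the artificial parameter $\eta$ rather than by $r$, and one must invoke $m_0\le\varepsilon_0\le 1$ to dominate all occurring powers by $m_0^{1/24}+m_0^{1/3}$.

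There is, however, one step that fails as written: the convective term. With $\alpha=\tfrac{3(r-2)}{2r}\in(\tfrac12,1)$, pairing $\|\nabla\boldsymbol{u}\|_{L^2}^{3/2-\alpha}$ against $\int_0^T\|\nabla\boldsymbol{u}\|_{L^2}^4\,dt\le 2m_0^{1/3}$ and $\|\nabla\boldsymbol{u}\|_{H^1}^{1/2+\alpha}$ against $\int_0^T\|\nabla\boldsymbol{u}\|_{H^1}^2\,dt\le C$ uses H\"older exponents summing to $\tfrac{3/2-\alpha}{4}+\tfrac{1/2+\alpha}{2}=\tfrac{5/2+\alpha}{4}<1$, so the generalized H\"older inequality forces a third factor $\|1\|_{L^{p_3}(0,T)}=T^{(3/2-\alpha)/4}$, which is unbounded as $T\to\infty$ — fatal for a lemma whose constant must be $T$-independent. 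The repair is immediate with tools you already cite: pair instead with $\int_0^T\|\nabla\boldsymbol{u}\|_{L^2}^2\,dt\le Cm_0^{2/3}$ from \eqref{elementary}, whose exponents $\tfrac{3/2-\alpha}{2}+\tfrac{1/2+\alpha}{2}=1$ are exact and yield $Cm_0^{(3/2-\alpha)/3}$; this is effectively what the paper does in \eqref{0001}, where the split $\|\nabla\boldsymbol{u}\|_{L^2}^{1/2}\|\nabla\boldsymbol{u}\|_{H^1}^{3/2}$ is integrated with the exact exponents $\tfrac14+\tfrac34=1$, giving $Cm_0^{1/6}$.
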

\begin{proof}
For  $3 <r < \min \{q, 4\}$, we infer from Lemma \ref{stokeseq} that
\begin{equation}\label{54}
\left \| \nabla \boldsymbol{u}  \right \| _{W^{1,r} } 
\le C \left \| \rho \boldsymbol{u}_{t}  \right \|_{L^{r} }+C\left \| \rho \boldsymbol{u}
\cdot \nabla \boldsymbol{u} \right \|_{L^{r} }+ C\left \| \rho \theta \boldsymbol{e}_{3}\right \|_{L^{r} }.
\end{equation}
One deduces from H\"older's inequality and the Gagliardo-Nirenberg inequality that 
\begin{equation}\label{55}
\begin{aligned}
\left \| \rho \boldsymbol{u}_{t}  \right \|_{L^{r} }
&\le C\left \| \sqrt{\rho }  \right \|_{L^{\frac{12r}{6-r}}}
\left \|\sqrt{\rho }\boldsymbol{u}_{t} \right \| _{L^{\frac{12r}{6+r}}}\\
&\le C\left \| \rho_0 \right \|_{L^{\frac{12r}{6-r}}}^{\frac{1}{2}  }
\left \|\sqrt{\rho }\boldsymbol{u}_{t} \right \| _{L^{2}}^{\frac{6-r}{4r} }
\left \|\sqrt{\rho }\boldsymbol{u}_{t} \right \| _{L^{6}}^{\frac{5r-6}{4r} }\\
&\le C\left \| \rho_0  \right \|_{L^{1}}^{\frac{6-r}{12r}  }
\|\rho_0\|_{L^{\infty}}^{\frac{7r-6}{12r}  }
\left \|\sqrt{\rho }  \boldsymbol{u}_{t} \right \| _{L^{2}}^{\frac{6-r}{4r} }
\left \|\nabla \boldsymbol{u}_{t} \right \| _{L^{6}}^{\frac{5r-6}{4r} }\\
&\le C m_{0} ^{\frac{1}{2}\left ( \frac{1}{r} -\frac{1}{6}  \right )   }
\left \|\sqrt{\rho }  \boldsymbol{u}_{t} \right \| _{L^{2}}^{\frac{6-r}{4r} }
\left \|\nabla \boldsymbol{u}_{t} \right \| _{L^{6}}^{\frac{5r-6}{4r} }.
\end{aligned}
\end{equation}
If $0< T\le 1$, we derive from H\"older's inequality that
\begin{equation}\label{60}
\begin{aligned}
& \int_{0}^{T} \left \|\sqrt{\rho }  \boldsymbol{u}_{t} \right \| _{L^{2}}^{\frac{6-r}{4r} }
\left \|\nabla \boldsymbol{u}_{t} \right \| _{L^{6}}^{\frac{5r-6}{4r} }dt\\
\le &  \int_{0}^{T} \left ( t\left \|\sqrt{\rho }  \boldsymbol{u}_{t} \right \| _{L^{2}}^{2}  \right )
^{\frac{6-r}{8r} } \left ( t \left \|\nabla \boldsymbol{u}_{t} \right \| _{L^{2}}^{2} \right )
^{\frac{5r-6}{8r} } t^{-\frac{1}{2} }dt\\
\le & C \sup_{[0,T]} \left( t \left \|\sqrt{\rho}
\boldsymbol{u}_{t} \right \| _{L^{2}}^{2} \right ) ^{\frac{6-r}{8r} }
\left ( \int_{0}^{T} t\left \|\nabla   \boldsymbol{u}_{t} \right \| _{L^{2}}^{2}dt
\right )^{\frac{5r-6}{8r} }\left (\int_{0}^{T}  t^{-\frac{1}{2}\cdot \frac{8r}{6+3r}  }dt \right )
^{\frac{3r+6}{8r} } \\
\le & C.
\end{aligned}
\end{equation}
If $T>1$, due to $3 < r < \min\{4,q\},$ we deduce from H\"older's inequality that 
\begin{equation}\label{61}
\begin{aligned}
 & \int_{1}^{T} 
\left \|\sqrt{\rho }  \boldsymbol{u}_{t} \right \| _{L^{2}}^{\frac{6-r}{4r} }
\left \|\nabla \boldsymbol{u}_{t} \right \| _{L^{6}}^{\frac{5r-6}{4r} }dt\\
\le &  \int_{0}^{T}
\left ( t^{2} \left \|\sqrt{\rho }  \boldsymbol{u}_{t} \right \| _{L^{2}}^{2}  \right )
^{\frac{6-r}{8r} } \left ( t^{2}  \left \|\nabla \boldsymbol{u}_{t} \right \| _{L^{2}}^{2} \right )
^{\frac{5r-6}{8r} } t^{-1 }dt\\
\le & C 
\sup_{[0,T]} \left( t^{2}  \left \|\sqrt{\rho}
\boldsymbol{u}_{t} \right \| _{L^{2}}^{2}   \right ) ^{\frac{6-r}{8r} }
\left ( \int_{1}^{T} t^{2} \left \|\nabla   \boldsymbol{u}_{t} \right \| _{L^{2}}^{2}dt
\right )^{\frac{5r-6}{8r} }\left (\int_{1}^{T}  t^{-\frac{8r}{6+3r}  }dt \right )
^{\frac{3r+6}{8r} } \le C.
\end{aligned}
\end{equation}
Combining \eqref{60} with \eqref{61} yields that for any $T>0$,
\begin{equation}\label{62}
\begin{aligned}
\int_0^T \|\rho \boldsymbol{u}_t\|_{L^r} dt \le & C m_{0} ^{\frac{1}{2}\left ( \frac{1}{r} -\frac{1}{6}  \right )   }\int_0^T 
\left \|\sqrt{\rho }  \boldsymbol{u}_{t} \right \| _{L^{2}}^{\frac{6-r}{4r} }
\left \|\nabla \boldsymbol{u}_{t} \right \| _{L^{6}}^{\frac{5r-6}{4r} }dt \\
\le & C m_{0} ^{\frac{1}{2}\left ( \frac{1}{r} -\frac{1}{6}  \right )   } 
\le  C\left(m_0^{\frac{1}{12}}+ m_0^{\frac{1}{24}}\right).
\end{aligned}
\end{equation}
By H\"older's inequality, Gagliardo-Nirenberg inequality and \eqref{ls02}, we have
\begin{equation} \label{0001}
\begin{aligned}
\int_0^T \|\rho\boldsymbol{u}\cdot \nabla\boldsymbol{u}\|_{L^r} dt \le & C \int_0^T \|\rho\|_{L^{\frac{6r}{6-r}}}\|\boldsymbol{u}\|_{L^{\infty}}\|\nabla u\|_{L^6} dt \\
\le & C\int_0^T \|\nabla\boldsymbol{u}\|_{L^2}^{\frac{1}{2}} \|\nabla \boldsymbol{u}\|_{H^1}^{\frac{1}{2}}\|\nabla \boldsymbol{u}\|_{H^1} dt \\
\le & C\int_0^T \|\nabla\boldsymbol{u}\|_{L^2}^{\frac{1}{2}} \|\nabla \boldsymbol{u}\|_{H^1}^{\frac{3}{2}} dt \\
\le & C\left(\int_0^T \|\nabla \boldsymbol{u}\|_{L^2}^2 dt\right)^{\frac{1}{4}}\left(\int_0^T \|\nabla \boldsymbol{u}\|_{H^1}^2 dt\right)^{\frac{3}{4}} \\
\le  & Cm_0^{\frac{1}{6}}.
\end{aligned}
\end{equation}
Finally, due to \eqref{elementary}, \eqref{Exponential 1}, Poincar\'e's inequality and H\"oleder's inequality, one has
\begin{equation}\label{63}
\begin{aligned}
\int_{0}^{T} \left \| \rho \theta \boldsymbol{e}_{3}  \right \|_{L^{r} } dt & \le C \int_{0}^{T} \left \| \rho \theta  \right \|_{L^{4} } dt \le C \int_{0}^{T} \left \| \rho \theta  \right \|_{L^{2} }^{\frac{1}{4}} \left \| \rho \theta  \right \|_{L^{6} }^{\frac{3}{4}}dt \\ 
& \le C \int_{0}^{T} 
e^{\frac{1}{8}\sigma t}\|\sqrt{\rho}\theta\|_{L^2}^{\frac{1}{4}}\cdot  e^{-\frac{1}{8}\sigma t} \left \|\nabla \theta  \right \| _{L^{2} }dt \\
& \le \left(\sup_{[0,T]} e^{\sigma t} \|\sqrt{\rho}\theta\|_{L^2}^2\right)^{\frac{1}{8}}\int_0^T e^{-\frac{1}{8}\sigma t} \left \|\nabla \theta  \right \| _{L^{2} }^{\frac{3}{4}} dt \\
& \le \left(\sup_{[0,T]} e^{\sigma t} \|\sqrt{\rho}\theta\|_{L^2}^2\right)^{\frac{1}{8}}\left(\int_0^T e^{-\frac{1}{5}\sigma t}dt\right)^{\frac{5}{8}} \left(\int_0^T \left \|\nabla \theta  \right \| _{L^{2} }^{2} dt\right)^{\frac{3}{8}}  \\
&  \le C m_0^{\frac{1}{3}}.
\end{aligned}
\end{equation}
Substituting \eqref{62}-\eqref{63} into \eqref{54}, we deduce from Sobolev's inequality that
\begin{equation}\label{64}
\begin{aligned}
\int_{0}^{T} \left \| \nabla \boldsymbol{u}  \right \| _{L^{\infty } }dt
 \le C \int_{0}^{T} \left \| \nabla \boldsymbol{u}  \right \| _{W^{1,r} }dt \le C(m_{0}^{\frac{1}{24}} + m_0^{\frac{1}{3}}).
\end{aligned}
\end{equation}
This completes the proof of Lemma \ref{lem4}.
\end{proof}

\begin{proof}[Proof of Proposition \ref{prop1}] 
Now we are in a position to give a proof of Proposition \ref{prop1}. Taking the spatial gradient operator on \eqref{18} implies
\begin{equation} \label{ls003}
[\nabla\mu(\rho)]_t + \boldsymbol{u}\cdot \nabla^2 \mu(\rho) + \nabla \boldsymbol{u} \cdot \nabla \mu(\rho) =0.
\end{equation}
Multiplying \eqref{ls003} by $q|\nabla \mu(\rho)|^{q-2}\nabla \mu(\rho)~(q>1),$ and integrating the resulting equation over $\Omega$, we deduce from integrating by parts and H\"older's inequality that
\begin{equation} \label{002}
\frac{d}{dt} \left \| \nabla  \mu \left ( \rho  \right )  \right \| _{L^{q} }
\le   \left \| \nabla \boldsymbol {u} \right \| _{L^{\infty } }
\left \| \nabla  \mu \left ( \rho  \right )  \right \| _{L^{q} }.
\end{equation}
By Gronwall's inequality and \eqref{GN6}, we have
\begin{equation}\label{65}
\begin{aligned}
\left \| \nabla  \mu \left ( \rho  \right )  \right \| _{L^{q} }
&\le \left \| \nabla  \mu \left ( \rho_{0}   \right )  \right \| _{L^{q} }\cdot
\exp\left \{\int_{0}^{T} \left \| \nabla \boldsymbol {u} \right \| _{L^{\infty } }dt \right \}\\
&\le   \left \| \nabla  \mu \left ( \rho_{0}   \right )  \right \| _{L^{q} }\cdot
\exp \left \{C_{2} ( m_{0} ^{\frac{1}{24}} + m_0^{\frac{1}{3}}) \right\}
\end{aligned}
\end{equation}
for some $C_2$ depending only on $\Omega, q, \bar{\mu}, \underline{\mu}, \kappa, \bar{\rho}, \|\nabla \boldsymbol{u}\|_{L^2}, \|\nabla \theta\|_{L^2}$ and $\|\nabla \mu(\rho_0)\|_{L^q}$. This implies that
\begin{equation} \label{ls004}
\|\nabla \mu(\rho)\|_{L^q} \le 2 \|\nabla \mu(\rho_0)\|_{L^q},
\end{equation}
provided that 
$$ m_0 \le \varepsilon_1 \triangleq \min\left\{\frac{\underline{\mu}\kappa}{C_1^2 \bar{\rho}^{\frac{2}{3}}}, 1, \left(\frac{\log 2}{2C_2}\right)^{24}\right\}.$$
Furthermore, it follows from \eqref{elementary} and \eqref{GN3} that
\begin{equation}\label{67}
\begin{aligned}
\int_{0}^{T} \left \| \nabla \boldsymbol {u} \right \| _{L^{2} }^{4}dt
& \le \sup_{[ 0,T ]} \|\nabla \boldsymbol{u} \|_{L^{2}}^{2} \int_{0}^{T} \left \|
\nabla \boldsymbol {u} \right \| _{L^{2 } }^{2} dt  \le C_3 m_0^{\frac{2}{3}}
\end{aligned}
\end{equation}
for some $C_3$ depending only on $\Omega, q, \bar{\mu}, \underline{\mu}, \kappa, \bar{\rho}, \|\nabla \boldsymbol{u}\|_{L^2}, \|\nabla \theta\|_{L^2}$ and $\|\nabla \mu(\rho_0)\|_{L^q}$. This implies that
\begin{equation} \label{003}
\int_0^T \|\nabla \boldsymbol{u}\|_{L^2}^4 dt \le m_0^{\frac{1}{3}},
\end{equation}
provided that
$$ m_0 \le \varepsilon_2 \triangleq \min\left\{\frac{\underline{\mu}\kappa}{C_1^2 \bar{\rho}^{\frac{2}{3}}}, \left(\frac{1}{C_3}\right)^3\right\}. $$
Consequently, if 
$$ m_0 \le \varepsilon_0 \triangleq \min\{\varepsilon_1, \varepsilon_2\} = \min\left\{\frac{\underline{\mu}\kappa}{C_1^2 \bar{\rho}^{\frac{2}{3}}}, 1, \left(\frac{\log 2}{2C_2}\right)^{24}, \left(\frac{1}{C_3}\right)^3\right\},$$
then \eqref{ls004} and \eqref{003} lead to the desired \eqref{ls0005}.
Therefore the proof of Proposition \ref{prop1} is completed.
\end{proof}

\begin{lemma} \label{lem6}
Under the conditions \eqref{small1} and \eqref{17}, there exists a positive constant $C$ depending only on  $\Omega, q, \underline{\mu}, \bar{\mu}, \kappa, \bar{\rho}, \|\nabla \boldsymbol{u}_{0}\|_{L^{2}},  \| \nabla \theta _{0}\|_{L^{2}}$ and $\|\nabla \mu(\rho_0)\|_{L^q}$ such that
\begin{equation} \label{001}
\sup_{[0,T]} \left(\|\nabla \rho\|_{L^2} + \|\rho_t\|_{L^{\frac{3}{2}}}\right) \le C.
\end{equation}
\end{lemma}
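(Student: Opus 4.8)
The plan is to control $\|\nabla\rho\|_{L^2}$ by a Gronwall argument driven by the time-independent $L^1(0,T;L^\infty)$-bound on $\nabla\boldsymbol{u}$ already secured in Lemma \ref{lem4}, and then to recover the bound on $\|\rho_t\|_{L^{3/2}}$ directly from the transport equation $\eqref{Benard1}_1$.

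First I would apply the spatial gradient to $\eqref{Benard1}_1$ to obtain
\begin{equation}
\partial_t\nabla\rho + \boldsymbol{u}\cdot\nabla(\nabla\rho) + \nabla\boldsymbol{u}\cdot\nabla\rho = 0.
\end{equation}
Multiplying by $\nabla\rho$ and integrating over $\Omega$, the convective term $\int\boldsymbol{u}\cdot\nabla(\nabla\rho)\cdot\nabla\rho\,dx = \tfrac12\int\boldsymbol{u}\cdot\nabla|\nabla\rho|^2\,dx$ vanishes after integration by parts, since $\mathrm{div}\,\boldsymbol{u}=0$ and $\boldsymbol{u}=0$ on $\partial\Omega$. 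Bounding the remaining term by $\|\nabla\boldsymbol{u}\|_{L^\infty}\|\nabla\rho\|_{L^2}^2$ yields
\begin{equation}
\frac{d}{dt}\|\nabla\rho\|_{L^2} \le C\|\nabla\boldsymbol{u}\|_{L^\infty}\|\nabla\rho\|_{L^2}.
\end{equation}
Gronwall's inequality, together with $\rho_0\in H^1$ from \eqref{RC} and the estimate $\int_0^T\|\nabla\boldsymbol{u}\|_{L^\infty}\,dt \le C$ from \eqref{GN6}, then gives
\begin{equation}
\sup_{[0,T]}\|\nabla\rho\|_{L^2} \le \|\nabla\rho_0\|_{L^2}\exp\left\{\int_0^T\|\nabla\boldsymbol{u}\|_{L^\infty}\,dt\right\} \le C.
\end{equation}

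Finally, the bound on $\rho_t$ follows by reading the transport equation as $\rho_t = -\boldsymbol{u}\cdot\nabla\rho$ and applying H\"older's inequality with $\tfrac{2}{3} = \tfrac16 + \tfrac12$, namely
\begin{equation}
\|\rho_t\|_{L^{3/2}} = \|\boldsymbol{u}\cdot\nabla\rho\|_{L^{3/2}} \le \|\boldsymbol{u}\|_{L^6}\|\nabla\rho\|_{L^2} \le C\|\nabla\boldsymbol{u}\|_{L^2}\|\nabla\rho\|_{L^2},
\end{equation}
where the last step uses Poincar\'e's inequality and the Gagliardo-Nirenberg inequality \eqref{GN1}. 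Since $\|\nabla\boldsymbol{u}\|_{L^2}$ is bounded by \eqref{GN3} and $\|\nabla\rho\|_{L^2}$ has just been controlled, the proof is complete. The only genuinely delicate ingredient is the time-independent $L^1(0,T;L^\infty)$-estimate of $\nabla\boldsymbol{u}$; since this has already been established in Lemma \ref{lem4}, no further obstacle remains and the argument is otherwise routine.
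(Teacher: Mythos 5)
Your proposal is correct and follows essentially the same route as the paper: the paper's proof of this lemma simply invokes ``the same argument as \eqref{ls003}--\eqref{ls004}'' (the gradient-of-the-transport-equation estimate with Gronwall driven by the $L^1(0,T;L^\infty)$ bound \eqref{GN6} on $\nabla\boldsymbol{u}$), which is exactly what you spell out, and your H\"older/Sobolev treatment of $\|\rho_t\|_{L^{3/2}}$ via $\rho_t=-\boldsymbol{u}\cdot\nabla\rho$, \eqref{GN3} and the freshly obtained bound on $\|\nabla\rho\|_{L^2}$ matches the paper's step for step.
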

\begin{proof}
Using the same argument as \eqref{ls003}-\eqref{ls004} yields
\begin{equation} \label{003}
\sup_{[0,T]} \|\nabla \rho\|_{L^2} \le C.
\end{equation}
It follows from $\eqref{Benard1}_1$, H\"older's inequality and Sobolev's inequality that
$$ \|\rho_t \|_{L^{\frac{3}{2}}} = \|\boldsymbol{u}\cdot \nabla \rho \|_{L^{\frac{3}{2}}} \le \|\nabla \rho\|_{L^2} \|\boldsymbol{u}\|_{L^6} \le C \|\nabla \rho\|_{L^2}\|\nabla \boldsymbol{u}\|_{L^2},$$
which together with \eqref{003} and \eqref{GN3} yields that
$$ \sup_{[0, T]} \|\rho_t\|_{L^{\frac{3}{2}}} \le C.$$
This completes the proof of Lemma \ref{lem6}. 
\end{proof}

\begin{lemma} \label{lem7}
Under the conditions \eqref{small1} and \eqref{17}, there exists a positive constant $C$ depending only on  $\Omega, q, \underline{\mu}, \bar{\mu}, \kappa, \bar{\rho}, \|\nabla \boldsymbol{u}_{0}\|_{L^{2}},  \| \nabla \theta _{0}\|_{L^{2}}$ and $\|\nabla \mu(\rho_0)\|_{L^q}$ such that, for $3 < r< \min\{q,6\},$
\begin{equation} \label{005}
\sup_{[0,T]} \left[t\left(\|\boldsymbol{u}\|_{H^2}^2 + \|\nabla P\|_{H^1}^2 + \|\theta \|_{H^2}^2 \right)\right] + \int_0^T t \left(\|\boldsymbol{u}\|_{W^{2,r}}^2 + \|\nabla P\|_{W^{1,r}}^2 + \|\theta \|_{W^{2,r}}^2 \right)dt \le C.
\end{equation}
Moreover, for $\sigma$ as in Lemma \ref{lem1}, and $\zeta(t)$ as in Lemma \ref{lem3}, one has that
\begin{equation} \label{006}
\sup_{[\zeta(T),T]} \left[e^{\sigma t}\left(\|\boldsymbol{u}\|_{H^2}^2 + \|\nabla P\|_{H^1}^2 + \|\theta \|_{H^2}^2 \right)\right] \le C.
\end{equation}
\end{lemma}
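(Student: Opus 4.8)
The plan is to upgrade the first-order bounds of Lemmas \ref{lem2} and \ref{lem3} to second-order bounds by inserting the equations into the Stokes and elliptic regularity theory of Lemma \ref{stokeseq}, the time weights $t$, $t^{2}$ and $e^{\sigma t}$ being supplied by the $\boldsymbol{u}_t,\theta_t$ estimates \eqref{34} and \eqref{ex34}. For the pointwise part of \eqref{005}, I would start from \eqref{27} and \eqref{28}, which already control $\|\nabla\boldsymbol{u}\|_{H^1}+\|\nabla P\|_{L^2}$ and $\|\nabla\theta\|_{H^1}$ by $\|\sqrt{\rho}\boldsymbol{u}_t\|_{L^2}$, $\|\sqrt{\rho}\theta_t\|_{L^2}$ and lower-order terms. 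Since $\boldsymbol{u}=\theta=0$ on $\partial\Omega$, Poincar\'e's inequality turns these into $\|\boldsymbol{u}\|_{H^2}^2+\|\nabla P\|_{L^2}^2\le C(\|\sqrt{\rho}\boldsymbol{u}_t\|_{L^2}^2+\|\nabla\boldsymbol{u}\|_{L^2}^6+\|\sqrt{\rho}\theta\|_{L^2}^2)$ and $\|\theta\|_{H^2}^2\le C(\|\sqrt{\rho}\theta_t\|_{L^2}^2+\|\nabla\boldsymbol{u}\|_{L^2}^4\|\nabla\theta\|_{L^2}^2+\|\sqrt{\rho}\boldsymbol{u}\|_{L^2}^2)$. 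Multiplying by $t$ and using $\sup_{[0,T]}t\|\sqrt{\rho}\boldsymbol{u}_t\|_{L^2}^2\le C$, $\sup_{[0,T]}t\|\sqrt{\rho}\theta_t\|_{L^2}^2\le C$ from \eqref{34} with $i=1$, together with the uniform bounds $\|\nabla\boldsymbol{u}\|_{L^2}^2\le C$ and $\sup t\|\nabla\theta\|_{L^2}^2\le Cm_0^{2/3}$ of \eqref{GN3}--\eqref{GN4} and \eqref{elementary}, yields the supremum part of \eqref{005}, the pressure being carried along at the same order through Lemma \ref{stokeseq}.

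For the integrated $W^{2,r}$ part I would apply \eqref{s2} to $\eqref{Benard1}_2$ with $\mathbf{F}=-\rho\boldsymbol{u}_t-\rho\boldsymbol{u}\cdot\nabla\boldsymbol{u}+\rho\theta\boldsymbol{e}_3$ and the $W^{2,r}$ elliptic estimate to $\eqref{Benard1}_3$; because $\|\nabla\mu(\rho)\|_{L^q}$ is already bounded by \eqref{17}, this reduces matters to bounding $\int_0^T t\|\mathbf{F}\|_{L^r}^2\,dt$ for $3<r<\min\{q,6\}$. The delicate term is $\int_0^T t\|\rho\boldsymbol{u}_t\|_{L^r}^2\,dt$. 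Using the interpolation of \eqref{55}, $\|\rho\boldsymbol{u}_t\|_{L^r}\le Cm_0^{\frac12(\frac1r-\frac16)}\|\sqrt{\rho}\boldsymbol{u}_t\|_{L^2}^{\frac{6-r}{4r}}\|\nabla\boldsymbol{u}_t\|_{L^2}^{\frac{5r-6}{4r}}$, the integrand becomes homogeneous of degree two in $\boldsymbol{u}_t$ since $\frac{6-r}{4r}+\frac{5r-6}{4r}=1$, and I would split $[0,T]=[0,\zeta(T)]\cup[\zeta(T),T]$. On the bounded piece the $t$-weighted bounds of \eqref{34} suffice; on $[\zeta(T),T]$ I would insert $t^2$-weights, writing the integrand as $(t^2\|\sqrt{\rho}\boldsymbol{u}_t\|_{L^2}^2)^{\frac{6-r}{8r}}(t^2\|\nabla\boldsymbol{u}_t\|_{L^2}^2)^{\frac{5r-6}{8r}}t^{-1}$ and applying H\"older, so that $\sup t^2\|\sqrt{\rho}\boldsymbol{u}_t\|_{L^2}^2\le C$ and $\int t^2\|\nabla\boldsymbol{u}_t\|_{L^2}^2\,dt\le C$ from \eqref{34} with $i=2$ leave a residual $\int_{\zeta(T)}^T t^{-\alpha}\,dt$ with $\alpha=\frac{2}{2-b}>1$, $b=\frac{5r-6}{2r}\in(\tfrac32,2)$, which converges as $T\to\infty$ precisely because $r<6$. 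This $T$-uniformity, rather than any single estimate, is the \emph{main obstacle}, and it is exactly why both the $t$- and $t^2$-weighted bounds of Lemma \ref{lem3} were recorded. The remaining pieces $\int_0^T t\|\rho\boldsymbol{u}\cdot\nabla\boldsymbol{u}\|_{L^r}^2\,dt$ and $\int_0^T t\|\rho\theta\|_{L^r}^2\,dt$ are of lower order and are controlled by \eqref{GN3}, \eqref{ls02} and the exponential bound \eqref{Exponential 1}, with the temperature equation treated identically.

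Finally, for \eqref{006} I would repeat the pointwise argument of the first paragraph but multiply the $H^2$ inequalities by $e^{\sigma t}$ instead of $t$. On $[\zeta(T),T]$ the factors $e^{\sigma t}\|\sqrt{\rho}\boldsymbol{u}_t\|_{L^2}^2$ and $e^{\sigma t}\|\sqrt{\rho}\theta_t\|_{L^2}^2$ are bounded by \eqref{ex34}, while $e^{\sigma t}\|\nabla\boldsymbol{u}\|_{L^2}^2$, $e^{\sigma t}\|\nabla\theta\|_{L^2}^2$ and $e^{\sigma t}(\|\sqrt{\rho}\boldsymbol{u}\|_{L^2}^2+\|\sqrt{\rho}\theta\|_{L^2}^2)$ are bounded by \eqref{GN5} and \eqref{Exponential 1}. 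Since the worst nonlinear term splits as $e^{\sigma t}\|\nabla\boldsymbol{u}\|_{L^2}^6=(e^{\sigma t}\|\nabla\boldsymbol{u}\|_{L^2}^2)\,\|\nabla\boldsymbol{u}\|_{L^2}^4$ with a uniformly bounded factor $\|\nabla\boldsymbol{u}\|_{L^2}^4$, every term stays bounded, giving $\sup_{[\zeta(T),T]}e^{\sigma t}(\|\boldsymbol{u}\|_{H^2}^2+\|\nabla P\|_{H^1}^2+\|\theta\|_{H^2}^2)\le C$ and completing the proof.
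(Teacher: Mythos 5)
Your proposal is correct, and its pointwise parts (\eqref{005}-sup and \eqref{006}) coincide with the paper's proof: both run \eqref{27}--\eqref{28} (i.e.\ \eqref{ls01}/\eqref{007}) through the weights $t$ and $e^{\sigma t}$ and invoke \eqref{GN4}, \eqref{34}, \eqref{GN5}, \eqref{ex34}. Where you genuinely diverge is the term $\int_0^T t\|\rho\boldsymbol{u}_t\|_{L^r}^2\,dt$ in the integrated $W^{2,r}$ estimate. The paper exploits that this is an $L^2$-in-time quantity: interpolating $\|\sqrt{\rho}\boldsymbol{u}_t\|_{L^r}^2\le C\|\sqrt{\rho}\boldsymbol{u}_t\|_{L^2}^{\frac{6-r}{r}}\|\nabla\boldsymbol{u}_t\|_{L^2}^{\frac{3r-6}{r}}$, the exponents sum to exactly $2$, so Young's inequality gives $\le C\|\sqrt{\rho}\boldsymbol{u}_t\|_{L^2}^2+C\|\nabla\boldsymbol{u}_t\|_{L^2}^2$ and the single weight $t$ ($i=1$ in \eqref{34}, plus \eqref{GN4}) closes the estimate uniformly in $T$ with no splitting at all --- the $T$-uniformity you single out as the ``main obstacle'' is in fact automatic here, since \eqref{34} and \eqref{GN4} are themselves $T$-uniform. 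Your alternative transplants the technique the paper uses in Lemma \ref{lem4}: the degree-one interpolation \eqref{55} with the $m_0$ prefactor, the split $[0,\zeta(T)]\cup[\zeta(T),T]$, and the $t^2$-weighted H\"older giving the residual $\int_{\zeta(T)}^T t^{-\frac{4r}{6-r}}\,dt$ (your $\alpha=\frac{2}{2-b}=\frac{4r}{6-r}>1$ is computed correctly, and the bounded piece is handled by the $i=1$ weights since $\zeta(T)\le1$). That machinery is genuinely needed in Lemma \ref{lem4}, where the target is the $L^1$-in-time norm $\int_0^T\|\rho\boldsymbol{u}_t\|_{L^r}\,dt$ (degree one in $\boldsymbol{u}_t$, so Young is unavailable and one must trade a power of $t$ for integrability at infinity); here it works but is heavier than necessary, and the $m_0$-smallness factor it produces is superfluous for Lemma \ref{lem7}, whose conclusion is only a bound by $C$. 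Your handling of the convective term via $\sup_{[0,T]}t\|\nabla\boldsymbol{u}\|_{H^1}^2\le C$ (established first) together with \eqref{GN3}, \eqref{GN4} and \eqref{ls02}, and of $\rho\theta\boldsymbol{e}_3$ via \eqref{Exponential 1}, matches the paper's \eqref{009} in substance, so the proof goes through as written.
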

\begin{proof}
We obtain from \eqref{ls01} that
\begin{equation} \label{007}
\begin{aligned}
& \|\nabla \boldsymbol{u}\|_{H^1}^2 + \|\nabla P\|_{L^2}^2 + \|\nabla \theta\|_{H^1}^2 \\
\le & C\left(\|\sqrt{\rho}\boldsymbol{u}_t \|_{L^2}^2 + \|\sqrt{\rho}\theta_t \|_{L^2}^2 + \|\nabla \boldsymbol{u}\|_{L^2}^2 + \|\nabla \theta\|_{L^2}^2 \right),
\end{aligned}
\end{equation}
which together with \eqref{GN4} and \eqref{34} yields
\begin{equation} \label{008}
\sup_{[0,T]} \left[t\left(\|\boldsymbol{u}\|_{H^2}^2 + \|\nabla P\|_{H^1}^2 + \|\theta \|_{H^2}^2 \right)\right] \le C.
\end{equation}
Meanwhile, the desired \eqref{006} follows from \eqref{007}, \eqref{GN5} and \eqref{ex34}.

For $3<r<\min\{q, 6\},$ We get from Lemma \ref{stokeseq}, H\"older's inequality and Sobolev's inequality that
\begin{equation} \label{009}
\begin{aligned}
 \|\nabla \boldsymbol{u}\|_{W^{1,r}}^2 + \|\nabla P\|_{L^r}^2 
\le &  C\left(\|\sqrt{\rho}\boldsymbol{u}_t \|_{L^r}^2 + \|\rho \boldsymbol{u}\cdot \nabla \boldsymbol{u}\|_{L^r}^2 +  \|\sqrt{\rho}\theta \|_{L^r}^2 \right) \\
\le & C \|\sqrt{\rho}\boldsymbol{u}_t\|_{L^2}^{\frac{6-r}{r}} \|\nabla \boldsymbol{u}_t \|_{L^2}^{\frac{3r-6}{r}} + C\|\nabla \boldsymbol{u}\|_{L^2} \|\nabla \boldsymbol{u}\|_{H^1}^3 + C\|\nabla \theta\|_{L^2}^2 \\
\le & C \|\sqrt{\rho}\boldsymbol{u}_t\|_{L^2}^2 + C \|\nabla \boldsymbol{u}_t\|_{L^2}^2 + C \|\sqrt{\rho}\boldsymbol{u}_t\|_{L^2}^4 + C \|\nabla \theta\|_{L^2}^2,
\end{aligned} 
\end{equation}
which combined with \eqref{GN4}, \eqref{34} and \eqref{Exponential 1} leads to
$$ \int_0^T t\left(\|\nabla \boldsymbol{u}\|_{W^{1,r}}^2 + \|\nabla P\|_{L^r}^2\right) dt \le C.$$
Similarly, we derive that
$$ \int_0^T t \|\nabla \theta\|_{W^{1,r}}^2  dt \le C.$$
Hence, we complete the proof of Lemma \ref{lem7}.
\end{proof}

\section{Proof of Theorem \ref{thm}}
With all the a priori estimates obtained in Section 3 at hand, we are now in a position to prove Theorem \ref{thm}.

\begin{proof}[Proof of Theorem \ref{thm}]
First, by Lemma \ref{local}, there exists a $T_*>0$ such that the initial and boundary value problem \eqref{Benard}-\eqref{boundary} admits a unique local strong solution $(\rho, \boldsymbol{u}, \theta, P)$ on $\Omega \times (0, T_*].$ It follows from \eqref{RC} that there exists a $T_1 \in (0, T_*]$ such that \eqref{17} holds for $T=T_1$.

Next, set
\begin{equation} \label{401}
T^*_1 \triangleq \sup\{T| (\rho, \boldsymbol{u}, \theta, P)~\text{is~a~strong~solution~on}~\Omega \times (0,T]~\text{and}~\eqref{17}~\text{holds}\},
\end{equation}
and 
\begin{equation} \label{402}
T^* \triangleq \sup\{T| (\rho, \boldsymbol{u}, \theta, P)~\text{is~a~strong~solution~on}~\Omega \times (0,T]\}.
\end{equation}
Then $T^*_1 \ge T_1 >0.$ In particular, Proposition \ref{prop1} together with continuity argument, implies that \eqref{17} in fact holds on $(0, T^*).$ Thus
\begin{equation} \label{403}
T^*_1 = T^*,
\end{equation}
provided that $m_0 < \varepsilon_0$ as assumed.

Moreover, for any $0<\tau<T \le T^*$ with $T$ finite, one deduces from standard embedding that
\begin{equation} \label{404}
\theta \in L^{\infty} (\tau, T; H^2) \cap H^1(\tau,T; H^2) \hookrightarrow C([\tau, T]; H^2).
\end{equation}
Combining \eqref{34} and \eqref{005} gives for any $0<\tau<T \le T^*$,
\begin{equation} \label{404}
\nabla \boldsymbol{u}, P\in  C([\tau, T]; L^2) \cap C(\bar{\Omega} \times [\tau,T]),
\end{equation}
where one has used the standard embedding 
$$ L^{\infty}(\tau, T; H^1 \cap W^{1, r}) \cap H^1(\tau, T; L^2) \hookrightarrow C([\tau, T]; L^2) \cap C(\bar{\Omega} \times [\tau, T]).$$
Moreover, if follows from \eqref{17}, \eqref{rhobound}, \eqref{rholp}, \eqref{001} and \cite{Lions}[Lemma 2.3] that
\begin{equation} \label{405}
\rho \in C([0,T]; H^1),~~ \nabla \mu(\rho) \in C([0,T]; L^q).
\end{equation}
Thanks to \eqref{GN5} and \eqref{005}, the standard arguments yield that
\begin{equation} \label{4060}
\rho \boldsymbol{u}_t \in H^1(\tau, T;L^2) \hookrightarrow C([\tau,T]; L^2),
\end{equation}
which together with \eqref{404} and \eqref{405} gives
\begin{equation} \label{406}
\rho \boldsymbol{u}_t  + \rho\boldsymbol{u}\cdot\nabla \boldsymbol{u} -\rho\theta \boldsymbol{e}_3 \in  C([\tau,T]; L^2).
\end{equation}
Since $(\rho, \boldsymbol{u})$ satisfies \eqref{stokes} with $\mathbf{F}= -\rho \boldsymbol{u}_t  - \rho\boldsymbol{u}\cdot\nabla \boldsymbol{u} + \rho\theta \boldsymbol{e}_3$, we deduce from $\eqref{Benard1}_2$, \eqref{404}, \eqref{405}, \eqref{406} and \eqref{005} that
\begin{equation} \label{407}
\nabla \boldsymbol{u}, P \in C([\tau, T]; H^1 \cap W^{1,r}).
\end{equation}

Now, we claim that
\begin{equation} \label{411}
T^*= \infty.
\end{equation}
Otherwise, $T^* < \infty.$ Proposition \ref{prop1} implies that \eqref{ls0005} holds at $T=T^*$. It follows from \eqref{001}, \eqref{ls004} and \eqref{GN3} that
$$ (\rho^*, \boldsymbol{u}^*, \theta^*)(x) \triangleq (\rho, \boldsymbol{u}, \theta)(x, T^*) = \lim\limits_{t \to T^*} (\rho, \boldsymbol{u}, \theta)(x,t)$$
satisfies
$$\rho^* \in H^1,~~\nabla \mu(\rho^*) \in L^q, ~~\boldsymbol{u}^*, \theta^* \in H^1_0.$$
Therefore, one can take $(\rho^*, \boldsymbol{u}^*, \theta^*)$ as the initial data and apply Lemma \ref{local} to extend the local strong solution beyond $T^*$. This contradicts the assumption of $T^*$ in \eqref{402}. Hence, $T^*=\infty.$ We thus complete the proof of Theorem \ref{thm} since exponential decay of solution \eqref{exp0} follows directly from \eqref{006} and \eqref{ex34}.

\end{proof}

\section*{Acknowledgements}
This work was supported by National Natural Science Foundation of China (Nos. 12001495, 12371246).



\begin{thebibliography}{99}

 \bibitem{Galdi}
G. P. Galdi, M. Padula; A new approach to energy theory in the stability of fluid motion. \textit{Arch. Rational Mech. Anal.} 110 (1990), no. 3, 187-286.
\bibitem{Giga}
M. Giga, Y. Giga, J. Saal; Nonlinear partial differential equations. Asymptotic behavior of solutions and self-similar solutions. Progress in Nonlinear Differential Equations and their Applications, 79. Boston, MA, 2010. xviii+294 pp.
\bibitem{HWang}
X. Huang, Y. Wang; Global strong solution of 3D inhomogeneous Navier-Stokes equations with density-dependent viscosity. \textit{J. Differential Equations} 259 (2015), no. 4, 1606-1627.
\bibitem{Li}
H. Li; Global well-posedness to the Cauchy problem of 2D nonhomogeneous B\'enard system with large initial data and vacuum. \textit{J. Math. Phys.} 63 (2022), no. 11, Paper No. 111506, 13 pp.
\bibitem{LiT}
X. Li, Z. Tan; Global well-posedness for the 2D micropolar Bénard convection system with mixed partial viscosity. \textit{J. Math. Anal. Appl.} 516 (2022), no. 1, Paper No. 126495, 32 pp.
\bibitem{Lions}
P. L. Lions; Mathematical topics in fluid mechanics. Vol. 1. Incompressible models. Oxford Lecture Series in Mathematics and its Applications, 3. Oxford Science Publications. The Clarendon Press, Oxford University Press, New York, 1996. xiv+237 pp.
\bibitem{LiuM}
M. Liu, Y. Li; Global strong solution and exponential decay to the 3D incompressible Bénard system with density-dependent viscosity and vacuum. \textit{Dyn. Partial Differ. Equ.} 20 (2023), no. 2, 117-133.
\bibitem{LiuY}
Y. Liu; Global existence and exponential decay of strong solutions for the 3D incompressible MHD equations with density-dependent viscosity coefficient.
\textit{Z. Angew. Math. Phys.} 70 (2019), no. 4, Paper No. 107, 18 pp.
\bibitem{LSong}
B. L\"u, S. Song; On local strong solutions to the three-dimensional nonhomogeneous Navier-Stokes equations with density-dependent viscosity and vacuum. \textit{Nonlinear Anal. Real World Appl.} 46 (2019), 58-81.
\bibitem{Majda}
A. Majda. \textit{Introduction to PDEs and Waves for the Atmosphere and Ocean.} Provindence (RI):Amer. Math. Soc. 2003.
\bibitem{Nirenberg}
L. Nirenberg, On elliptic partial differential equations. \textit{Ann. Scuola Norm. Sup. Pisa Cl. Sci.} 13(3):115–162, 1959.
\bibitem{Robinowitz}
P. Rabinowitz, Existence and nonuniqueness of rectangular solutions of the Bénard problem. \textit{Arch. Rational Mech. Anal.} 29 (1968), 32-57.
\bibitem{Song}
S. Song; On local strong solutions to the three-dimensional nonhomogeneous incompressible magnetohydrodynamic equations with density-dependent viscosity and vacuum. \textit{Z. Angew. Math. Phys.} 69 (2018), no. 2, Paper No. 23, 27 pp.
\bibitem{Struwe}
M. Struwe; Variational Methods. Applications to nonlinear partial differential equations and Hamiltonian systems, 4th edition. Springer, Berlin, 2008.
\bibitem{Wu}
G. Wu, L. Xue; Global well-posedness for the 2D inviscid Bénard system with fractional diffusivity and Yudovich's type data. \textit{J. Differential Equations} 253 (2012), no. 1, 100-125.
\bibitem{Ye}
Z. Ye. Regularity criterion of the 2D B\'enard equations with critical and supercritical dissipation. \textit{Nonlinear Anal.} 156: 111-143, 2017. 
\bibitem{Yu}
H. Yu, P. Zhang; Global strong solutions to the incompressible Navier-Stokes equations with density-dependent viscosity. \textit{J. Math. Anal. Appl.} 444 (2016), no. 1, 690-699.
\bibitem{ZhangQ}
Q. Zhang. Global well-posedness for the 2$\frac{1}{2}$D B\'enard system with partial viscosity terms. \textit{Appl. Math. Comput.} 283: 282-289, 2016.
\bibitem{ZhangR}
R. Zhang, M. Fan and S. Li. Global well-posedness of incompressible B\'enard problem with zero dissipation or zero thermal diffusivity. \textit{Appl. Math. Comput.} 321: 442-449, 2018.
\bibitem{Zhong1}
X. Zhong, Local strong solutions to the nonhomogeneous B\'enard system with nonnegative density. \textit{Rocky Mountain J. Math.} 50(4): 1497-1516, 2020.
\bibitem{Zhong2}
X. Zhong, Global strong solution of nonhomogeneous B\'enard system with large initial data and vacuum in a bounded domain. \textit{Z. Anal. Anwend.} 40(2): 153-166, 2021.
\bibitem{Zhong3}
X. Zhong, Global strong solution and exponential decay to the 3D Cauchy problem of nonhomogeneous B\'enard system with vacuum. \textit{Acta Appl. Math.} 172(8):16 pp, 2021. 
\end{thebibliography}
\end{document}